\newtheorem{theorem}{Theorem}[section]
\newtheorem{lemma}[theorem]{Lemma}
\newtheorem{proposition}[theorem]{Proposition}
\theoremstyle{definition}
\theoremstyle{remark}
\newtheorem{remark}[theorem]{Remark}
\theoremstyle{example}
\DeclareFontFamily{OML}{rsfs}{\skewchar\font'177}
\DeclareFontShape{OML}{rsfs}{m}{n}{ <5> <6> rsfs5 <7> <8> <9>
rsfs7 <10> <10.95> <12> <14.4> <17.28> <20.74> <24.88> rsfs10 }{}
\DeclareMathAlphabet{\mathfs}{OML}{rsfs}{m}{n}
\newcommand{\BH}{{\mathbb{H}}}
\newcommand{\BR}{{\mathbb{R}}}
\newcommand{\BZ}{{\mathbb{Z}}}
\newcommand{\CB}{{\mathcal{B}}}
\newcommand{\CD}{{\mathcal{D}}}
\newcommand{\CE}{{\mathcal{E}}}
\newcommand{\prob}{{\bf P}}
\newcommand{\p}{{\bf P}}
\newcommand{\e}{{\bf E}}
\newcommand{\bae}{\begin{equation}\begin{aligned}}
\newcommand{\eae}{\end{aligned}\end{equation}}
\newcommand{\ev}{\mathbf{E}}
\newcommand{\pr}{\mathbb{P}}
\newcommand{\htr}[1]{{\color{black}{{h(T(#1))}}}}
\DeclareMathOperator*{\esssup}{ess\,sup}
\begin{document}

\title[Stationary Eden Model on groups]{Stationary Eden Model on groups}
\author{Ton\'ci Antunovi\'c}
\address[Ton\'ci Antunovi\'c]{University of California Los-Angeles}
\urladdr{http://www.math.ucla.edu/$\sim$tantunovic}
\email{tantunovic@math.ucla.edu}
\author{Eviatar B. Procaccia}
\address[Eviatar B. Procaccia\footnote{Research supported by NSF grant 1407558}]{University of California Los-Angeles}
\urladdr{www.math.ucla.edu/$\sim$procaccia}
\email{procaccia@math.ucla.edu}

\maketitle


\begin{abstract}
We consider two stationary versions of the Eden model, on the upper half planar lattice, resulting in an infinite forest covering the half plane. Under weak assumptions on the weight distribution and by relying on ergodic theorems, we prove that almost surely all trees are finite. 
Using the mass transport principle, we generalize the result to Eden model in graphs of the form $G\times\BZ_+$, where $G$ is a Cayley graph.
This generalizes certain known results on the two-type Richardson model, in particular of Deijfen and H{\"a}ggstr{\"o}m in 2007 \cite{deijfen2007two}.
\end{abstract}

\numberwithin{equation}{section} 

\section{Introduction}
Aggregation processes form one of the richest class of processes in statistical physics, admitting examples of KPZ relations, fractal geometry and surprising scaling limits. Though easy to define, many of the aggregation processes defy rigorous analysis. Itai Benjamini suggested to study a stationary version of known aggregation processes. The idea is to let aggregation processes grow from an infinite base graph, instead from a single point, and result in an infinite forest rooted at the base graph. The stationary versions share local behavior with the known processes, giving us a new approach to study them.   
A first attempt was made by Berger, Kagan and Procaccia \cite{berger2012stretch}, where a stationary version of internal diffusion limited aggregation (SIDLA) was studied on the upper half planar lattice. The general philosophy of the project is to use the additional symmetries given by stationarity to obtain local behavior of aggregation processes.

The Eden model was defined by Murray Eden in 1961 \cite{eden}. Consider the lattice $\BZ^d$ with the set of edges $\CE$. The Eden Model is commonly defined as a stochastic process with the state space  $\{0,1\}^\CE$, supported on finite nearest neighbor connected sets. For $1\le i\le d$, let $\pr[A_1=\pm e_i]=(2d)^{-1}$, where $e_i$ are the standard lattice coordinate directions. Conditioned on $A_n$, let $\partial A_n$ be the edge boundary of $A_n$, and let $\pr[A_{n+1}=A_n\cup \{e\}]=|\partial A_n|^{-1},$ for every $e\in\partial A_n$.

In Lawler, Bramson and Griffeath 1992, \cite{idla} it is claimed that computer simulations suggest that the Eden model does not converge to a Euclidean ball (proved for dimension greater than $10^6$ in Kesten 1986 \cite[Corollary 8.4]{kesten1986aspects}). It seems that even though the Eden model appears to be the simplest aggregation process, it holds surprising geometric properties.


In this paper we study a version of the Eden model on the graphs of the form $G \times \mathbb{Z}_+$, where $G$ is a Cayley graph of an infinitely countable, finitely generated group.  
For simplicity, we begin the discussion with the simplest case $G = \mathbb{Z}$, when $G \times \mathbb{Z}_+$ corresponds to either the upper part or the upper-right part of the square planar lattice.
The proof of this case contains the key ideas of the general case, while avoiding technical encumbrances. 

We consider two variants of the model. 
For the simplest case $G= \mathbb{Z}$, let $\overline{\BH}=\BZ\times\BZ_+$ to be the upper half planar lattice with nearest neighbor edges $\overline{\CE} $, and let $\widehat{\BH}$ be the half planar directed lattice $\widehat{\BH}=\{(x,y):x,y \in \BZ, x+y\in2\BZ,y\ge0\}$, with directed edges $\widehat{\CE}=\{(x+\theta_l,x),(x+\theta_r,x):x\in\widehat{\BH}\}$, where $\theta_l=(-1,1)$ and $\theta_r=(1,1)$. 
In other words, the edges are directed towards the vertex which decreases the sum of coordinates.

In the general case, we will take $G$ to be a Cayley graph of finitely generated group with a finite generating set.
In either the directed or undirected case we will consider graphs whose set of vertices is $G \times \mathbb{Z}_+$. 
In the directed case we consider the oriented graph $\widehat{G}$ with the vertex set $G \times \mathbb{Z}_+$, and in which vertices $(x,m)$ and $(y,n)$ are connected by an edge if and only if $|m-n|=1$ and $x$ and $y$ are neighbors in $G$.
This is also known as the tensor product of the graphs $G$ and $\mathbb{Z}_+$.
Edge connecting $(x,n)$ and $(y,n+1)$ is given the orientation from $(y,n+1)$ to $(x,n)$. 
In the undirected case, we consider the unoriented graph $\overline{G}$ with the vertex set $G \times \mathbb{Z}_+$, in which vertices $(x,m)$ and $(y,n)$ are connected by an edge if either 
\begin{itemize}
\item $x$ and $y$ are neighbors in $G$ and $m = n \geq 1$, or
\item $x = y$ and $m,n \geq 0$ with $|m-n| =1$.
\end{itemize}
This is equivalent to taking a Cartesian product of graphs $G \times \mathbb{Z}^+$ and removing the edges (but not the vertices) of $G \times \{0\}$.

Note that if we make all the edges in $\widehat{G}$ unoriented, the graph $\widehat{G}$ will be connected if and only if $G$ is not bipartite. 
If $G$ is bipartite it will actually consists of two disjoint connected components, isomorphic through the mapping $(x,n) \mapsto (zx,n)$ for any fixed generator $z \in G$.
In particular, taking $G = \mathbb{Z}$, the graph  $\widehat{\mathbb{H}}$ is actually one of the  components of the graph $\widehat{G}$, while $\overline{\mathbb{H}}$  agrees with the constructed $\overline{G}$.

The edge sets of $\widehat{G}$ and $\overline{G}$ will be denoted by $\widehat{\mathcal{E}}$ and $\overline{\mathcal{E}}$, respectively.
In the general case, will use symbols $x,y,z \dots$ to denote the vertices of the graph $G$ (that is the elements of the group $\mathcal{G}$), while $\overline{x}, \overline{y}, \overline{z} \dots$ will denote the vertices of $\overline{G}$, and $\widehat{x}, \widehat{y}, \widehat{z} \dots$ will denote the vertices of $\widehat{G}$.
Similarly, while $e$ and $\gamma$ will denote an edge and a path of $G$, $\overline{e}$, $\overline{\gamma}$  will denote edges and paths of $\overline{G}$, and $\widehat{e}$, $\widehat{\gamma}$  will denote (directed) edges and paths of $\widehat{G}$.
Note that we will consider paths as either sequences of adjacent vertices or sequences of adjacent  edges, as we find convenient.
For pairs of vertices $(x,y)$, $(\overline{x}, \overline{y})$ and $(\widehat{x}, \widehat{y})$ in the corresponding graphs, we let 
$\Gamma(x,y)$, $\overline{\Gamma}(\overline{x},\overline{y})$ and $\widehat{\Gamma}(\widehat{x},\widehat{y})$ denote the sets of paths connecting the corresponding vertices.
Note that paths in $\widehat{\Gamma}(\widehat{x},\widehat{y})$ are directed, and so $\widehat{\Gamma}(\widehat{x},\widehat{y}) = \emptyset$ is possible.

Note that the graph $G$ naturally embeds into both $\overline{G}$ and $\widehat{G}$ as $x \mapsto (x,0)$, and thus vertices $(x,0)$  will be denoted simply by $x$. 
We will denote the image $\{(x,0): x \in G\}$ as $G$.
Analogously, we will denote $\partial \widehat{\BH} = (2\mathbb{Z}) \times \{0\}$ and $\partial \overline{\BH} = (\mathbb{Z}) \times \{0\}$.

Let $\mu$ be a non-atomic measure supported on $[0,\infty)$ with finite expectation $\int_{[0,\infty)}x\mu(dx)<\infty$. 
To edges of $\widehat{G}$ and $\overline{G}$ (that is $\widehat{\BH}$ and $\overline{\BH}$ in the one-dimensional case) we will assign i.i.d.~ random variables $(\omega(\widehat{e}))_{\widehat{e} \in \widehat{\mathcal{E}}}$ and $(\omega(\overline{e}))_{\overline{e} \in \overline{\mathcal{E}}}$ with the distribution $\mu$.
The corresponding product measures over all edges is denoted by $\overline{\prob} = \mu^{\overline{\CE}}$ for the graph $\overline{G}$, and $\widehat{\prob} = \mu^{\widehat{\CE}}$ for the graph $\widehat{G}$. 

For every path $\overline{\gamma}$ in $\overline{G}$ and every directed path $\widehat{\gamma}$ in $\widehat{G}$ consider the passage times \[\lambda(\overline{\gamma}) = \sum_{\overline{e} \in \overline{\gamma}} \omega(\overline{e})  \ \text{ and } \  \lambda(\widehat{\gamma}) = \sum_{\widehat{e} \in \widehat{\gamma}} \omega(\widehat{e}),\] 
and the passage times between pairs of vertices $(\overline{x}, \overline{y})$ (or $(\widehat{x}, \widehat{y})$) 
\[
\overline{d}_\omega(\overline{x},\overline{y})=\inf_{\overline{\gamma}\in\overline{\Gamma}(\overline{x},\overline{y})}\lambda(\overline{\gamma})\ \text{ and } \ 
\widehat{d}_\omega(\widehat{x},\widehat{y})=\inf_{\widehat{\gamma}\in\widehat{\Gamma}(\widehat{x},\widehat{y})}\lambda(\widehat{\gamma}).
\]
Here $\widehat{d}_\omega(\widehat{x},\widehat{y}) = \infty$ if $\widehat{\Gamma}(\widehat{x},\widehat{y}) = \emptyset$.
For a set of vertices $A$ we define the point-to-set passage times $\overline{d}_\omega(\overline{x},A)=\inf_{\overline{y}\in A}\overline{d}_\omega(\overline{x},\overline{y})$ and  $\widehat{d}_\omega(\widehat{x},A)=\inf_{\widehat{y}\in A}\widehat{d}_\omega(\widehat{x},\widehat{y})$.
We will focus our interest on the passage times to $A = G$ and the geometry of the corresponding geodesics.
Note that since $\mu$ has no atoms for every pair of points $(\widehat{x}, \widehat{y})$, path $\widehat{\gamma}$ which achieves $\lambda(\widehat{\gamma}) = \widehat{d}_\omega(\widehat{x},\widehat{y})$ is unique, and this is also true for paths achieving $\widehat{d}_\omega(\widehat{x},A)$.
For every $x \in G$ and $t > 0$ define 
\[
\widehat{T}(x,t)=\bigcup_{\widehat{y}\in\widehat{G}} \{\widehat{\gamma}:\widehat{\gamma}\in\widehat{\Gamma}(\widehat{y},x),\lambda(\widehat{\gamma})=\widehat{d}_\omega(\widehat{y},G)<t\},
\]
as the union of all $\widehat{d}_\omega$ geodesics which end at  $x \in G$.
Similarly, we define $\overline{T}(x,t)$.
By the above discussion, for any fixed $t>0$ sets $(\widehat{T}(x,t))_{x \in G}$ (and similarly $(\overline{T}(x,t))_{x \in G}$) are disjoint trees.
We also consider the complete geodesic forest
\[
\widehat{T}(x)=\bigcup_{t > 0} \widehat{T}(x,t) = \bigcup_{\widehat{y}\in\widehat{G}} \{\widehat{\gamma}:\widehat{\gamma}\in\widehat{\Gamma}(\widehat{y},x),\lambda(\widehat{\gamma})=\widehat{d}_\omega(\widehat{y},G)\},
\]
and similarly $\overline{T}(x)$.

See Figure \ref{fig:directeden} for the visualization of the tree $\widehat{T}(0)$ in the directed one-dimensional case.
As discussed, for $G = \mathbb{Z}$ and more generally when $G$ is bipartite, the trees $\widehat{T}(x)$ spread through one of the two isomorphic components of the graph $\widehat{G}$.
We only draw one such component in Figure \ref{fig:directeden}.

\begin{figure}
\centering
\includegraphics[width=0.8\textwidth]{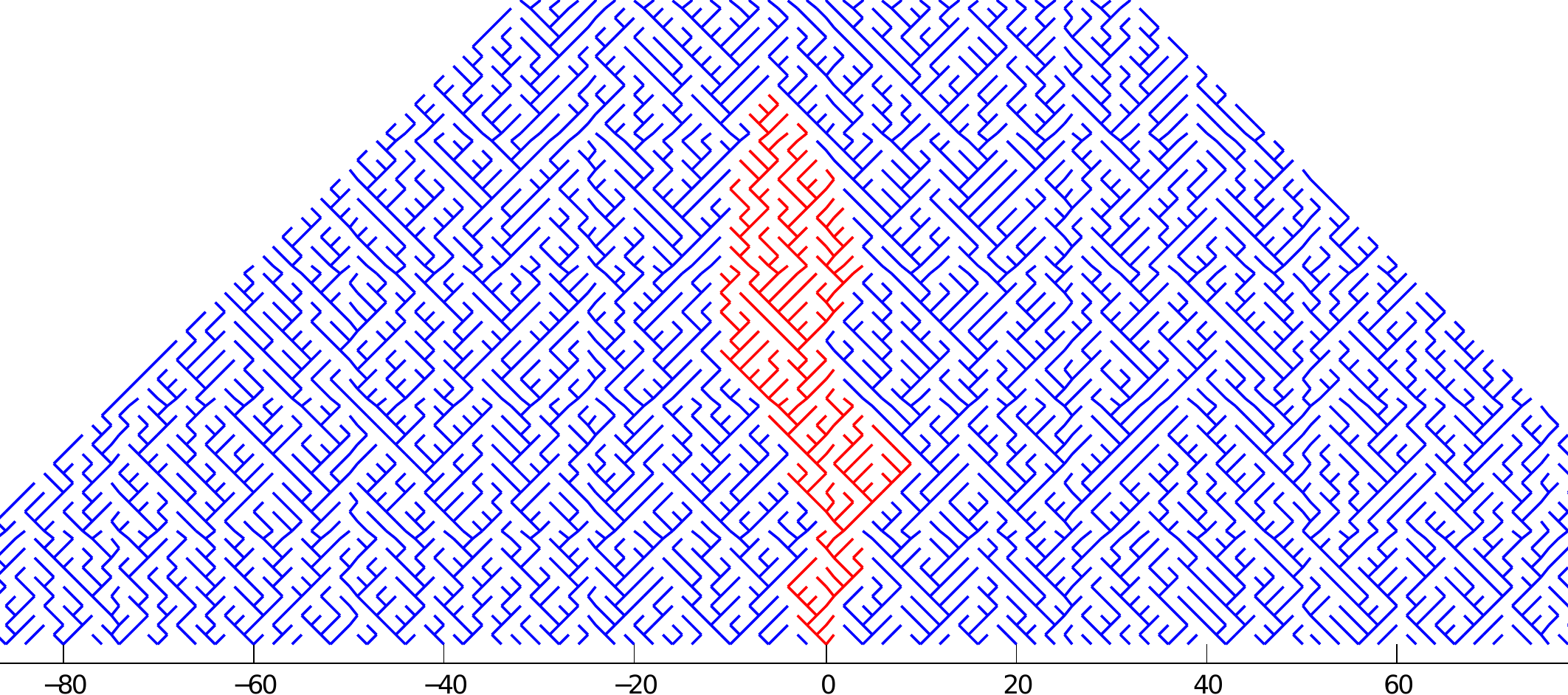}\label{fig:directeden}
\caption{Simulation of the Stationary Eden process on $\widehat{\BH}$.\label{fig:eden}}
\end{figure}

In the special case where $\mu$ is the distribution of an exponential random variable we obtain a stationary versions of the Eden model, see Figure \ref{fig:eden} for a graphical representation. By the memoryless property of the exponential distribution one can readily see that if we observe some tree $\overline{T}(x,t)$ ($\widehat{T}(x,t)$), at time $t$, the next edge the tree will attempt to add is uniform over the boundary of the tree. If at the time of attempt the end of the edge is not occupied by any tree, the edge will be added. If it is occupied the edge will not be  added. This is equivalent to the first passage percolation representation of the standard Eden model, where one considers all the geodesics emanating from the origin. This coupling was first considered by Richardson \cite{richardson1973random}, and was used by Kesten \cite{kesten1986aspects} to prove that the asymptotic shape of the Eden model in high dimension is not the Euclidean ball. 

The main result of this paper is that under the two defined measures all trees are finite almost surely. 
\begin{theorem}\label{thm:main}
For every non-atomic measure $\mu$, supported on $[0,\infty)$ with finite mean we have for any $x \in G$
\[\widehat{\prob}[|\widehat{T}(x)|<\infty]=1 \ \text{ and } \ \overline{\prob}[|\overline{T}(x)|<\infty]=1 .\]
\end{theorem}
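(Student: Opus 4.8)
The plan is to prove the statement first for the one-dimensional directed case (the graph $\widehat{\BH}$), where the key mechanism is cleanest, then transfer to the undirected case and finally to general Cayley graphs via the mass transport principle. Fix $x = 0 \in G$. The idea is to show that the tree $\widehat{T}(0)$, restricted to any horizontal slab $G \times \{n\}$, typically occupies only a bounded density of vertices, and in fact its ``width'' at level $n$ is tight; combined with a $0$-$1$ type argument this forces $|\widehat{T}(0)| < \infty$ almost surely. Concretely, for the directed model on $\widehat{\BH}$ I would study the random set $W_n = \{ y : (y,n) \in \widehat{T}(0,\infty) \}$, i.e.\ the set of columns through which geodesics to the origin pass at height $n$. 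By stationarity of $\om$ under horizontal shifts, the family of trees $(\widehat{T}(x))_{x \in \partial\widehat{\BH}}$ forms a translation-invariant partition of $\widehat{\BH}$ into geodesic trees. The ergodic theorem applied to the indicator that a given vertex $(y,n)$ is the ``root-column ancestor'' of $0$ should give that the expected number of columns at height $n$ belonging to $\widehat{T}(0)$ is exactly $1$ for every $n$ (each vertex at height $n$ belongs to exactly one tree, and by invariance the expected size of the slice of a fixed tree is the reciprocal of the density of roots, which is $1$). Hence $\e[|W_n|] = 1$ for all $n$, which by itself does not give finiteness, so the real work is to upgrade this first-moment identity.

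The upgrade I would pursue is monotonicity plus a ``no bi-infinite geodesic'' argument. In the directed setting a geodesic from $(y,n)$ to $0$ is decreasing in height, so $\widehat{T}(0)$ is infinite if and only if for infinitely many $n$ there is some $y$ with a geodesic from $(y,n)$ to $0$. I would argue that if $\widehat{T}(0)$ were infinite with positive probability, then by translation invariance and Fatou/ergodicity a positive density of roots $x \in \partial\widehat{\BH}$ would have infinite trees, and by a standard exchange-of-limits argument one could extract a bi-infinite path of edges that are geodesic-edges from both ``sides,'' or more precisely two disjoint infinite geodesic rays emanating into the slab that must cross because the plane is two-dimensional — giving a contradiction with uniqueness of geodesics (which holds since $\mu$ is non-atomic). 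The cleanest route is probably: consider the leftmost and rightmost columns $L_n = \min W_n$, $R_n = \max W_n$ of the tree $\widehat{T}(0)$; show that $(L_n)$ and $(R_n)$ are each random walks with stationary increments (again by horizontal stationarity of $\om$), that by planarity two distinct trees' boundary processes cannot cross, and that $\e[|W_n|] = 1$ forces $\e[R_n - L_n]$ bounded; then a recurrence/tightness argument shows $R_n = L_n$ (the tree dies) for infinitely many $n$ with probability $1$, whence $|\widehat{T}(0)| < \infty$.

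For the undirected case on $\overline{\BH}$ the same skeleton applies but geodesics are no longer monotone in height, so one must first establish that geodesics to $G$ do not wander arbitrarily far up: here I would use $\int x\, \mu(dx) < \infty$ to get, via the subadditive/shape machinery or a direct first-moment bound, that $\overline{d}_\om((y,n), G)$ grows linearly in $n$, so geodesics achieving $\overline{d}_\om(\cdot, G)$ reach a bounded height, reducing to a slab of finite (random, integrable) width and putting us back in the planar situation above. Finally, for a general Cayley graph $G$, horizontal stationarity is replaced by the transitive $G$-action, and I would replace the ergodic-theorem counting by the mass transport principle: send unit mass from each vertex $(y,n)$ to the root of the tree containing it; mass conservation gives $\e[\,|\widehat{T}(o) \cap (G \times \{n\})|\,] = 1$, and the planarity argument is replaced by an argument that an infinite geodesic tree would force a transport scheme violating the MTP or would produce two disjoint infinite geodesics with a shared descendant, contradicting uniqueness. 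The main obstacle I anticipate is exactly this last step: upgrading the first-moment identity $\e[|W_n|]=1$ to almost-sure finiteness without planarity — in $G \times \BZ_+$ for general $G$ one cannot use a crossing argument, so one needs a genuinely probabilistic input, most likely a combination of the MTP with an ergodic-theoretic dichotomy showing that ``infinitely many trees of infinite size'' is incompatible with each slab being partitioned into trees of mean size $1$.
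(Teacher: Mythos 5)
Your first step --- using horizontal stationarity (or, for general $G$, the mass transport principle) to get $\e[|\widehat{T}(0)\cap G_n|]=1$ --- is exactly the paper's Lemma \ref{lemma:ergodic} / Lemma \ref{lem:expwidthn}, so that part is on target. The gap is in the ``upgrade.'' The identity $\e[|W_n|]=1$ together with tightness of the widths is \emph{consistent} with every tree being infinite: a configuration in which each tree is a single infinite column has $|W_n|=1$ for all $n$ and violates nothing you have established. Relatedly, your proposed stopping criterion is wrong: $R_n=L_n$ means the level-$n$ slice is a single vertex, not that the tree has died; a tree can have width one at infinitely many levels and still be infinite. The planarity/crossing idea also does not close the gap: an infinite tree contains an infinite geodesic \emph{ray}, not a bi-infinite geodesic, and two disjoint infinite trees in the half-plane need not produce crossing geodesics, so there is no contradiction with uniqueness of geodesics. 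Finally, even granting stationary increments for $L_n$ and $R_n$ (which is in the horizontal, not the vertical, direction), a bounded mean for $R_n-L_n$ gives no recurrence to zero. You correctly flag at the end that you do not know how to rule out ``each slab partitioned into trees of mean size one, all infinite'' --- but that is precisely the case that must be excluded, and no amount of first-moment or crossing reasoning does it.

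The missing ingredient is a genuinely local, positive-probability \emph{killing} mechanism, which is what the paper supplies. On the event that the tree survives to level $n$ with $1\le|\widehat{T}^n(0)|\le M$ (which has probability comparable to the survival probability, by the first-moment bound), one conditions on $\mathfs{F}_n$ and forces, with conditional probability bounded below uniformly in $n$, a specific weight configuration in a bounded region above level $n$: heavy edges ($>\xi_2$) filling a pentagon/cylinder over the level-$n$ set of the tree, and light edges ($<\xi_1$) along its lateral and top boundary. A comparison of passage times then shows that any vertex $N$ levels higher whose geodesic passes through $\widehat{T}^n(0)$ would instead prefer the cheap detour along the boundary into a \emph{neighboring} tree, contradicting membership in $\widehat{T}(0)$; hence $\widehat{T}^{n+N}(0)=\emptyset$. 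Lemma \ref{lemma:killing_argument} then turns this into a recursion showing the survival probability decreases by a fixed amount along a sequence of levels, which is absurd. (In the directed case an extra layer is needed --- Proposition \ref{prop:estimates_on_minima}, Lemma \ref{lemma:1dfpp} and the martingale argument of Lemma \ref{lemma:martingale_argument} --- to guarantee that the geodesic through the tree's level-$n$ minimizer does not hug one of the diagonal directions, so that comparable neighbors $y^l,y^r$ outside the tree actually exist.) Without something playing the role of this surgery step, your argument cannot distinguish the true situation from the all-trees-infinite scenario.
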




In Deijfen and H{\"a}ggstr{\"o}m 2007 \cite[Theorem 1.1]{deijfen2007two} the undirected case of $G=\BZ^d$ with exponential weights was proved. 
In our proof we aspired greater generality for future applications in other stationary models.  

It is an interesting problem to describe the geometry of the trees more precisely.
In the following we prove that the maximal level size of any tree has infinite expectation and prove the same for certain moments of the tree heights.

For a set $S \subset \widehat{G}$ ($S \subset \overline{G}$) by $h(S)$ denote the height of $S$, that is
\[
h(S) = \max\{j: \exists x \in G, (x,j) \in S\},
\]
and by $w_n(S)$ and $w(S)$ denote the $n$-th and the maximal level size of $S$, that is
\[
w_n(S) = |S \cap G_n| \ \text{ and } w(S) = \max_{n \geq 0}w_n(S),
\]
where $G_n = G \times \{n\}$ represents the $n$-th level.
Note that when $G = \mathbb{Z}$ each level of $S$ is connected, so $w(S)$ can be interpreted as the maximal width of $S$, hence the notation.

\begin{theorem}
	\label{thm:finite_widhts}
        Assuming that $G$ is a Cayley graph of a finitely generated group, and that the distribution $\mu$ of $\omega(e)$ is supported on $[0,\infty)$ and has no atoms, the expected maximal level size has infinite expectation \[\widehat{\e}[w(\widehat{T}(x))] = \overline{\e}[w(\overline{T}(x))] = \infty.\]
\end{theorem}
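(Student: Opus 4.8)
The plan is to show that the maximal level size $w(\widehat T(x))$ (respectively $w(\overline T(x))$) stochastically dominates a random variable with infinite mean, by exhibiting many vertices at a single level that all belong to the same tree rooted at $x$. Since we already know from Theorem \ref{thm:main} that the tree is almost surely finite, the issue is purely about the tail of its width. Fix the root $x \in G$ and consider, for each $n \geq 1$, the level $G_n$. I would estimate from below the probability $p_n := \widehat{\prob}[w_n(\widehat T(x)) \geq 1]$, i.e.\ the probability that the tree $\widehat T(x)$ reaches level $n$ at all. By the translation invariance of $\widehat{\prob}$ under the $G$-action and the fact that the trees $(\widehat T(z))_{z \in G}$ partition the vertex set, every vertex $(y,n)$ lies in exactly one tree, so $\sum_{z \in G}\widehat{\prob}[(y,n) \in \widehat T(z)] = 1$; by invariance this gives no immediate bound unless $G$ is amenable, so instead I would argue directly about a single fixed vertex.

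The key step is the following "single-edge escape" estimate: the probability that the unique geodesic from $(x,1)$ to $G$ goes straight down the vertical edge to $x$ (in the undirected case), equivalently that $\omega$ of that edge is smaller than the passage time of every competing route, is bounded below by a constant $c > 0$ independent of everything — and more importantly, conditioned on this, one can iterate. More useful is to bound $p_n$ from below by a polynomially-decaying quantity. Here is the mechanism I expect to use: consider the $n$ edges forming the vertical path from $(x,n)$ down to $x$ in $\overline G$ (or a zig-zag path of length $2n$ in $\widehat G$). On the event that each of these edges has weight at most $\epsilon$ while the $O(n)$ edges emanating sideways from this path into the rest of the graph all have weight at least $\delta$, with $\delta$ chosen so that $\delta$ exceeds the typical first-passage distance saved by detouring — a comparison-of-geodesics argument — the entire vertical path becomes the geodesic from $(x,n)$ to $G$, forcing $(x,n) \in \overline T(x)$. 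The probability of this event is at least $\mu([0,\epsilon])^{cn}\,\mu([\delta,\infty))^{cn}$, which decays only exponentially in $n$; combined with a second moment / Paley–Zygmund type argument on the number of level-$n$ vertices in the tree this is not yet enough. The sharper route is: condition on the tree reaching height $n$; then by a FKG-type or direct resampling argument the conditional expected width at the level where the maximum is attained grows, and $\e[w] \geq \sum_n \widehat{\prob}[h(\widehat T(x)) = n]\cdot \e[w \mid h = n]$; if $\e[w\mid h=n] \gtrsim n$ and $\widehat{\prob}[h(\widehat T(x)) \geq n] \gtrsim 1/n$ we are done since $\sum 1/n = \infty$.

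Concretely I would prove two lemmas. First, a \emph{height tail} lower bound $\widehat{\prob}[h(\widehat T(x)) \geq n] \geq c/n$: this should follow from the renewal/ergodic structure already exploited in the proof of Theorem \ref{thm:main} — the sequence of "regeneration levels" of the geodesic tree has finite mean inter-arrival, hence the height of the excursion containing the root has a $\sim 1/n$ survival probability by the standard fact that a mean-zero-drift (or positive-recurrent) walk's excursion lengths are heavy only in this borderline way; alternatively, reflect the argument that makes the tree finite and read off that it is \emph{barely} finite. Second, a \emph{width given height} bound: conditioned on $\{h(\widehat T(x)) = n\}$, the tree must contain a path of length $n$, and along this path, by resampling the weights of edges hanging off it and using translation invariance, each such internal vertex independently "sprouts" a positive-probability sideways child; hence $\e[w \mid h = n] \geq c n$. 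Multiplying and summing over $n$ yields $\e[w] = \infty$. The identity $\widehat{\e}[w(\widehat T(x))] = \overline{\e}[w(\overline T(x))]$ is not literally an equality of the two models but a statement that both are infinite, proved by the same scheme applied to each graph.

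The main obstacle I anticipate is the \emph{height tail} lower bound $\widehat{\prob}[h \geq n] \geq c/n$: proving that the geodesic tree is not merely finite but has a polynomially heavy height tail requires genuinely quantitative control, and the ergodic-theorem argument that proves Theorem \ref{thm:main} is qualitative. One likely needs an explicit coupling with a random walk (or a branching-process-with-killing) whose excursion/extinction-time distribution is exactly on the $1/n$ borderline, which may force a more hands-on analysis of the competition interface between $\widehat T(x)$ and its neighbors than anything in the proof of Theorem \ref{thm:main}. If that bound proves too delicate, a fallback is to prove only $\widehat{\prob}[h \geq n]\geq \exp(-o(n))$ together with $\e[w\mid h = n]\geq e^{cn}$ via the escape-event estimate above, which also closes the argument since $\sum_n \exp(-o(n))\exp(cn) = \infty$ once the sprouting is made to beat the height cost — but controlling that trade-off is exactly where the real work lies.
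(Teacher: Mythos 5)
Your proposal does not close the argument: both of its load-bearing lemmas --- the height-tail lower bound $\widehat{\prob}[h(\widehat T(x))\geq n]\gtrsim 1/n$ and the conditional width bound $\e[w\mid h=n]\gtrsim n$ --- are left unproven, and you correctly identify them as the ``real work.'' Neither follows from anything established in the paper; the finiteness proof (Theorem \ref{thm:main}) is purely qualitative and gives no polynomial lower bound on survival probabilities, and the ``sprouting'' argument for the conditional width would require a resampling/FKG step that is not set up anywhere. The fallback trade-off $\exp(-o(n))\cdot e^{cn}$ is likewise unsubstantiated. So as written this is a research plan with a genuine gap, not a proof.

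The irony is that you walked right past the one observation that makes the theorem nearly trivial. You note that the trees $(\widehat T(z))_{z\in G}$ partition the vertex set, so $\sum_{z}\widehat{\prob}[(y,n)\in\widehat T(z)]=1$, but then discard it on the grounds that turning this into a bound on a \emph{single} tree ``requires amenability.'' It does not: the mass transport principle (Theorem \ref{thm:mass_transport}, applied exactly as in Lemma \ref{lem:expwidthn} with $m(y,x)=\mathbf{1}_{\{(y,n)\in T(x)\}}$) converts the fact that each vertex of $G_n$ lies in exactly one tree into the exact identity $\e[w_n(T(x))]=\e[|T(x)\cap G_n|]=1$ for every $n$, on any Cayley graph. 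From there the paper's proof is three lines: by Theorem \ref{thm:main} the tree is a.s.\ finite, so $w_n(T(x))\to 0$ a.s.; if $\e[w(T(x))]<\infty$ held, then since $w_n\leq w$ dominated convergence would force $\e[w_n(T(x))]\to 0$, contradicting $\e[w_n(T(x))]=1$. No tail asymptotics, no conditioning on the height, no second-moment method. You should replace your entire two-lemma scheme with this uniform-integrability contradiction.
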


\begin{theorem}\label{thm:finite_heights}
\begin{itemize}
\item[i)]
  Assuming that $G$ is a Cayley graph of a finitely generated group, and that the distribution $\mu$ of $\omega(e)$ is supported on $[0,\infty)$ and has no atoms, we have \[\widehat{\e}[\phi_G(h(\widehat{T}(x)))] =  \infty,\] where $\phi_G(n)$ is the size of the graph-distance ball of radius $n$ in $G$.
In particular, for $G = \mathbb{Z}$ we have that $\widehat{\e}[h(\widehat{T}(x))] =  \infty$.
\item[ii)]
For the undirected case, for $G = \mathbb{Z}^d$ and assuming additionally that 
 \bae\label{eq:kestenreq}
 \int e^{\nu x} \mu(dx)<\infty,\quad\text{for some $\nu>0$,}
 \eae  
we have that $\overline{\e}[h(\overline{T}(x))^d] =  \infty$.
\end{itemize}
\end{theorem}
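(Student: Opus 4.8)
The plan is to obtain lower bounds on the tail of $h(\widehat T(x))$ (resp. $h(\overline T(x))$) by exhibiting, with not-too-small probability, deep geodesics forced to terminate at a prescribed root, and then to sum the resulting tail. For part (i), fix a vertex $\widehat y = (z, n)$ at level $n$ in $\widehat G$. I would estimate $\widehat\prob[\,\widehat y \in \widehat T(x)\,]$ from below: this is the event that the unique $\widehat d_\omega$-geodesic from $\widehat y$ to $G$ terminates at $x$. Choose one directed path $\widehat\gamma_0$ from $\widehat y$ down to $x$ of length $n$. On the event that every edge of $\widehat\gamma_0$ has small weight (at most $\delta$ each, probability $\mu([0,\delta])^n$ by independence) and every edge not on $\widehat\gamma_0$ but incident to it, or incident to the competing vertices at level $0$ reachable within comparable cost, is large, the geodesic from $\widehat y$ will follow $\widehat\gamma_0$. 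More robustly: $\widehat d_\omega(\widehat y, G) \le \lambda(\widehat\gamma_0)$, and for the geodesic to land at $x$ it suffices that the path through $\widehat\gamma_0$ beats all paths to the other roots; since from level $n$ the number of roots reachable by a competitive path is at most polynomial in $n$ — indeed bounded by $\phi_G(Cn)$ for a constant $C$ depending on $\delta$ and $\mu$ — a union bound and i.i.d.\ comparison gives $\widehat\prob[\widehat y \in \widehat T(x)] \ge c^n / \phi_G(Cn)$ for suitable $c \in (0,1)$, or with more care a bound of the form $\ge \psi(n)/\phi_G(Cn)$. Then $\widehat\e[\phi_G(h(\widehat T(x)))] \ge \sum_n (\phi_G(n) - \phi_G(n-1)) \,\widehat\prob[h(\widehat T(x)) \ge n]$, and since $h(\widehat T(x)) \ge n$ whenever some vertex at level $n$ lies in $\widehat T(x)$, summing over the $\phi_G(n)-\phi_G(n-1)$-ish vertices at level $n$ (or just over a single well-chosen one and using that $w(\widehat T(x))$ is already known to have infinite expectation) should make the series diverge. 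The cleanest route may be to combine with Theorem \ref{thm:finite_widhts}: a tree of large width must reach a level whose size is large, and in the directed case reaching level $n$ at all forces height $\ge n$, so divergence of $\widehat\e[w]$ feeds divergence of $\widehat\e[\phi_G(h)]$.

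For part (ii), the undirected $G = \BZ^d$ case with the exponential-moment assumption \eqref{eq:kestenreq}, the idea is the same in spirit but one needs a genuinely $d$-dimensional estimate because now the competing roots form a $(d-1)$-dimensional slab $\BZ^{d-1}\times\{0\}$ growing like $n^{d-1}$, and one wants $\overline\e[h(\overline T(x))^d] = \sum_n (n^d - (n-1)^d)\,\overline\prob[h(\overline T(x)) \ge n] = \infty$, i.e.\ a lower bound $\overline\prob[h(\overline T(x)) \ge n] \gtrsim n^{-d}$ up to logarithmic corrections would suffice, but actually even $\overline\prob[h \ge n] \ge c\, n^{-(d-1)} \cdot (\text{slowly decaying})$ is enough since $n^d - (n-1)^d \asymp n^{d-1}$. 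Here I would invoke the shape theorem / Kesten-type large deviation estimates for first-passage percolation on $\BZ^d$ (available precisely under \eqref{eq:kestenreq}): the ball $\{\overline d_\omega(\cdot, x) \le t\}$ grows linearly, so a vertex $(z,n)$ with $\|z\| \le \epsilon n$ and level $n$ is at passage-time distance $\asymp n$ from $x$, while its distance to the rest of $G$ is also $\asymp n$; the event that $(z,n) \in \overline T(x)$ is then a competition between $O(n^{d-1})$ roots on the base, and symmetry plus FKG-type inequalities give $\overline\prob[(z,n)\in\overline T(x)] \ge c/n^{d-1}$ for $z$ directly above $x$. Summing the level-$n$ contribution: the probability that \emph{some} vertex of $G_n$ lies in $\overline T(x)$ is at least the probability for the single vertex above $x$, giving $\overline\prob[h(\overline T(x)) \ge n] \ge c/n^{d-1}$, whence $\sum_n n^{d-1} \cdot c/n^{d-1} = \infty$.

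The main obstacle is the second step in each part: getting a \emph{quantitatively correct} lower bound on $\overline\prob[(z,n) \in \overline T(x)]$ (resp.\ $\widehat\prob$) that does not decay too fast. A naive "all edges of one path are small, all others large" bound decays exponentially in $n$, which is useless against the polynomial count of competing roots in the undirected case and only barely survives in the directed case; one must instead allow the geodesic genuine freedom and control the competition with a soft argument — a union bound over roots combined with translation invariance, or an FKG/mass-transport argument showing that, averaged over which root a deep vertex attaches to, the "fair share" of the root above $x$ is $\gtrsim 1/(\text{number of plausible roots})$. Making the set of "plausible competing roots" have the right polynomial size ($\phi_G(Cn)$ in general, $n^{d-1}$ for $\BZ^d$) requires precisely the linear growth of passage-time balls, which is why \eqref{eq:kestenreq} enters in part (ii) and why in part (i) the answer is naturally phrased through $\phi_G$. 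A secondary technical point is handling the non-atomicity to ensure uniqueness of geodesics (already noted in the text) and measurability of the events $\{\widehat y \in \widehat T(x)\}$, both of which are routine given the setup.
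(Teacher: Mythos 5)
Your part (i) ends up, after a detour, at exactly the paper's argument: since every vertex of $\widehat T(x)$ at level $n$ is joined to $x$ by a directed path of length $n$, each level of $\widehat T(x)$ is contained in the $G$-ball of radius $h(\widehat T(x))$ around $x$, so $w(\widehat T(x))\le \phi_G(h(\widehat T(x)))$ and Theorem \ref{thm:finite_widhts} finishes the proof. The preliminary plan of forcing one cheap path and making everything else expensive is a dead end (it decays exponentially in $n$), as you concede; the one-line comparison with $w$ is the whole proof, and you should lead with it.

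Part (ii) is where you diverge from the paper, and as written there is a genuine gap. The paper argues by contradiction: if $\overline{\e}[h(\overline T(x))^d]<\infty$ then $\sum_{x\in\BZ^d}\overline{\p}[h(\overline T(x))>|x|/2]<\infty$, Borel--Cantelli makes all but finitely many trees short relative to $|x|$, and Kesten's speed theorem \cite{kesten1993speed} (this is where \eqref{eq:kestenreq} enters) then confines each such tree to a Euclidean ball of radius $\tfrac{2}{3}|x|$ about its root, so the trees fail to cover $\overline{G}$ --- a contradiction. You instead assert a direct tail bound $\overline{\p}[h(\overline T(x))\ge n]\ge c/n^{d-1}$ via ``symmetry plus FKG.'' Two problems. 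First, the count of competitors is wrong: for base $G=\BZ^d$ the roots within reach of a vertex at height $n$ fill a $d$-dimensional ball of radius $O(n)$ in $G$, i.e.\ $O(n^d)$ roots, not $O(n^{d-1})$; you appear to be treating the ambient graph as $\BZ^d$ rather than $\BZ^d\times\BZ_+$. Second, symmetry and FKG do not deliver the ``fair share'' claim: symmetry gives equalities among reflected roots, not a lower bound on the probability of the single root directly below. The correct soft implementation of your idea is the mass-transport identity $\overline{\e}[|\overline T(x)\cap G_n|]=1$ (Lemma \ref{lem:expwidthn}): writing $1=\sum_y\overline{\p}[(y,n)\in\overline T(0)]$ and using Kesten's estimates to show the contribution from $|y|>Cn$ is $o(1)$, some $y$ with $|y|\le Cn$ satisfies $\overline{\p}[(y,n)\in\overline T(0)]\ge cn^{-d}$, hence $\overline{\p}[h\ge n]\ge cn^{-d}$ and $\sum_n n^{d-1}\cdot n^{-d}=\infty$. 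So your strategy is salvageable --- and the weaker exponent $n^{-d}$ already suffices --- but the bound you actually wrote is both misstated and unproved.
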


Note that the total volumes of trees $|\widehat{T}(x)|$ and $|\overline{T}(x)|$ are at least as large as the maximal widths $w(\widehat{T}(x))$ and $w(\overline{T}(x))$, and so the expected volumes of trees are infinite as well. 
It would certainly be interesting to provide precise asymptotics for the maximal widths and heights. 
This seems to be rather difficult, even in dimension one.


Finally we show that the stationary Eden model converges asymptotically to a line, which is of interest due to the fact that the asymptotic shape of the Eden model is still unknown.
For a set $A \subset \overline{\BH}$, define the inner vertex boundary $\partial^{in}A$ as the set of all vertices of $A$ which have a neighbor in $A^c$.
\begin{theorem}\label{thm:shape}  
Suppose that $\mu$ is a non-atomic probability measure on $[0,\infty)$, which also satisfies \eqref{eq:kestenreq}.
Let $C_{\mathbf{d},t}$ denote the event that for all $(i,k)\in\left(\partial^{in}\bigcup_{x\in\BZ}\overline{T}(x,t)\right)$ such that $-t\le i\le t$, we have $\mathbf{d}-2t^{-0.1}\le\frac{k}{t}\le\mathbf{d}+2t^{-0.1}$ i.e.
\[
C_{\mathbf{d},t} = \left\{\left(\partial^{in}\bigcup_{x\in\BZ}\overline{T}(x,t)\right)\cap\left([-t,t]\times\BZ_+\right)\subset [-t,t]\times t[\mathbf{d}-2t^{-0.1},\mathbf{d}+2t^{-0.1}]\right\}.
\]
There exist constants $\mathbf{d}>0$ and $c>0$ such that for all $t>0$, we have  $\overline{\p}[C_{\mathbf{d},t}] \geq 1-e^{-ct^{1/5}}$. 
\end{theorem}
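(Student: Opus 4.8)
The plan is to show that, with overwhelming probability, the boundary of the occupied region $\bigcup_{x\in\BZ}\overline{T}(x,t)$ inside the strip $[-t,t]\times\BZ_+$ is squeezed into a thin horizontal slab of height $\mathbf{d}t \pm O(t^{0.9})$. The natural choice is $\mathbf{d} = 1/\mu_*$, where $\mu_*$ is the time constant of the undirected first-passage percolation with weights $\mu$ restricted to vertical lines, i.e.\ $\mu_* = \lim_n \overline{d}_\omega((i,n),G)/n$ along a single column; more precisely one should take $\mathbf{d}$ to be the reciprocal of the relevant directional time constant for $\overline{d}_\omega(\cdot,G)$. The key observation is that the occupied set at time $t$ is exactly $\{\overline{z} : \overline{d}_\omega(\overline{z},G) < t\}$, so a vertex $(i,k)$ lies on $\partial^{in}\bigcup_x \overline{T}(x,t)$ iff $\overline{d}_\omega((i,k),G) < t$ and some neighbor $(i',k')$ has $\overline{d}_\omega((i',k'),G) \geq t$. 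Thus $C_{\mathbf{d},t}$ is implied by the two-sided estimate
\[
\overline{d}_\omega((i,k),G) < t \implies k \leq (\mathbf{d}+2t^{-0.1})t,
\qquad
\overline{d}_\omega((i,k),G) \geq t \text{ for some neighbor} \implies k \geq (\mathbf{d}-2t^{-0.1})t,
\]
uniformly over $|i|\le t$. Hence it suffices to prove: (a) an upper bound, $\overline{d}_\omega((i,k),G) \le (\mathbf{d}^{-1}+o(1))k$ with exponentially good probability once $k$ is of order $t$, so that all points with $k$ somewhat below $\mathbf{d}t$ are already occupied; and (b) a lower bound, $\overline{d}_\omega((i,k),G) \ge (\mathbf{d}^{-1}-o(1))k$ with exponential probability, so that nothing with $k$ somewhat above $\mathbf{d}t$ is occupied.

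For step (b) I would use the standard large-deviation bound for first-passage percolation under the exponential-moment hypothesis \eqref{eq:kestenreq}: by a Kesten-type concentration estimate (e.g.\ via the subadditive argument together with an Azuma/bounded-differences inequality, cf.\ \cite{kesten1986aspects}), there are constants $c_1, c_2>0$ with
\[
\overline{\p}\bigl[\,\overline{d}_\omega((i,k),G) \le (\mathbf{d}^{-1}-\delta)k\,\bigr] \le c_1 e^{-c_2 \delta^2 k}
\]
for all $k$ large and $\delta$ in a fixed range. Taking $\delta \asymp t^{-0.1}$ and $k \asymp t$ gives a bound like $e^{-c t^{0.8}}$ per vertex; a union bound over the $O(t^2)$ relevant vertices in the strip still leaves $e^{-ct^{1/5}}$ to spare (in fact much more), which is where the exponent $1/5$ in the statement has room. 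For step (a), the upper bound on $\overline{d}_\omega((i,k),G)$ follows by exhibiting a cheap path: go straight down the column from $(i,k)$ to $(i,0)$; its passage time is a sum of $k$ i.i.d.\ $\mu$-variables, which by Cramér's theorem is $\le (\mu\text{-mean} + \delta)k \le (\mathbf{d}^{-1}+\delta)k$ except with probability $e^{-c\delta^2 k}$ — again summable over the strip. (If one insists on $\mathbf{d}$ being the genuine directional constant rather than just $1/\mathbb{E}\mu$, step (a) instead uses the subadditive/near-geodesic concatenation argument with the same large-deviation control; either way the estimate is of the same quality.)

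Combining (a) and (b): on the complement of the union of the bad events, every vertex $(i,k)$ with $|i|\le t$ and $k \le (\mathbf{d}-2t^{-0.1})t$ is occupied at time $t$ (by (a), applied also to its neighbors, so it is not on the inner boundary), while no vertex $(i,k)$ with $|i|\le t$ and $k \ge (\mathbf{d}+2t^{-0.1})t$ is occupied (by (b)). Therefore $\partial^{in}\bigl(\bigcup_x \overline{T}(x,t)\bigr)$ restricted to $[-t,t]\times\BZ_+$ is contained in $[-t,t]\times t[\mathbf{d}-2t^{-0.1},\mathbf{d}+2t^{-0.1}]$, which is exactly $C_{\mathbf{d},t}$; and the total failure probability is at most $C t^{2} e^{-c t^{0.8}} \le e^{-c' t^{1/5}}$ for $t$ large, with the remaining small $t$ absorbed into the constant $c'$. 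The main obstacle is step (b): one must be careful that the lower bound on $\overline{d}_\omega((i,k),G)$ holds \emph{uniformly over the starting point of the geodesic on $G$ and over all its spatial excursions} — a geodesic from $(i,k)$ to $G$ may wander far in the horizontal direction before descending. This is handled exactly as in Kesten's argument: either the geodesic stays within a horizontal window of width $O(k)$, in which case a union bound over $O(k)$ endpoints and the concentration estimate suffices, or it exits that window, in which case it has length $\gg k$ and its passage time exceeds $(\mathbf{d}^{-1}-\delta)k$ automatically once $\mu$ has positive mean. Organizing this dichotomy cleanly, with the exponential-moment input \eqref{eq:kestenreq} controlling the fluctuations, is the technical heart of the proof.
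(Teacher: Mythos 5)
Your proposal is correct and follows essentially the same route as the paper: both reduce the statement to two-sided control of passage times to the base line at scale $t^{0.9}$ via Kesten's shape/concentration theorem \cite[Theorem 2]{kesten1993speed}, identify $\mathbf{d}$ with the highest point of the limit shape (equivalently the reciprocal of the vertical directional time constant — it cannot be taken to be $1/\e[\omega(e)]$, so your ``either way'' remark in step (a) must be resolved in favor of the genuine directional constant, or the two bounds would use different constants), and conclude with a union bound over the strip. The only ingredient you elide is the comparison between the half-plane passage times $\overline{d}_\omega(\cdot,G)$ (where the edges of $G\times\{0\}$ are deleted and paths are confined to $\overline{\BH}$) and the full-plane FPP to which Kesten's theorem applies; the paper isolates exactly this as Lemma \ref{lem:bijec}, showing $\bigcup_{x}\overline{T}(x,t)=\bigcup_{x}B^{\BZ^2}(x,t)$.
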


Since the arguments in the general case when $G$ is only assumed to be a Cayley graph are rather tedious, we will first present the arguments in the one-dimensional case $G = \mathbb{Z}$. 
Another reason for this approach is that a somewhat technical point  is resolved (in Lemma \ref{lemma:1dfpp}) in a simpler way compared to the general case, which might be of interest to some readers.
The one-dimensional case is presented in Section \ref{sec:One dimensional case}.
In Section \ref{sec:Notation and preliminary results} we will present some general results we will use in our arguments.
In Section \ref{sec:general} we present proofs for the general case.

\section{Notation and preliminary results}\label{sec:Notation and preliminary results}

\subsection{Notation}\label{subseq:notation} 
Throughout the paper we use the $\ \ \widehat{} \ \ $ and $\ \ \bar{} \ \ $ notation for the directed and undirected model respectively. 
For a result which holds in both models, we will omit the notation $\ \ \widehat{} \ \ $ and $\ \ \bar{} \ \ $ to make it context neutral.
This will be exploited quite often in the present section.

Let $\Lambda_n$ denote the subgraph of $\widehat{G}$ ($\overline{G}$) whose vertices are of the form  $(x,i)$ for $x \in G$ and $0 \leq i \leq n$, and which contains all edges between any two such  vertices.
We will use the same notation  $\Lambda_n$ in both directed and undirected cases to reduce the notation.
Let $\mathfs{F}_n = \sigma\{\omega(e) : e \in \Lambda_n\}$ denote the $\sigma$-algebra generated by the weights of edges in the first $n$ levels.
Let $\widehat{\gamma}(\widehat{x})$ and $\overline{\gamma}(\overline{x})$  be the (almost surely unique) geodesics from $\widehat{x}$ and $\overline{x}$ to $G$ respectively, that is the (random) path from $\widehat{x}$ and $\overline{x}$ to $G$ respectively, which minimizes passage times ($\lambda(\widehat{\tau})$ and $\lambda(\overline{\tau})$)  among all such paths ($\widehat{\tau}$ and $\overline{\tau}$).
Similarly denote the geodesics between vertices $\widehat{x}$ and $\widehat{y}$ (or $\overline{x}$ and $\overline{y}$) by $\widehat{\gamma}(\widehat{x},\widehat{y})$ (or $\overline{\gamma}(\overline{x},\overline{y})$).
Observe that $\widehat{y} \in \widehat{\gamma}(\widehat{x})$ ($y \in \overline{\gamma}(\overline{x})$) implies that $\widehat{\gamma}(\widehat{x},\widehat{y}) \subset \widehat{\gamma}(\widehat{x})$ ($\overline{\gamma}(\overline{x},\overline{y}) \subset \overline{\gamma}(\overline{x})$).
In either directed or undirected case, we will consider the edges in  the paths $\gamma(x)$ and $\gamma(x,y)$ to be ordered starting from $x$, and we will use the notation $\gamma_k(x)$ to denote the path consisting of the first $k$ edges of $\gamma(x)$.
Given the subgraph $\Lambda_n$ of either $\overline{G}$ or $\widehat{G}$, we can restrict the whole model to $\Lambda_n$.
In other words, for $\overline{x}, \overline{y} \in \Lambda_n$ we restrict the set of paths $\overline{\Gamma}(\overline{x},\overline{y})$ only to paths between $\overline{x}$ and $\overline{y}$ whose edges stay in $\Lambda_n$ (and similarly for the directed case).
In the undirected case, the analogues of $\overline{d}_{\omega}$, $\overline{T}(x)$, $\overline{\gamma}(\overline{x})$ and $\overline{\gamma}(\overline{x},\overline{y})$ will be denoted by $\overline{d}_{\omega,\Lambda_n}$, $\overline{T}_{\Lambda_n}(x)$, $\overline{\gamma}_{\Lambda_n}(\overline{x})$ and  $\overline{\gamma}_{\Lambda_n}(\overline{x},\overline{y})$. 
While the analogous notation can be use for the directed case, in the directed case this restriction is just an artifact of the model.
In the case of the directed lattice, let $\widehat{T}^n(x)$ denote the $n$-th level of the tree, that is $\widehat{T}^n(x) = \widehat{T}(x) \cap G_n$.
Observe that $\widehat{T}^n(x)=\emptyset$ corresponds to the event that the tree $\widehat{T}(x)$ (rooted at $x$) is finite and of depth strictly less than $n$.
Also it is clear that $\{|\widehat{T}(x)| = \infty\} = \bigcap_n\{\widehat{T}^n(x) \neq \emptyset\}$.
For the full lattice however, we define 
\[\overline{T}^n(x) = \overline{T}_{\Lambda_n}(x) \cap G_n,\]
that is, the set of vertices in $\overline{G}$ whose lightest path to $G$ (among all paths in $\Lambda_n$) ends at $x$.
Observe that  $\overline{T}^n(x)$ doesn't have to agree with $\overline{T}(x) \cap G_n$.

\begin{lemma}\label{lemma:ergodic}
For any $1 \leq m \leq n$ and any $x \in G$, we have $\e[|T_{\Lambda_n}(x)\cap G_m|] \leq 1$.
In particular for $m = n$, we have $\e[|T^n(x)|] \leq 1$.
\end{lemma}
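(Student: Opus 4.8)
The plan is to exploit the translation invariance in the $G$-direction together with a mass-transport / double-counting argument on the fixed slab $\Lambda_n$. Fix $n$ and $1 \le m \le n$. For $x, y \in G$ define the indicator random variable that the vertex $(y,m) \in G_m$ belongs to the restricted tree $T_{\Lambda_n}(x)$; call it $Z(x,y)$. The key point is that for each fixed $y$, the vertex $(y,m)$ lies in \emph{at most one} of the trees $\{T_{\Lambda_n}(x)\}_{x \in G}$: since $\mu$ has no atoms, the geodesic in $\Lambda_n$ from $(y,m)$ to $G$ is almost surely unique, so there is a unique $x = x(y) \in G$ with $Z(x,y) = 1$ (or none, if one insists on a convention handling ties — but ties have probability zero). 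Hence $\sum_{x \in G} Z(x,y) \le 1$ almost surely, for every $y$.

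First I would take expectations: $\sum_{x \in G} \e[Z(x,y)] \le 1$ for every fixed $y \in G$. Next I would invoke the translation invariance of the model. The measure $\overline{\prob}$ (resp. $\widehat{\prob}$) is a product of i.i.d.\ weights, hence invariant under the graph automorphism of $\Lambda_n$ induced by left-multiplication by any group element $g \in G$; this automorphism sends $(x,i) \mapsto (gx,i)$ and preserves the edge structure and orientation. It follows that $\e[Z(x,y)]$ depends only on the pair $(x,y)$ through $x^{-1}y$ (or, written additively in the $G=\mathbb{Z}$ case, through $y-x$): precisely, $\e[Z(gx, gy)] = \e[Z(x,y)]$. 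Therefore $\e[Z(x,y)] = \e[Z(e, x^{-1}y)]$ where $e$ is the identity, and summing over $y$ for a fixed $x$ gives
\[
\e\big[|T_{\Lambda_n}(x) \cap G_m|\big] = \sum_{y \in G} \e[Z(x,y)] = \sum_{y \in G} \e[Z(e, x^{-1}y)] = \sum_{z \in G} \e[Z(e,z)] \le 1,
\]
where the last inequality is the per-$y$ bound applied at $y = e$, reindexed. The case $m = n$ is the stated special case, and for the directed model $T^n(x) = \widehat{T}(x) \cap G_n$ agrees with $\widehat{T}_{\Lambda_n}(x) \cap G_n$ since directed geodesics ending at level $0$ from a level-$n$ vertex automatically stay in $\Lambda_n$.

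The only genuine subtlety — and the step I would be most careful about — is justifying the interchange $\sum_x \e[Z(x,y)] = \e[\sum_x Z(x,y)]$ and the reindexing of the doubly-indexed sum; both are legitimate because all terms are nonnegative (Tonelli), so no integrability hypothesis beyond nonnegativity is needed, and the finite-mean assumption on $\mu$ plays no role here. A secondary point worth stating explicitly is that the relevant automorphism genuinely preserves $\Lambda_n$ as a graph with its boundary $G = G \times \{0\}$ identified, so that "geodesic to $G$ within $\Lambda_n$" is an automorphism-equivariant notion; this is immediate from the construction of $\overline{G}$ and $\widehat{G}$ as products with $\mathbb{Z}_+$. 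No deeper mass-transport principle is needed in this lemma — the full mass-transport machinery is reserved for the general Cayley-graph arguments later — but this lemma is exactly the finite-slab seed of that argument.
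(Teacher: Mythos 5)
Your proof is correct and is essentially the paper's own argument for the general case: the paper applies the mass transport principle to the transport function $m_1(y,x)=\mathbf{1}\{(y,m)\in T_{\Lambda_n}(x)\cap G_m\}$, using $\sum_x m_1(y,x)\le 1$ (uniqueness of geodesics) and $\sum_y m_1(y,x)=|T_{\Lambda_n}(x)\cap G_m|$, which is exactly your $Z(x,y)$ double-counting; you simply unwind the mass transport identity by hand via translation invariance, Tonelli, and the reindexing $\e[Z(e,z)]=\e[Z(z^{-1},e)]$. (The paper additionally records a separate ergodic-theorem proof for $G=\mathbb{Z}$, but your argument covers that case as a special instance of the general one.)
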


For the purposes of the following lemma, assume that for every finite subset $S$ of $G \times \mathbb{Z}_+$ there is an event $A_S$, such that the family $(A_S)_S$ satisfies
\begin{itemize}
	\item[(i)] $\p[A_{xS}] = \p[A_S]$, for all $x \in G$, where $x(y,n)=(xy,n)$;
	\item[(ii)] $A_{S_2} \subset A_{S_1}$, whenever $S_1 \subset S_2$;
	\item[(iii)] $A_{S_1} \cap A_{S_2} \subset A_{S_1 \cup S_2}$.
\end{itemize}

\begin{lemma}\label{lemma:ergodic_with_conditioning_general}
For all $x \in G$ and all $n>0$ and $M \geq 1$
\begin{equation}\label{eq:ergodic_with_events_directed}
\p[1 \leq |T^n(x)| \leq M, A_{T^n(x)}] \geq \p[1 \leq |T^n(x)| \leq M] - \p[A_{B_{M}(x) \times n}^c],
\end{equation}
or equivalently
\begin{equation}\label{eq:ergodic_with_events_directed-2}
\p[1 \leq |T^n(x)| \leq M, A_{T^n(x)}^c] \leq  \p[A_{B_{M}(x) \times n}^c],
\end{equation}
\end{lemma}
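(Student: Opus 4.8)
The plan is to deduce Lemma~\ref{lemma:ergodic_with_conditioning_general} from Lemma~\ref{lemma:ergodic} together with the structural properties (i)--(iii) of the family $(A_S)_S$. First I would observe that the two displayed inequalities are trivially equivalent: writing $D = \{1 \leq |T^n(x)| \leq M\}$, we have $\p[D] = \p[D \cap A_{T^n(x)}] + \p[D \cap A_{T^n(x)}^c]$, so \eqref{eq:ergodic_with_events_directed} is just \eqref{eq:ergodic_with_events_directed-2} rearranged. Hence it suffices to prove
\[
\p\bigl[1 \leq |T^n(x)| \leq M,\ A_{T^n(x)}^c\bigr] \leq \p\bigl[A_{B_M(x)\times\{n\}}^c\bigr],
\]
where $B_M(x)$ is the graph-distance ball of radius $M$ in $G$ around $x$ (I will write $B_M(x)\times\{n\}$ for what the statement abbreviates as $B_M(x)\times n$).

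The key point is a containment of events: on the event $\{1 \leq |T^n(x)| \leq M\}$, the set $T^n(x)$ is a nonempty connected subset of $G_n$ containing $(x,n)$ with at most $M$ vertices, hence $T^n(x) \subset B_{M}(x)\times\{n\}$ (any connected set of size $\le M$ containing a given vertex lies within graph distance $M-1 < M$ of that vertex; in the directed case one should check that the relevant notion of adjacency within a level still makes $T^n(x)$ connected in $G$, which follows from the definition of the trees as unions of geodesics to $G$ and the structure of $\widehat G$). Then by monotonicity (ii), $A_{B_M(x)\times\{n\}} \subset A_{T^n(x)}$, equivalently $A_{T^n(x)}^c \subset A_{B_M(x)\times\{n\}}^c$. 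Therefore
\[
\{1 \leq |T^n(x)| \leq M\} \cap A_{T^n(x)}^c \subset A_{B_M(x)\times\{n\}}^c,
\]
and taking probabilities gives exactly the desired bound. Note this argument does not even require properties (i) and (iii), nor Lemma~\ref{lemma:ergodic}; the hypotheses (i), (iii) and the first-moment bound are presumably invoked elsewhere (or in a companion argument) where one sums over the possible shapes of $T^n(x)$ and uses translation invariance to control $\p[A_S]$ uniformly. I would include a brief remark to that effect so the reader is not surprised that the proof is short.

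The main obstacle, such as it is, is the connectivity claim $T^n(x) \subset B_M(x)\times\{n\}$: one must be careful about which graph's metric governs $B_M$ and verify that the $n$-th level slice of the geodesic tree is connected with respect to that metric. In the undirected case $\overline T^n(x) = \overline T_{\Lambda_n}(x)\cap G_n$ is a level set of the $\Lambda_n$-restricted geodesic tree rooted at $\overline x$, and connectivity within $G_n$ follows because the restricted tree is a connected subgraph of $\Lambda_n$ and two level-$n$ vertices of $\overline G$ are adjacent in $\overline G$ iff the corresponding vertices are adjacent in $G$; in the directed case $\widehat T^n(x) = \widehat T(x)\cap G_n$ and one argues similarly, using that if $(y,n)$ and $(z,n)$ both lie in $\widehat T(x)$ then following their geodesics down and the tree structure keeps them in a connected level-$n$ set. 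If one wants to avoid this geometric check entirely, an alternative is to replace $B_M(x)\times\{n\}$ by the (larger but still fine) set $\{(y,n): d_G(x,y) \le M\}$ and note the size bound $|T^n(x)| \le M$ forces all its vertices to be within $M$ steps in $G$ of $(x,n)$ via edges of $T^n(x)$ projected to $G$, which again is the same connectivity fact. Either way the proof is a few lines once the containment is established.
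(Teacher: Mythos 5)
Your proposed proof rests on the containment $\{1 \leq |T^n(x)| \leq M\} \Rightarrow T^n(x) \subset B_M(x)\times\{n\}$, and this is false. The set $T^n(x)$ is the level-$n$ slice of the tree \emph{rooted at the base vertex} $(x,0)$; it has no reason to contain $(x,n)$, and its lateral position relative to the column above $x$ is random and unbounded in $M$. For instance, in the directed case a vertex $(y,n)\in\widehat T^n(x)$ only satisfies $d_G(y,x)\le n$, and $n$ is typically much larger than $M$; the constraint $|T^n(x)|\le M$ controls the \emph{diameter} of (each component of) the slice, not its displacement from $x$. So the event inclusion $\{1\le|T^n(x)|\le M\}\cap A_{T^n(x)}^c\subset A_{B_M(x)\times n}^c$ does not hold pointwise, and the lemma cannot be a one-line consequence of monotonicity. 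A secondary problem is your connectivity claim: for a general Cayley graph $G$ the level sets $T^n(x)$ need not be connected (the paper's own argument explicitly invokes property (iii) ``applied when $S_i$ are disjoint components of $T^n(x)$''); connectivity of $\overline T^n(i)$ holds for $G=\mathbb{Z}$ by a planarity/non-crossing argument, but not in general.

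The missing idea is precisely the one you declared unnecessary. Since the slice sits at a random location, one can only compare $A_{T^n(x)}^c$ with $A_{B_M(y)\times n}^c$ for a point $y$ \emph{of the slice itself} (using (ii), and (iii) to handle disconnected slices), and one must then transfer this back to the deterministic ball centered at $x$. That transfer is an averaging step: the paper uses the mass transport principle with $m_2(y,x)=\mathbf{1}\{(y,n)\in T^n(x),\,|T^n(x)|\le M,\,A_{T^n(x)}^c\}$, bounding $\sum_y \e[m_2(x,y)]$ below by the left side of \eqref{eq:ergodic_with_events_directed-2} and $\sum_y\e[m_2(y,x)]$ above by $\p[A_{B_M(x)\times n}^c]$; the equality of these two sums, and the fact that $\p[A_{B_M(y)\times n}^c]=\p[A_{B_M(x)\times n}^c]$, is exactly where the group invariance (i) enters. (In the one-dimensional case the same role is played by the ergodic theorem applied to spatial averages over translates.) Without such an averaging/invariance argument the inequality is not provable, so the gap is structural rather than a fixable detail.
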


We will first present the proofs in the linear case $G =\mathbb{Z}$, which rely on ergodic arguments.
In the general case, the ergodic arguments are replaced by the use of the mass transport principle.

\begin{proof}[Proofs of Lemmas \ref{lemma:ergodic} and \ref{lemma:ergodic_with_conditioning_general} in the one-dimensional case]
To justify the uniform boundedness of the expectations, use ergodicity with respect to left-right translations and the fact that the sets $T^n(i)$ are connected in $\mathbb{Z}$.
In both the directed and undirected models we get for any fixed $K > 0$
\begin{equation}\label{eq:ergodic_linear_basic_1}
\e[|T^n(0)| \wedge K]  = \lim_{k \to \infty} \frac{1}{2k+1}\sum_{i=-k}^k (|T^n(i)| \wedge K).
\end{equation}
Here we need to truncate the summands at $K$, to make sure the summands are bounded and ergodic theorem applies, as the levels don't have to be bounded in the undirected case.
For the claim in Lemma \ref{lemma:ergodic_with_conditioning_general}
\begin{align}\label{eq:ergodic_linear_basic_2}
\p[0<|T^n(0)| \leq M, A_{T^n(0)}^c] & = \lim_{k \to \infty}\frac{1}{2k+1}\sum_{i=-k}^k \mathbf{1}_{\{0<|T^n(i)| \leq M, A_{T^n(i)}^c\}}.
\end{align}

For the directed case, observe that the union of level sets $\bigcup_{i=-k}^k \widehat{T}^n(i)$ is contained in the interval $[-k-n,k+n] \times \{n\}$ and thus the sum on the right hand side of \eqref{eq:ergodic_linear_basic_1} is bounded from above by $2k + 2n+1$, which implies the claim in Lemma \ref{lemma:ergodic}.
For Lemma \ref{lemma:ergodic_with_conditioning_general}, use the monotonicity of $A_S$ with respect to $S$ and observe that the sum on the right hand side of \eqref{eq:ergodic_linear_basic_2} is bounded from above by
\[
\sum_{i=-k-n}^{k+n} \mathbf{1}_{\{A_{I+(i,0)}^c\}} \leq 2n+2k+1,
\]
where $I = \{0,1,\dots,M\} \times \{n\}$.

For the undirected case, we will show that for every $\epsilon > 0$ there is a (deterministic) sequence $(k_\ell)$ such that almost surely
\begin{equation}\label{eq:ergodic_linear_undirected-to_show}
\bigcup_{-k_\ell \leq i \leq k_\ell} \overline{T}^n(i) \subset \{(i,n) : -(1+\epsilon)k_\ell \leq i \leq (1+\epsilon)k_\ell\},
\end{equation}
holds for all but finitely many $\ell$'s.
The arguments will follow as before by restricting the limits to the subsequence as $k_\ell \to \infty$ and taking the limit $\epsilon \to 0$.

To prove the claim in \eqref{eq:ergodic_linear_undirected-to_show}, fix $\epsilon > 0$.
For $i \in \mathbb{Z}$, let 
\[
\nu^n(i) = \max\{|j-i| \ | \ (j,k) \in \gamma_{\Lambda_n}(i,n) \text{, for every }0\le k\le n\}
\]
be the maximal displacement of the geodesic $\gamma_{\Lambda_n}(i,n)$ from $(i,n)$.
Since $\lim_{k \to \infty}\overline{\p}[\nu^n(0) \geq \epsilon k/2] = 0$, we can find a sequence $k_\ell\nearrow\infty$ such that $\sum_\ell \overline{\p}[\nu^n(0) \geq \epsilon k_\ell/2] < \infty$.
Setting $z_\ell=\lceil (1+\epsilon)k_\ell\rceil$
\[
\sum_\ell \Big(\overline{\p}[\nu^n(z_\ell) \geq \epsilon k_\ell/2] + \overline{\p}[\nu^n(-z_\ell) \geq \epsilon k_\ell/2]\Big) < \infty,
\]
so by Borel-Cantelli $\nu^n(z_\ell) < \epsilon k_\ell/2$ and $\nu^n(-z_\ell) < \epsilon k_\ell/2$ hold only for all but finitely many indices $\ell$.
Since the geodesics can not cross, this easily implies that \eqref{eq:ergodic_linear_undirected-to_show} holds for all but finitely many indices $\ell$.
\end{proof}

For the proof of Lemmas \ref{lemma:ergodic} and \ref{lemma:ergodic_with_conditioning_general} in the general case, the ergodic arguments will be replaced by the mass transport principle, introduced in \cite{Benjamini_et_al-99}, see also Section 8.1 in \cite{Lyonswith}.

\begin{theorem}[Mass transport principle]\label{thm:mass_transport}
Let $G $ be a countable group and let $(\Omega,\mathfs{F},\mu)$ be a probability space such that $G$ acts with measure preserving transformations on $\Omega$. 
Assume that for each $x,y \in G$ we have a non-negative random variable $m(x,y, \omega)$ which is $G$-invariant in the sense that $m(gx,gy,g\omega)  = m(x,y,\omega)$.
Then, for any $x \in G$ we have
\[
\sum_{y \in V} \e_\mu[m(x,y,\cdot)] = \sum_{y \in V} \e_\mu[m(y,x,\cdot)],
\]
where $\e_\mu$ is the expectation with respect to the measure $\mu$.
\end{theorem}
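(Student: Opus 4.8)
The plan is to deduce the identity from a purely group-theoretic reindexing, once the randomness has been averaged out. First I would introduce the averaged transport kernel $\bar m(x,y) := \e_\mu[m(x,y,\cdot)] \in [0,\infty]$. Because $m$ is non-negative, every sum that appears below is a well-defined element of $[0,\infty]$ and no Fubini/Tonelli justification is needed; this is precisely why non-negativity is assumed. The crux is to show that $\bar m$ is \emph{diagonally invariant} under $G$, i.e.\ $\bar m(gx,gy) = \bar m(x,y)$ for all $g,x,y \in G$.

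To establish diagonal invariance, substitute $\omega = g^{-1}\omega'$ into the hypothesis $m(gx,gy,g\omega) = m(x,y,\omega)$ to obtain the pointwise identity $m(gx,gy,\omega') = m(x,y,g^{-1}\omega')$. Integrating against $\mu$ and using that the transformation $\omega' \mapsto g^{-1}\omega'$ preserves $\mu$ (it is the inverse of a measure-preserving transformation, hence measure-preserving, so $\int f(g^{-1}\omega')\,\mu(d\omega') = \int f(\omega')\,\mu(d\omega')$ for any measurable $f \geq 0$), we get $\e_\mu[m(gx,gy,\cdot)] = \e_\mu[m(x,y,g^{-1}\cdot)] = \e_\mu[m(x,y,\cdot)]$, which is the claim.

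Granting diagonal invariance, the rest is two bijective changes of variable on the group. Fix $x \in G$ and write $e$ for the identity. The substitution $y = xz$ (a bijection of $G$) together with $\bar m(x,xz) = \bar m(e,z)$ and $\bar m(xz,x) = \bar m(z,e)$ reduces the desired equality $\sum_{y \in G}\bar m(x,y) = \sum_{y \in G}\bar m(y,x)$ to $\sum_{z \in G}\bar m(e,z) = \sum_{z \in G}\bar m(z,e)$. Finally, diagonal invariance with $g = z^{-1}$ gives $\bar m(z,e) = \bar m(e,z^{-1})$, and since $z \mapsto z^{-1}$ is a bijection of $G$ we conclude $\sum_{z}\bar m(z,e) = \sum_{z}\bar m(e,z^{-1}) = \sum_{w}\bar m(e,w)$, which closes the argument.

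I do not expect a genuine obstacle: this is the standard mass transport principle for countable groups, and the proof is essentially bookkeeping with sums in $[0,\infty]$. The only step that is not purely formal is the passage of a group translation through the expectation, which is exactly where the measure-preserving nature of the $G$-action on $\Omega$ is used.
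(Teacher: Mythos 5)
Your proof is correct. The paper does not actually prove this theorem---it is quoted from the literature (Benjamini--Lyons--Peres--Schramm; Lyons--Peres, Section 8.1)---and your argument (diagonal invariance of the averaged kernel $\bar m$ via the measure-preserving action, followed by the two bijective reindexings $y=xz$ and $z\mapsto z^{-1}$, with all sums safely in $[0,\infty]$ by non-negativity) is precisely the standard proof given in those references.
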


\begin{proof}[Proofs of Lemmas \ref{lemma:ergodic} and \ref{lemma:ergodic_with_conditioning_general} in the general case]
Set 
\[
m_1(y,x) = 
\left\{
\begin{array}{ll}
1, & \text{ if } (y,m) \in T_{\Lambda_n}(x) \cap G_m,\\
0, & \text{ otherwise.}
\end{array}
\right.
\]
and $m_2(y,x) = 1$ if and only if $(y,n) \in T^n(x)$, $|T^n(x)| \leq M$ and $A_{T^n(x)}^c$ hold (and $m_2(y,x) = 0$ otherwise).
Then $\sum_y m_1(y,x) = |T_{\Lambda_n}(x) \cap G_m|$ and $\sum_x m_1(y,x) = 1$, so Lemma \ref{lemma:ergodic} follows by an application of mass transport principle to $m_1$.
On the other hand
\[
\sum_y m_2(y,x) \geq  \mathbf{1}_{\{1 \leq |T^n(x)| \leq M, A_{T^n(x)}^c\}}
\]
For a fixed $y \in G$, we have that either $m_2(y,x) = 0$ for all $x$, or there is a unique $x$ such that $m_2(y,x)=1$.
In the latter case, properties (ii) and (iii) of the sets $A_S$ (property (iii) applied when $S_i$ are disjoint components of $T^n(x)$) imply that $A_{B_M(y) \times n}^c$ has to hold.
Therefore,
\[
\sum_x m_2(y,x) \leq \mathbf{1}_{A_{B_M(y) \times n}^c}.
\]
Now the claim in \eqref{eq:ergodic_with_events_directed-2} follows by applying the mass transport principle to $m_2$.
\end{proof}
By applying the same proof using the mass transport principle one can prove:
\begin{lemma}\label{lem:expwidthn}
For every $x\in G$, $\e[w_n(T(x))]=\e[|T(x)\cap G_n|]=1.$
\end{lemma}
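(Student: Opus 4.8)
The plan is to run the same mass transport argument that proves Lemma~\ref{lemma:ergodic} in the general case, but with the truncated forest $T_{\Lambda_n}(x)$ replaced by the complete geodesic forest $T(x)$ and with the receiving level taken to be level $n$ itself. Concretely, I would fix $n \geq 0$ and, for $a,b \in G$, define
\[
m(a,b,\omega) = \mathbf{1}\bigl\{(a,n) \in T(b)\bigr\}
\]
(read with $\widehat\gamma,\widehat T$ in the directed model and with $\overline\gamma,\overline T$ in the undirected one), so that $m(a,b,\omega)=1$ exactly when the geodesic from $(a,n)$ down to $G$ terminates at the vertex $b$. This geodesic is a.s.\ unique since $\mu$ is non-atomic, and it always exists since from $(a,n)$ one may descend one level at a time to reach $G$.

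Next I would check the hypotheses of the mass transport principle (Theorem~\ref{thm:mass_transport}). The variable $m$ is non-negative. The mass leaving $a$ is $\sum_{b \in G} m(a,b,\omega) = 1$ almost surely: by the sub-geodesic property a vertex $(a,n)$ lies in $T(b)$ precisely when its geodesic to $G$ ends at $b$, and this happens for exactly one $b \in G$. The mass arriving at $b$ is $\sum_{a \in G} m(a,b,\omega) = |T(b) \cap G_n| = w_n(T(b))$. Finally, $m$ is $G$-invariant: the left action $g\cdot(y,k) = (gy,k)$ lifts to a measure preserving action on the i.i.d.\ weights under which the geodesic forest is equivariant, $T_{g\omega}(gb) = g\,T_\omega(b)$, whence $m(ga,gb,g\omega) = m(a,b,\omega)$. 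Applying Theorem~\ref{thm:mass_transport} at the vertex $x$ and interchanging sums and expectations by Tonelli (all terms being non-negative) yields
\[
1 = \e\bigl[{\textstyle\sum_{b\in G}} m(x,b,\cdot)\bigr] = \sum_{b\in G}\e[m(x,b,\cdot)] = \sum_{a\in G}\e[m(a,x,\cdot)] = \e\bigl[{\textstyle\sum_{a\in G}} m(a,x,\cdot)\bigr] = \e\bigl[|T(x)\cap G_n|\bigr],
\]
which is the claimed identity $\e[w_n(T(x))] = \e[|T(x)\cap G_n|] = 1$, valid in both the directed and the undirected model.

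I do not anticipate a genuine obstacle: the argument is essentially a transcription of the proof of Lemma~\ref{lemma:ergodic}, and the only point that needs a word of care is the out-degree identity $\sum_{b \in G} m(a,b,\omega) = 1$, which relies on the a.s.\ uniqueness of geodesics to $G$ (from non-atomicity of $\mu$) together with the non-emptiness of $\Gamma((a,n),G)$. For the record, in the one-dimensional case one could instead derive the identity from~\eqref{eq:ergodic_linear_basic_1}: since geodesics do not cross, the map sending $(j,n)$ to the endpoint of its geodesic is monotone and stays within a sublinear displacement of $j$, so $\sum_{i=-k}^{k}|T(i)\cap G_n|$ equals $2k+1$ up to $o(k)$ and its Cesàro average converges to $1$; the mass transport argument is, however, shorter and uniform in the Cayley graph $G$.
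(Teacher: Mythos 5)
Your proof is correct and is essentially the paper's own argument: the paper establishes Lemma \ref{lem:expwidthn} by "applying the same proof using the mass transport principle" as in Lemma \ref{lemma:ergodic}, i.e.\ exactly your transport $m(a,b)=\mathbf{1}\{(a,n)\in T(b)\}$ with unit out-degree from a.s.\ existence and uniqueness of the geodesic to $G$ and in-degree $|T(b)\cap G_n|$. No further comment is needed.
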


\begin{lemma}
	\label{lemma:tree_levels}
	In both the directed and undirected case (we use setting neutral notation), if $T^n(x) \neq \emptyset$ then $T^m(x) \neq \emptyset$, for all $m \leq n$.
        Furthermore, almost surely we have that $|T(x)| = \infty$ holds if and only if $T^n(x) \neq \emptyset$ for all $n\geq 1$.
\end{lemma}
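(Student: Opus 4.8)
The plan is to prove the monotonicity of the events $\{T^n(x)\neq\emptyset\}$ first, uniformly in the two models, and then the dichotomy, handling the directed and undirected cases separately because $\widehat T^n(x)=\widehat T(x)\cap G_n$ sits inside the tree whereas $\overline T^n(x)=\overline T_{\Lambda_n}(x)\cap G_n$ need not. For the monotonicity: assume $T^n(x)\neq\emptyset$, fix $m\le n$, take a witness $(y,n)\in T^n(x)$ and let $P$ be its geodesic to $G$ ($\widehat\gamma((y,n))$, resp.\ $\overline\gamma_{\Lambda_n}((y,n))$), which lies in $\Lambda_n$ and ends at $x$. Reading $P$ from $x$, let $P'$ be its longest initial segment contained in $\Lambda_m$; since $P$ climbs to level $n\ge m$, the terminal vertex of $P'$ is at level exactly $m$, call it $(w,m)$ (in the directed model $P$ is level-monotone, so $P'$ is simply the last $m$ edges of $P$). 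Because sub-paths of geodesics are geodesics, $P'$ is, up to orientation, the $\Lambda_n$-geodesic from $(w,m)$ to $G$, so it ends at $x$ and $\lambda(P')=d_{\omega,\Lambda_n}((w,m),G)$. Now $P'$ is a path in $\Lambda_m$ from $(w,m)$ to $x\in G$, so $d_{\omega,\Lambda_m}((w,m),G)\le d_{\omega,\Lambda_m}((w,m),x)\le\lambda(P')=d_{\omega,\Lambda_n}((w,m),G)\le d_{\omega,\Lambda_m}((w,m),G)$, the last inequality using $\Lambda_m\subset\Lambda_n$; hence all these quantities coincide, and uniqueness of the $\Lambda_m$-geodesic forces $\gamma_{\Lambda_m}((w,m))=P'$, which ends at $x$. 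Thus $(w,m)\in T^m(x)$, so $T^m(x)\neq\emptyset$, and in particular the events $\{T^n(x)\neq\emptyset\}$ are nested decreasing.

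For the dichotomy in the directed model I would first note that each level $\widehat T^j(x)$ is finite: a directed geodesic from $(z,j)$ has exactly $j$ edges and ends in $G$ within graph-distance $j$ of $z$, so membership of $(z,j)$ in $\widehat T^j(x)$ forces $z$ into the finite ball of radius $j$ about $x$ in $G$. Hence if $|\widehat T(x)|=\infty$ then infinitely many --- and, by the monotonicity, all --- levels are nonempty, while if every level is nonempty then $\widehat T(x)\supseteq\bigcup_n\widehat T^n(x)$ is infinite; this is the deterministic identity $\{|\widehat T(x)|=\infty\}=\bigcap_n\{\widehat T^n(x)\neq\emptyset\}$ already quoted in the text. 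For the undirected model I would first show that $|\overline T(x)|=\infty$ implies, almost surely, $\overline T^n(x)\neq\emptyset$ for all $n$: by König's lemma the infinite locally finite tree $\overline T(x)$ contains an infinite simple ray $x=u_0,u_1,\dots$, and by Lemma~\ref{lem:expwidthn} each level $\overline T(x)\cap G_n$ has expected size $1$, hence is a.s.\ finite, so the ray cannot remain in any slab $\Lambda_N$ and, changing by at most one level per step, visits every level. Taking $(z_n,n)$ to be the first vertex of the ray at level $n$, the ray-segment from $x$ to $(z_n,n)$ never rises above level $n$, so it lies in $\Lambda_n$; being the unique path from $(z_n,n)$ to $x$ in the geodesic tree $\overline T(x)$ it equals $\overline\gamma((z_n,n))$, and the same passage-time comparison as in the monotonicity step yields $(z_n,n)\in\overline T^n(x)$.

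The remaining implication --- that in the undirected model $\overline T^n(x)\neq\emptyset$ for all $n$ forces $|\overline T(x)|=\infty$ almost surely --- is the one I expect to be the real obstacle, precisely because a vertex of $\overline T^n(x)$ need not lie in the unrestricted tree at all. The plan is a compactness extraction. For each $n$, a witness in $\overline T^n(x)$ furnishes (via the construction in the monotonicity step) a path $P_n$ starting at $x$, ending in $G_n$, contained in $\Lambda_n$, which is the $\Lambda_n$-geodesic from its level-$n$ endpoint to $G$ and ends at $x$; in particular $|P_n|\ge n$. Since $\overline G$ is locally finite, a diagonal (König) argument produces an infinite simple path $b_0=x,b_1,b_2,\dots$ such that, for every $k$, the segment $b_0b_1\cdots b_k$ is an initial segment of $P_n$ for infinitely many $n$. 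Fixing $k$ and using those $n$, $b_0\cdots b_k$ is a sub-path of the $\Lambda_n$-geodesic $P_n$, hence is itself the $\Lambda_n$-geodesic from $b_k$ to $G$ and ends at $x$, so $\lambda(b_0\cdots b_k)=\overline d_{\omega,\Lambda_n}(b_k,G)$; letting $n\to\infty$ and using that $\overline d_{\omega,\Lambda_n}(b_k,G)=\overline d_\omega(b_k,G)$ as soon as $\Lambda_n$ contains the finite geodesic $\overline\gamma(b_k)$, we get $\lambda(b_0\cdots b_k)=\overline d_\omega(b_k,G)$, whence $b_0\cdots b_k=\overline\gamma(b_k)$ by uniqueness and $b_k\in\overline T(x)$. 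The $b_k$ being pairwise distinct, $|\overline T(x)|=\infty$. The delicate point throughout is bridging the $\Lambda_n$-restricted objects and the unrestricted ones: this needs both the compactness extraction of a limiting infinite geodesic ray and the stabilization $\overline d_{\omega,\Lambda_n}\to\overline d_\omega$ on finite sub-configurations, with Lemma~\ref{lem:expwidthn} playing the analogous role in the opposite direction.
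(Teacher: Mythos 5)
Your proof is correct, and while your monotonicity step and the directed case coincide in substance with the paper's argument (the paper picks the \emph{last} level-$m$ vertex of the geodesic read from the top, you pick the endpoint of the longest initial $\Lambda_m$-segment read from $x$ --- different vertices in general, but the sub-geodesic plus restriction-monotonicity comparison is identical), your treatment of the undirected equivalence takes a genuinely different route. For ``$|\overline T(x)|=\infty$ implies all levels nonempty'' you extract a ray in $\overline T(x)$ via K\"onig's lemma and use the a.s.\ finiteness of its levels (Lemma \ref{lem:expwidthn}) to push the ray through every level; the paper instead argues that $\overline T^n(x)=\emptyset$ traps $\overline T(x)$ inside $\Lambda_{n-1}$ and then invokes Lemma \ref{lemma:ergodic} --- close in spirit, and yours is arguably the more transparent of the two. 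The real divergence is the converse. The paper proves the contrapositive: if $\overline T(x)$ is finite, it sets $n$ to the maximal height of the geodesics of the finitely many outer-boundary vertices and shows $\overline T^{n+1}(x)=\emptyset$, since any $\Lambda_{n+1}$-geodesic reaching $x$ from level $n+1$ would have to pass through a boundary vertex whose restricted geodesic already coincides with its unrestricted one and ends elsewhere. You argue directly, compactness-extracting an infinite path $b_0b_1\cdots$ from the witnesses $P_n$ and upgrading each $b_k$ to a member of the unrestricted tree via the stabilization of $\overline d_{\omega,\Lambda_n}(b_k,G)$. Your route buys an explicit infinite branch of $\overline T(x)$ and avoids the boundary bookkeeping, at the price of the diagonal extraction. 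One cosmetic remark: you do not actually need prior existence of the finite geodesic $\overline\gamma(b_k)$ to pass to the limit --- the fixed number $\lambda(b_0\cdots b_k)$ equals the non-increasing quantity $\overline d_{\omega,\Lambda_n}(b_k,G)$ along infinitely many $n$, and this decreases to $\overline d_\omega(b_k,G)$ because every path to $G$ lies in some $\Lambda_n$; attainment and uniqueness are in any case standing assumptions of the paper.
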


\begin{proof}
	In the directed case both claims are trivial from the definition of $\widehat{T}^n(x)$.
	For the undirected case, assume that $\overline{y} \in \overline{T}^n(x)$. 
	Then consider the geodesic $\overline{\gamma}_{\Lambda_n}(\overline{y})$ and the last point $\overline{z}$ on this geodesic which intersects the level $m$, that is $G_m$.
	Then the part of the geodesic $\overline{\gamma}_{\Lambda_n}(\overline{y})$ between $\overline{z}$ and $x$ minimizes the value $\lambda(\overline{\sigma})$ over all paths $\overline{\sigma}$ between $\overline{z}$ and $G$ which are contained in $\Lambda_n$.
	Since this path is also contained in $\Lambda_m$, it also minimizes $\lambda(\overline{\sigma})$ over all paths $\overline{\sigma}$ between $\overline{z}$ and $G$ which are contained in $\Lambda_m$, which implies $\overline{z} \in \overline{T}^m(x)$.
	Thus $\overline{T}^m(x) \neq \emptyset$.

Assuming that $|\overline{T}(x)| = \infty$ and $\overline{T}^n(x) = \emptyset$ we necessarily have $\overline{T}(x) = \overline{T}_{\Lambda_{n-1}}(x)$, which then implies that there is an $m \leq n-1$ such that $|\overline{T}_{\Lambda_{n-1}}(x) \cap G_m| = \infty$. 
This however contradicts Lemma \ref{lemma:ergodic}.

On the other hand assuming that $|\overline{T}(x)| < \infty$, we have only finitely many neighbors of $\overline{T}(x)$. 
Take $n = \max_{\overline{y}}h(\overline{\gamma}(\overline{y}))$, where the maximum is taken over all vertices $\overline{y}$ in the outer boundary of $\overline{T}(x)$.
We claim that $\overline{T}^{n+1}(x) = \emptyset$.
Assume that $\overline{T}^{n+1}(x) \neq \emptyset$ and take $\overline{z} \in \overline{T}^{n+1}(x)$. 
Since the geodesic $\overline{\gamma}_{\Lambda_{n+1}}(\overline{z})$ ends at $x$, it must contain a vertex $\overline{z}_1$ in the outer boundary of $\overline{T}(x)$.
Then the geodesic $\overline{\gamma}_{\Lambda_{n+1}}(\overline{z}_1)$ connects $\overline{z}_1$ and $x$.
However, this is impossible, since by the choice of $n$ we have $\overline{\gamma}_{\Lambda_{n+1}}(\overline{z}_1) = \overline{\gamma}(\overline{z}_1)$, and $\overline{z}_1 \notin \overline{T}(x)$.
\end{proof}

The idea of the proof of Theorem \ref{thm:main} is based on the following killing argument. 
If the tree $T(x)$ survives for a long time, with high probability we find a sequence of exceptional levels, such that with probability bounded away from zero, the tree dies out in a bounded number of levels. 
The following is a technical lemma which provides the basis for the argument.
Again we use notation oblivious to the setting.

\begin{lemma}\label{lemma:killing_argument}
Assume that there is a positive integer $N$, $\delta >0$ and a sequence of integers $n_k$ converging to infinity, such that for every $k$ there is an event $A_k \in \mathfs{F}_{n_k}$  satisfying the following
\[
A_k \subset \{T^{n_k}(x) \neq \emptyset\}, \ \ \prob[A_k] \geq \delta \prob[T^{n_k}(x) \neq \emptyset]  \ \text{ and } \  \prob[T^{n_k+N}(x) = \emptyset|A_k] \geq \delta.
\]
Then $\p[|T(x)| = \infty] = 0$.
\end{lemma}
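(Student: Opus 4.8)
The plan is to convert the two quantitative hypotheses into a single contraction inequality for the survival probabilities and then let the scale go to infinity. Write $p_m := \prob[T^m(x)\neq\emptyset]$ and $q := \prob[|T(x)|=\infty]$. The first step is to record the monotone structure: by Lemma~\ref{lemma:tree_levels} the events $\{T^m(x)\neq\emptyset\}$ are decreasing in $m$ and $\{|T(x)|=\infty\}=\bigcap_{m\ge 1}\{T^m(x)\neq\emptyset\}$, so $p_m$ is non-increasing and, by continuity of probability from above, $p_m\downarrow q$. In particular $p_{n_k}\to q$ and $p_{n_k+N}\to q$ along the given subsequence, and $q\le p_{n_k+N}$ for every $k$.

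Next I would fix $k$ and decompose $p_{n_k}$ according to whether $T^{n_k+N}(x)$ is empty. Using $\{T^{n_k+N}(x)\neq\emptyset\}\subset\{T^{n_k}(x)\neq\emptyset\}$ (Lemma~\ref{lemma:tree_levels}) and $A_k\subset\{T^{n_k}(x)\neq\emptyset\}$ (hypothesis),
\[
p_{n_k}=\prob[T^{n_k+N}(x)\neq\emptyset]+\prob[T^{n_k}(x)\neq\emptyset,\,T^{n_k+N}(x)=\emptyset]\ \ge\ p_{n_k+N}+\prob[A_k,\,T^{n_k+N}(x)=\emptyset].
\]
The last term equals $\prob[A_k]\,\prob[T^{n_k+N}(x)=\emptyset\mid A_k]\ge \delta\cdot\delta\,p_{n_k}=\delta^2 p_{n_k}$ by the remaining two hypotheses (the bound $\prob[A_k]\ge\delta\,p_{n_k}$ and the conditional survival bound). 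Hence $p_{n_k+N}\le(1-\delta^2)p_{n_k}$.

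Finally I would combine these: $q\le p_{n_k+N}\le(1-\delta^2)p_{n_k}$, and letting $k\to\infty$ (so $n_k\to\infty$) yields $q\le(1-\delta^2)q$, i.e.\ $\delta^2 q\le 0$. Since $\delta>0$ this forces $q=0$, which is exactly $\prob[|T(x)|=\infty]=0$.

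There is no genuine obstacle in this lemma; the only point requiring a little care is that a single exceptional scale does not suffice, since the inequality $p_{n_k+N}\le(1-\delta^2)p_{n_k}$ alone does not give $q=0$ when $p_{n_k}$ strictly exceeds $q$. One must therefore pass to the limit along $n_k\to\infty$, and this is the sole place the assumption $n_k\to\infty$ is used. (The measurability $A_k\in\mathfs{F}_{n_k}$ plays no role in the proof of the lemma itself; it is needed only to make such events constructible when the lemma is applied.)
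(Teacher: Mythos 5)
Your proof is correct and follows essentially the same route as the paper's: both rest on the decomposition of $\{T^{n_k}(x)\neq\emptyset\}$ at level $n_k+N$ together with the bound $\prob[A_k,\,T^{n_k+N}(x)=\emptyset]\ge\delta^2\prob[T^{n_k}(x)\neq\emptyset]$. The only (harmless) difference is the endgame: the paper telescopes a fixed additive decrement along a sparsified subsequence to reach the absurdity $\lim_k p_{n_k}=-\infty$, while you pass directly to the limit of the multiplicative contraction $p_{n_k+N}\le(1-\delta^2)p_{n_k}$, which is slightly cleaner and dispenses with the need to assume $n_{k+1}>n_k+N$.
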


\begin{proof}
By Lemma \ref{lemma:tree_levels} it suffices to show that the probability that $T^n(x) \neq \emptyset$ for all $n$ is equal to 0.
Assume the opposite, that $\p\Big[\bigcap_n \left\{T^n(x) \neq \emptyset\right\}\Big] =p > 0$.
By Lemma \ref{lemma:tree_levels}, the events $\left\{T^n(x) \neq \emptyset\right\}$ are decreasing, and therefore $\p[A_k] \geq \delta p$ for all $k$.
Without loss of generality we can assume that the sequence $n_k$ satisfies $n_{k+1} > n_k +N$.
We can bound the probabilities of $T^{n_k}(x) \neq \emptyset$ recursively
\begin{align*}
\p[T^{n_{k+1}}(x) \neq \emptyset] & \leq \p[T^{n_{k+N}}(x) \neq \emptyset, A_k] + \p[T^{n_{k}}(x) \neq \emptyset, A_k^c] \\
 & \leq (1-\delta)\p[A_k] + \p[T^{n_{k}}(x) \neq \emptyset] - \p[A_k] \\
 & \leq \p[T^{n_{k}}(x) \neq \emptyset] - \delta^2 p,
\end{align*}
where in the first inequality we used Lemma \ref{lemma:tree_levels}.
The above yields $\lim_{k}\p[T^{n_k}(x) \neq \emptyset] = -\infty$ which gives the contradiction.
\end{proof}

\section{One dimensional case}\label{sec:One dimensional case}

In this section we study the simplest case $G = \mathbb{Z}$ and prove Theorem \ref{thm:main}.
To reduce the notation we will assume (without loss of generality) the mean edge weights are 1, that is $\ev[\omega(e)] = 1$. In this section we denote by $\partial\BH=\BZ\times \{0\}$.

\subsection{Directed case} 
In the whole subsection we assume that random variables $\omega(e)$ satisfy the conditions in Theorem \ref{thm:main}. 

For the killing argument in the directed case we will use Lemma \ref{lemma:killing_argument}, with events $A_n$ for which there are vertices $x_1$ and $x_2$ on the $n$-th level on different sides of $\widehat{T}^n(0)$ and close to $\widehat{T}^n(0)$, such that both $\widehat{d}_\omega(x_1, \partial \widehat{\mathbb{H}})$ and $\widehat{d}_\omega(x_2, \partial \widehat{\mathbb{H}})$ are not much larger than $\min\{\widehat{d}_\omega(y, \partial \widehat{\mathbb{H}}) : y \in \widehat{T}^n(0)\}$. 
In order to achieve the lower bound on the probability $\widehat{\prob}[A_n]$ we observe that $A_n^c$ forces a geodesic in $\widehat{T}(0)$ below level $n$ not to deviate much from one of the two directions $-\theta_l$ or $-\theta_r$.  
The technical details are contained in the following lemmas.

First we present an elementary abstract result.

\begin{lemma}\label{lemma:martingale_argument}
Let $A$ be an event and $(X_n)_{n \geq 1}$ a process which  is non-decreasing on $A$. Assume that for some  $\epsilon >0$ there exist positive integers $k$ and $N$ such that for every $n \geq N$ we have $\p[A, X_n - X_{n-k} \leq (1-\epsilon)k] \leq \epsilon$. Then
 \[\p\Big[A,\limsup_{n \to \infty}\frac{X_n}{n}\geq 1-2\sqrt{\epsilon}\Big] \geq \p[A] - \sqrt{\epsilon}.\]

\end{lemma}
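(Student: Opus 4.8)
\emph{Proof plan.} The idea is to compare $X_n$ along the arithmetic progression $n_j := N + jk$, $j \ge 0$, and to control the asymptotic density of the "bad" scales. We may assume $\epsilon \in (0,1)$, since for $\epsilon \ge 1$ the right-hand side $\p[A] - \sqrt{\epsilon}$ is $\le 0$ and there is nothing to prove. For $i \ge 1$ set $B_i := A \cap \{X_{n_i} - X_{n_{i-1}} \le (1-\epsilon)k\}$; applying the hypothesis with $n = n_i \ge N$ gives $\p[B_i] \le \epsilon$ for every $i$. Since $X$ is non-decreasing on $A$ and $B_i \subseteq A$, on the event $A$ each increment $X_{n_i} - X_{n_{i-1}}$ is non-negative and it exceeds $(1-\epsilon)k$ whenever $\mathbf{1}_{B_i} = 0$. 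Telescoping over $i = 1,\dots,j$ therefore yields, on $A$,
\[
X_{n_j} - X_N \ \ge\ (1-\epsilon)k\,(j - S_j), \qquad S_j := \sum_{i=1}^j \mathbf{1}_{B_i}.
\]
Dividing by $n_j = N + jk$ and letting $j \to \infty$ — the term $X_N/n_j$ vanishes, $(1-\epsilon)kj/n_j \to 1-\epsilon$, and $(1-\epsilon)kS_j/n_j$ differs from $(1-\epsilon)S_j/j$ by a deterministically $o(1)$ term (as $|S_j| \le j$) — one obtains, on $A$,
\[
\limsup_{n\to\infty}\frac{X_n}{n} \ \ge\ \limsup_{j\to\infty}\frac{X_{n_j}}{n_j} \ \ge\ (1-\epsilon)\Bigl(1 - \liminf_{j\to\infty}\frac{S_j}{j}\Bigr).
\]

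The main point, and the only step requiring care, is to bound $\liminf_j S_j/j$ from above with high probability using \emph{only} the per-scale bound $\p[B_i]\le\epsilon$; note that no independence among the $B_i$ is available, so a Borel--Cantelli or concentration argument is not applicable directly. Instead one uses a soft interchange of expectation and $\liminf$: the random variables $g_j := \inf_{\ell \ge j} S_\ell/\ell$ increase to $\liminf_j S_j/j$ and satisfy $0 \le g_j \le S_j/j \le 1$, so by monotone convergence
\[
\e\Bigl[\liminf_{j} \frac{S_j}{j}\Bigr] = \lim_j \e[g_j] \ \le\ \sup_j \e\Bigl[\frac{S_j}{j}\Bigr] = \sup_j \frac1j\sum_{i=1}^j \p[B_i] \ \le\ \epsilon.
\]
Markov's inequality then gives $\p\bigl[\liminf_j S_j/j > \sqrt{\epsilon}\bigr] \le \sqrt{\epsilon}$.

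It remains to assemble the pieces. On the event $A \cap \{\liminf_j S_j/j \le \sqrt{\epsilon}\}$ the first display gives $\limsup_n X_n/n \ge (1-\epsilon)(1-\sqrt{\epsilon}) \ge 1 - 2\sqrt{\epsilon}$, the last inequality being elementary for $\epsilon \in (0,1)$ (equivalently, $\liminf_j S_j/j \le \sqrt{\epsilon} \le \tfrac{2\sqrt\epsilon - \epsilon}{1-\epsilon}$, which is exactly the condition for $(1-\epsilon)(1-\liminf_j S_j/j) \ge 1-2\sqrt\epsilon$). Hence
\[
\p\Bigl[A,\ \limsup_{n}\frac{X_n}{n} \ge 1 - 2\sqrt{\epsilon}\Bigr] \ \ge\ \p\Bigl[A,\ \liminf_{j}\frac{S_j}{j} \le \sqrt{\epsilon}\Bigr] \ \ge\ \p[A] - \p\Bigl[\liminf_{j}\frac{S_j}{j} > \sqrt{\epsilon}\Bigr] \ \ge\ \p[A] - \sqrt{\epsilon},
\]
which is the claimed bound. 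The crux is thus the passage from the bound on each $\p[B_i]$ to a statement about the Ces\`aro frequency of the bad scales; the telescoping estimate and the elementary limits are routine bookkeeping.
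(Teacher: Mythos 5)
Your proof is correct and follows essentially the same route as the paper: both arguments telescope the increments of $X$ along the arithmetic progression of step $k$ and convert the per-scale bound $\p[B_i]\le\epsilon$ into a statement about the asymptotic fraction of bad scales via a first-moment argument. The only (cosmetic) difference is that you bound $\e[\liminf_j S_j/j]\le\epsilon$ by Fatou and then apply Markov, whereas the paper applies a reverse-Markov splitting to the count of good increments at each fixed $n$ and then passes to the $\limsup$ of the resulting events.
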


\begin{proof}
Choose $k$ and $N$ as in the statement and let $Y_n = \sum_{l = N}^{N+n-1} \mathbf{1}_{\{X_{lk+k}-X_{lk} \geq (1-\epsilon)k\}}$.
Denoting $p_n = \p(A, Y_n \geq (1-\sqrt{\epsilon})n)$ we have
\[
n(\p[A]-\epsilon) \leq \e[Y_n\mathbf{1}_{A}] \leq np_n + n(1-\sqrt{\epsilon})(\p[A]- p_n),
\]
which yields $p_n \geq \p[A] - \sqrt{\epsilon}$. This immediately implies
\[\p[A, \limsup_n Y_n/n \geq (1-\sqrt{\epsilon})] \geq \p[A] -\sqrt{\epsilon}\]
Since the process $X_n$ is non-decreasing, we have on $A$
  \[\limsup_n \frac{X_n}{n} \geq (1-\epsilon)\limsup_n \frac{Y_n}{n},\] which yields the claim.
\end{proof}

\begin{proposition}\label{prop:estimates_on_minima}
For $n \geq 1$ and $x = (k,n) \in \widehat{\BH}$ on the level $n$ (with $x$ possibly depending on $n$) set $W_n = \widehat{d}_\omega(x,\partial\widehat{\mathbb{H}})$.
Then there exists a constant $\kappa >0$ such that \[\lim_n \widehat{\p}[W_n < (1-\kappa)n] = 1.\]
\end{proposition}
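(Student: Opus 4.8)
The plan is to show that the passage time from a level-$n$ vertex $x=(k,n)$ down to $\partial\widehat{\BH}$ is, with probability tending to one, strictly smaller than $n$ by a linear amount. The essential point is that in the directed lattice $\widehat{\BH}$ every directed path from $x$ to the base has exactly $n$ edges, so $W_n$ is an infimum of sums of $n$ i.i.d.\ weights over the (exponentially many) directed paths, and one of them will be cheap. First I would reduce to a clean combinatorial statement: $W_n \leq \min_{\widehat\gamma} \sum_{\widehat e \in \widehat\gamma} \omega(\widehat e)$, where the minimum is over directed paths of length $n$ from $x$ down to level $0$; there are at least $2^n/\mathrm{poly}(n)$ such paths, and although their weights are not independent, any two of them that first branch at level $j$ share only the top $n-j$ edges. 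Since $\mu$ has finite mean $\ev[\omega(e)]=1$ but may have heavy tails, I cannot directly use a Chernoff bound on a single path; instead I would truncate.

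The key steps, in order, are: (1) Fix a small $\eta>0$ and let $\omega'(\widehat e) = \omega(\widehat e)\wedge M$ for a large constant $M=M(\eta)$ chosen so that $\ev[\omega'(e)] \leq 1$ still but $\ev[\omega'(e)] \geq 1-\eta$ — actually we want $\ev[\omega'(e)] < 1$, which holds automatically since truncation only decreases the mean; pick $M$ so that $m_M := \ev[\omega(e)\wedge M]$ satisfies $m_M \geq 1-\eta$ while trivially $m_M \leq 1$. Since $\omega' \leq \omega$, bounding $W_n$ above using $\omega'$-weights suffices. (2) Build a reference structure: inside $\widehat\BH$ below $x$, fix a collection of directed paths organized as a binary-tree-like branching pattern — concretely, a set of $2^{\lfloor n/L\rfloor}$ paths that pairwise branch at distinct heights, or more simply use a second-moment / Markov-type argument on $Z = \sum_{\widehat\gamma} \mathbf 1\{\lambda'(\widehat\gamma) < (1-\kappa)n\}$ counting cheap paths. (3) Estimate $\ev[Z]$ from below: for a single path, $\lambda'(\widehat\gamma)$ is a sum of $n$ i.i.d.\ bounded variables with mean $m_M \approx 1-\eta$, so by a standard large-deviation (Chernoff) bound $\widehat\p[\lambda'(\widehat\gamma) < (1-\kappa)n] \geq e^{-cn}$ provided $\kappa$ is chosen slightly larger than $\eta$ but the rate $c = c(\kappa,\eta,M)$ is small; with $\gtrsim 2^n$ paths this gives $\ev[Z] \to \infty$ fast. (4) Control the variance: $\mathrm{Var}(Z) = \sum_{\widehat\gamma_1,\widehat\gamma_2}\mathrm{Cov}(\cdots)$, and pairs branching at height $j$ contribute a correlation that can be bounded by splitting each path's weight into its shared top portion and its disjoint bottom portions; a careful accounting shows $\mathrm{Var}(Z) = o(\ev[Z]^2)$ because the dominant contribution to $\ev[Z]$ comes from paths whose cheapness is "spread out," so sharing a short top segment barely helps. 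Then Chebyshev gives $\widehat\p[Z = 0] \to 0$, i.e.\ $\widehat\p[W_n < (1-\kappa)n] \to 1$.

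The main obstacle I anticipate is precisely step (4), the second-moment control of correlations, because the branching paths share edges and a naive bound on $\mathrm{Cov}$ is too weak. The cleanest way around this is to avoid second moments altogether: instead of arbitrary directed paths, select a \emph{single} well-chosen path greedily, or — better — use a recursive/renewal argument down the levels. Partition the $n$ levels into $n/L$ blocks of height $L$; in each block, from whichever vertex the current cheapest path has reached, there are $\geq 2^{L-1}$ choices of sub-path of length $L$ through that block to the next block boundary, and with probability $\geq 1 - (1-q)^{2^{L-1}}$ (for $q$ the single-sub-path probability of cost $\leq (1-\kappa)L$, which is positive once $L$ is large by the LLN and truncation) at least one is cheap; moreover the blocks are independent since they use disjoint edge sets. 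A direct Chernoff/Azuma bound on the number of "successful" blocks then forces the total cost below $(1-\kappa')n$ with probability $\to 1$. This block decomposition sidesteps the correlation problem entirely and also automatically handles the heavy-tailed $\mu$ via the truncation in step (1). I would then record the resulting $\kappa>0$ (any value strictly between $0$ and, say, half the per-block savings works) and conclude.
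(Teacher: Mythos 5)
Your block decomposition is the right geometric idea --- the paper's proof is itself a block argument, with blocks of height $2$ --- but two of your key estimates point in the wrong direction. First, the truncation: since $\omega'=\omega\wedge M\le\omega$, every path is \emph{cheaper} under $\omega'$, so $\inf_{\widehat{\gamma}}\lambda'(\widehat{\gamma})\le W_n$; proving that the truncated passage time is below $(1-\kappa)n$ therefore gives no upper bound on $W_n$, and likewise your bound of $ML$ on the cost of a ``failed'' block bounds only the truncated cost, not the true one. To bound $W_n$ from above you must exhibit a path that is cheap under the untruncated weights. Second, the estimate $1-(1-q)^{2^{L-1}}$ for the probability that some sub-path through a block is cheap treats the $2^{L-1}$ sub-paths emanating from a common vertex as independent; they share their top edges, the events are positively correlated, and the product bound is invalid. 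Even granting some per-block success probability, the success/failure accounting does not close: the savings of $\kappa L$ on successful blocks must beat the (uncontrolled, under the true weights) excess cost of failed blocks, which forces the failure probability to be very small --- something you have not established and which, for a single sub-path, is false since $q\to 0$ as $L\to\infty$ by the law of large numbers.

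All of this is avoidable, and the correct fix collapses your scheme to the paper's one-paragraph proof. Take $L=2$ and, in each height-$2$ block, minimize over just the two ``diamond'' paths from $(k,2i+2)$ down to $(k,2i)$, one through $(k-1,2i+1)$ and one through $(k+1,2i+1)$. Their costs $A$ and $B$ are independent sums of two untruncated weights, each of mean $2$, and since $\mu$ is non-atomic, $\ev[\min(A,B)]=2-\ev[(A-B)^+]<2-2\kappa$ for some $\kappa>0$. The $n/2$ block minima are i.i.d.\ (their edge sets are disjoint) with finite mean, so the strong law of large numbers alone --- no Chernoff bound, no truncation, no success/failure dichotomy, and no second-moment computation --- yields $W_n<(1-\kappa)n$ with probability tending to $1$.
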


\begin{proof}
Since $W_n$ is non-decreasing, it suffices to show the claim when taking limit along even values of $n$.
For any vertex $(k,n)$ where $-n \leq k \leq n$ for both $n$ and $k$ even, $\widehat{d}_\omega((k,n),\partial \widehat{\BH})$ can be bounded from above by the length of a shortest path from the $(k,0)$ to $(k,n)$ which never deviates more than distance 1 from the line $(k,l)$, for $0 \leq l \leq n$ (see Figure \ref{fig:direct1}).
\begin{figure}
\centering
\includegraphics[width=0.5\textwidth]{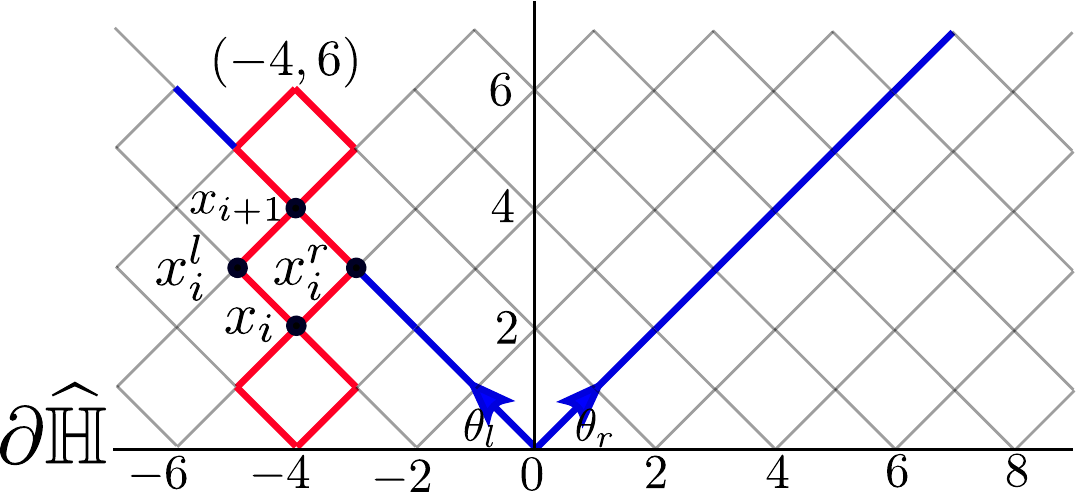}
\caption{\label{fig:direct1}}
\end{figure}
This can in turn be written as the sum of $n/2$ i.i.d.~random variables
\[
\sum_{i=0}^{n/2-1}\min\{\omega(x_{i+1},x_i^l) + \omega(x_i^l,x_{i}), \omega(x_{i+1},x_i^r) + \omega(x_i^r,x_{i})\},
\]
where $x_i = (k,2i)$, $x_i^l=x_i + \theta_l$, $x_i^r=x_i + \theta_r$.
Since random variables $\omega(e)$ are independent and have continuous distribution, the terms in the above sum have finite mean which is strictly less than $2-2\kappa$, for $\kappa$ small enough.
This proves the claim.
\end{proof}

For fixed positive integers $M$ and $k$ define the cylinders $\mathcal{C}_{M,k}^l$ and $\mathcal{C}_{M,k}^r$ as subgraphs of the directed lattice $\widehat{\BH}$ induced by the vertices 
\[\{(2i+j,j) : -M \leq 2i \leq M, 0 \leq j \leq k\}, \ \{(2i-j,j) : -M \leq 2i \leq M, 0 \leq j \leq k\},\] respectively.
We will also consider the translations of the cylinders $\mathcal{C}_{M,k}^l(x)= \mathcal{C}_{M,k}^l+ x - k \theta_r$ and $\mathcal{C}_{M,k}^r(x)= \mathcal{C}_{M,k}^r+ x - k \theta_l$, centered so that the midpoint of the upper side is at $x$.
Note that each cylinder $\mathcal{C}_{M,k}^l(x)$ and $\mathcal{C}_{M,k}^r(x)$ has exactly $M$ vertices in each level.
Top-bottom paths in these cylinders are directed paths of length $k$ going from the top side to the bottom side of the cylinder, that is $\gamma = x_0, x_1, \dots , x_k$, such that $x_i \in  \mathcal{C}_{M,k}^l(x)$ and $x_{i+1} = x_i - \theta_l$ or $x_{i+1} = x_i - \theta_r$ (and similarly for $\mathcal{C}_{M,k}^r(x)$).
See Figure \ref{fig:direct2}.
\begin{figure}
\centering
\includegraphics[width=0.45\textwidth]{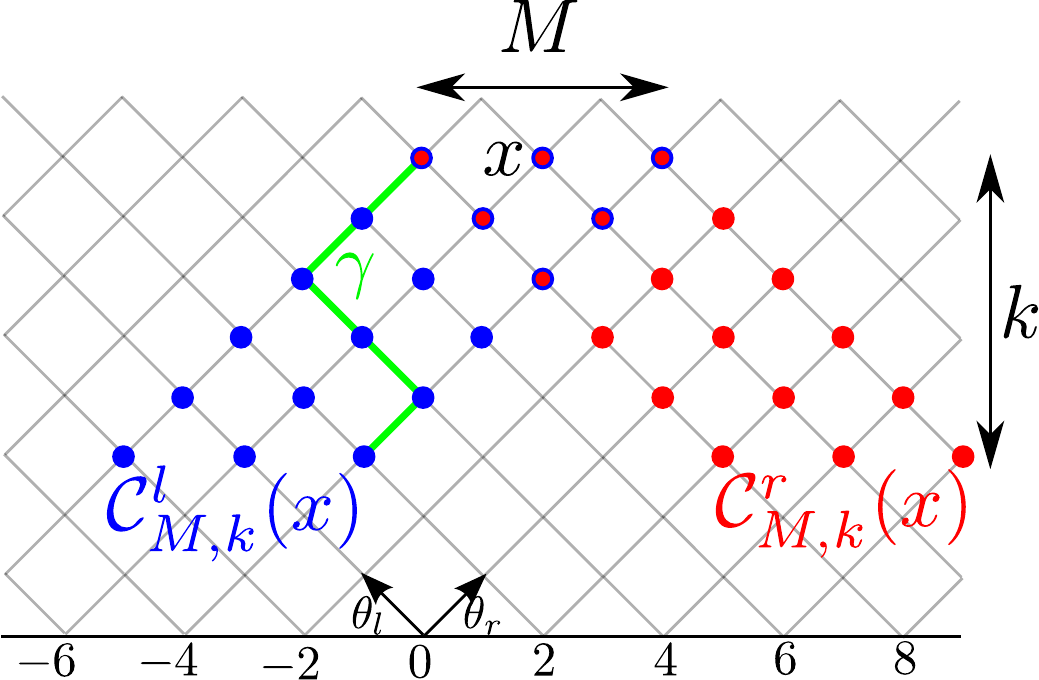}
\caption{Cylinders and Bottom-top paths.\label{fig:direct2}}
\end{figure}

\begin{lemma}\label{lemma:1dfpp}
For a fixed positive integer $M$ let $m_k^{(M)} = \min_\sigma \lambda(\sigma)$, where minimum is taken over all bottom-top paths $\sigma$ in $\mathcal{C}_{M,k}^l$ and all top-bottom paths $\sigma$ in $\mathcal{C}_{M,k}^r$.
Then for every fixed $M$, we have  $\lim_k m_k^{(M)}/k =1$, $\widehat{\prob}$ - almost surely.
\end{lemma}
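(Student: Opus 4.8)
The plan is to prove $\lim_k m_k^{(M)}/k = 1$ by establishing the two inequalities separately: the upper bound $\limsup_k m_k^{(M)}/k \le 1$ is an easy consequence of subadditivity and the law of large numbers, while the lower bound $\liminf_k m_k^{(M)}/k \ge 1$ is the substantive part and will follow from the fact that $m_k^{(M)}$ dominates a sum of i.i.d.\ nonnegative random variables with mean at least $1$, together with a Borel--Cantelli argument to remove the truncation. I will treat only the cylinder $\mathcal{C}_{M,k}^l$ (and bottom-top paths); the case of $\mathcal{C}_{M,k}^r$ with top-bottom paths is identical by the left-right symmetry of the construction.

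For the upper bound, fix any single bottom-top path $\sigma_0$ in $\mathcal{C}_{M,k}^l$, for instance the one that stays as close as possible to a fixed vertical line; then $m_k^{(M)} \le \lambda(\sigma_0)$, which is a sum of $k$ i.i.d.\ edge weights (actually one can take the zig-zag path of the shape used in Proposition \ref{prop:estimates_on_minima}, a sum of $\le k$ weights of mean $1$), so dividing by $k$ and applying the strong law gives $\limsup_k m_k^{(M)}/k \le 1$ almost surely. For the lower bound, the key observation is a renewal-type decomposition: partition the levels $0,1,\dots,k$ into consecutive blocks of a fixed height $h$ (so roughly $k/h$ blocks), and note that any bottom-top path $\sigma$ in $\mathcal{C}_{M,k}^l$ restricts, on each block, to a directed path crossing that block from its top to its bottom. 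Hence $\lambda(\sigma) \ge \sum_{j} \beta_j$, where $\beta_j$ is the minimal passage time of a top-to-bottom crossing of the $j$-th block within the (translated) sub-cylinder, and the $\beta_j$ are i.i.d.\ across $j$ because the blocks use disjoint sets of edges. Therefore $m_k^{(M)} \ge \sum_j \beta_j$, and by the strong law $\liminf_k m_k^{(M)}/(k) \ge \e[\beta_1]/h$. The point is now that $\e[\beta_1] \ge h$: in fact $\beta_1 \ge \min_{|i|\le M} \widehat d_\omega((i,h),\partial\widehat{\BH})$ restricted to the relevant finite region, and since each crossing path has length exactly $h$ and passes through $h$ edges, $\e[\beta_1] \ge h \cdot \e[\min \text{ of finitely many weights summed appropriately}]$ — more cleanly, one compares $\beta_1$ directly: each top-to-bottom crossing of a height-$h$ block is a sum of $h$ i.i.d.\ weights, and minimizing over the finitely many such crossings (there are at most $C(M)\cdot 2^h$ of them) gives a random variable whose expectation, divided by $h$, tends to $1$ as $h \to \infty$ by a standard estimate (the minimum of $m$ i.i.d.\ sums of $h$ mean-one variables has mean $h(1-o(1))$ when $m$ is subexponential in $h$, which holds here since $M$ is fixed). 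So choosing $h$ large we get $\e[\beta_1]/h \ge 1-\epsilon$, hence $\liminf_k m_k^{(M)}/k \ge 1-\epsilon$; letting $\epsilon \to 0$ finishes the lower bound.

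The main obstacle I anticipate is the quantitative claim that $\e[\beta_1]/h \to 1$ as $h\to\infty$, i.e.\ that taking a minimum over the (exponentially-in-$h$ many, but with $M$ fixed the count is $\le C(M)2^h$) directed crossings of a height-$h$ slab does not pull the mean of the minimal crossing time below $h(1-\epsilon)$. This is where the hypotheses on $\mu$ matter: one needs a lower tail estimate showing $\prob[\lambda(\sigma)\le (1-\epsilon)h]$ is exponentially small in $h$ with a rate beating $2^h$, i.e.\ a large-deviation bound $\prob[S_h \le (1-\epsilon)h] \le e^{-c(\epsilon)h}$ for $S_h$ a sum of $h$ i.i.d.\ copies of $\omega(e)$ with $c(\epsilon) > \log 2$ for $\epsilon$ small. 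Such a bound is standard for nonnegative variables with mean $1$ — one uses the Cramér/Chernoff bound $\prob[S_h \le (1-\epsilon)h]\le e^{\theta(1-\epsilon)h}\e[e^{-\theta\omega}]^h$ and checks that the exponential rate $\Lambda^*(1-\epsilon) := \sup_{\theta\ge 0}(\theta(1-\epsilon) - \log\e[e^{-\theta\omega}])$ tends to $+\infty$ (in particular exceeds $\log 2$) as $\epsilon\to 0$, which holds because $\mu$ is non-atomic and has mean $1$ (so $\prob[\omega < 1-\epsilon] < 1$ and the rate does not degenerate). Given this, a union bound over the at most $C(M)2^h$ crossings shows that with overwhelming probability every crossing of every slab has passage time $\ge(1-\epsilon)h$, and a Borel--Cantelli argument over the slabs upgrades the strong-law statement to the almost-sure bound $\liminf_k m_k^{(M)}/k\ge 1-\epsilon$ without needing to carry the truncation; letting $\epsilon\to 0$ along a sequence completes the proof. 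A minor bookkeeping point is that $k$ need not be a multiple of $h$, but the leftover $O(h)$ levels contribute $o(k)$ and are discarded; similarly one restricts attention to even $k$ if parity of the directed lattice requires it, exactly as in Proposition \ref{prop:estimates_on_minima}.
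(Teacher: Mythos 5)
Your overall strategy (block decomposition into independent slabs plus a union bound over the crossings of each slab) could in principle work, but the quantitative claim it rests on is false, and the fix requires exactly the geometric observation you skipped. You bound the number of directed crossings of a height-$h$ slab by $C(M)2^h$ and then assert that the Cram\'er lower-tail rate $\Lambda^*(1-\epsilon)$ ``tends to $+\infty$ (in particular exceeds $\log 2$) as $\epsilon\to 0$''. The opposite is true: the rate function of a mean-one nonnegative variable vanishes at its mean, so $\Lambda^*(1-\epsilon)\to 0$ as $\epsilon\to 0$, and for small $\epsilon$ a union bound over $2^h$ paths is hopeless. This is not a technicality: the minimum of exponentially many sums of $h$ mean-one edge weights genuinely sits at $(1-\epsilon_0)h$ for some fixed $\epsilon_0>0$ --- this is precisely why first-passage time constants lie strictly below the mean, and it is exactly the effect exploited in Proposition \ref{prop:estimates_on_minima} to get $W_n<(1-\kappa)n$ in a width-$3$ strip. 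As written, your argument can only deliver $\liminf_k m_k^{(M)}/k\ge 1-\epsilon_0$ for some fixed $\epsilon_0>0$ depending on the growth rate of the path count, not the limit $1$.

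What saves the lemma is that the cylinders $\mathcal{C}^l_{M,k}$ and $\mathcal{C}^r_{M,k}$ are slanted: writing the level-$j$ vertices of $\mathcal{C}^l_{M,k}$ as $(2i+j,j)$, a downward step sends the relative index $i$ either to $i$ (a $-\theta_r$ step) or to $i+1$ (a $-\theta_l$ step), so $i$ is monotone along any top-bottom path and, being confined to a window of size about $M$, can change at most $M$ times. Hence the number of crossings of a height-$h$ slab is at most $C(M)h^{M}$, polynomial rather than exponential in $h$, and then your union bound does close for every $\epsilon>0$, since any strictly positive Cram\'er rate (which holds for all $\epsilon>0$ because $\omega\ge 0$ has mean $1$) beats a polynomial prefactor. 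With that correction your slab argument becomes a valid alternative to the paper's proof, which encodes the same ``at most $M$ turns'' fact as an induction on $M$: it peels off the outermost $-\theta_r$ column $\tilde{\sigma}_k$ and compares any path entering the inner cylinder at level $p$ with $\lambda(\tilde{\sigma}_k)-\lambda(\tilde{\sigma}_p)+m_p^{(M-1)}$, using only the strong law and no large-deviation estimates.
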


\begin{proof}
The upper bound $\limsup_k m_k^{(M)}/k \leq 1$, $\widehat{\prob}$ - almost surely is obtained trivially by applying the Law of large numbers to any fixed top-bottom path.
Lower bound is proved by induction in $M$.
We focus on the cylinder $\tilde{\mathcal{C}}_{M,k}^l = \mathcal{C}_{M,k}^l - (M',0)$ shifted so that the lower right edge of the cylinder is at the origin $(0,0)$ (so $M' = M$ or $M' = M-1$, depending on the parity of $M$).
For $M=1$, the claim follows by the Law of large numbers.
For induction step, we can focus on providing the lower bounds on the passage time of the lightest top-down path from the top left vertex $(k-M',k)$, since lightest path from other vertices on the top level are completely contained in the cylinder $\tilde{\mathcal{C}}_{M-1,k}^l$, and for them the claim follows by induction hypothesis.
From $(k-M',k)$ we can consider the path $\tilde{\sigma}_k$ following the $-\theta_r$ direction and which never enters the cylinder $\tilde{\mathcal{C}}_{M-1,k}^l$ for which the lower bound is obvious, by the Law of large numbers.
Any other path has to enter the cylinder $\tilde{\mathcal{C}}_{M-1,k}^l$. 
Fix any such $\sigma$ path and assume it enters this cylinder at the level $0 \leq p < k$.
Then we have the inequality 
\[
\lambda(\sigma) \geq \lambda(\tilde{\sigma}_k) - \lambda(\tilde{\sigma}_{p}) + m_p^{(M-1)}.
\]
For a fixed $\epsilon > 0$ and $\sigma$ such that $p \leq \epsilon k$ we have
\[
\frac{\lambda(\sigma)}{k} \geq \frac{\lambda(\tilde{\sigma}_k)}{k} - \frac{\lambda(\tilde{\sigma}_{\epsilon k})}{k}.
\]
As $k \to \infty$, by the Law of large numbers the right hand side converges to $1-\epsilon$.
For $p \geq \epsilon k$, by the induction hypothesis and Law of large numbers we have for $k$ large enough, both $\lambda(\tilde{\sigma}_{p})/p$ and $m_p^{(M)}/p$ are between $1- \epsilon$ and $1 + \epsilon$.
So for $k$ large enough and any $\sigma$ such that  $p \geq \epsilon k$
\[
\frac{\lambda(\sigma)}{k} \geq \frac{\lambda(\tilde{\sigma}_k)}{k} - \frac{2\epsilon p}{k} \geq \frac{\lambda(\tilde{\sigma}_k)}{k} - 2\epsilon.
\]
The claim follows by taking $k \to \infty$, since $\epsilon > 0$ was arbitrary.

\end{proof}



Define a subgraph of $\widehat{\mathbb{H}}$ in the shape of a pentagon as follows. For a vertex $x \in \widehat{\mathbb{H}}$ and integers $M$ and $k$ ($M$ being even) let $\mathcal{P}_{M,k}(x)$ be the subgraph of $\widehat{\mathbb{H}}$ whose set of vertices is enclosed by the five sides:
\begin{align*}
  S_b &= \{x+(2i,0): -M \leq 2i \leq M\}, \text{ bottom side, }\\
  S_{lr}&=\{x+(M,0) + i\theta_r: 0 \leq i \leq k\}, \text{ lower right side, }\\
  S_{ll}&=\{x-(M,0) + i\theta_l: 0 \leq i \leq k\}, \text{ lower left side, }\\
  S_{ur}&=\{x+(M,0) + k\theta_r + i \theta_l: 0 \leq i \leq M+k\}, \text{ upper right side, } \\
  S_{ul}&= \{x-(M,0) + k\theta_l + i \theta_r: 0 \leq i \leq M+k\}, \text{ the upper right side, }
\end{align*}
and includes the vertices on the sides $S_b$, $S_{lr}$, $S_{ll}$, $S_{ur}$ and $S_{ul}$ as well.
Every edge between vertices $w$ and $y$ in $\mathcal{P}_{M,k}(x)$ will be included in the graph  $\mathcal{P}_{M,k}(x)$, if and only if at least one of $w$ and $y$ is not in  $S_b \cup S_{lr} \cup S_{ll} \cup S_{ur} \cup S_{ul}$.
Define the modified edge boundary $\tilde{\partial}\mathcal{P}_{M,k}(x)$ as the set of edges whose both endpoints are in the set $S_{lr} \cup S_{ll} \cup S_{ur} \cup S_{ul}$.
These are exactly the edges which go along the sides $S_{lr}$, $S_{ll}$,  $S_{ur}$ and $S_{ul}$.
See Figure \ref{fig:pent}.
Note that in the special case $k=0$, the pentagon $\mathcal{P}_{M,0}(x)$ collapses into a triangle.

\begin{figure}
\centering
\includegraphics[width=0.5\textwidth]{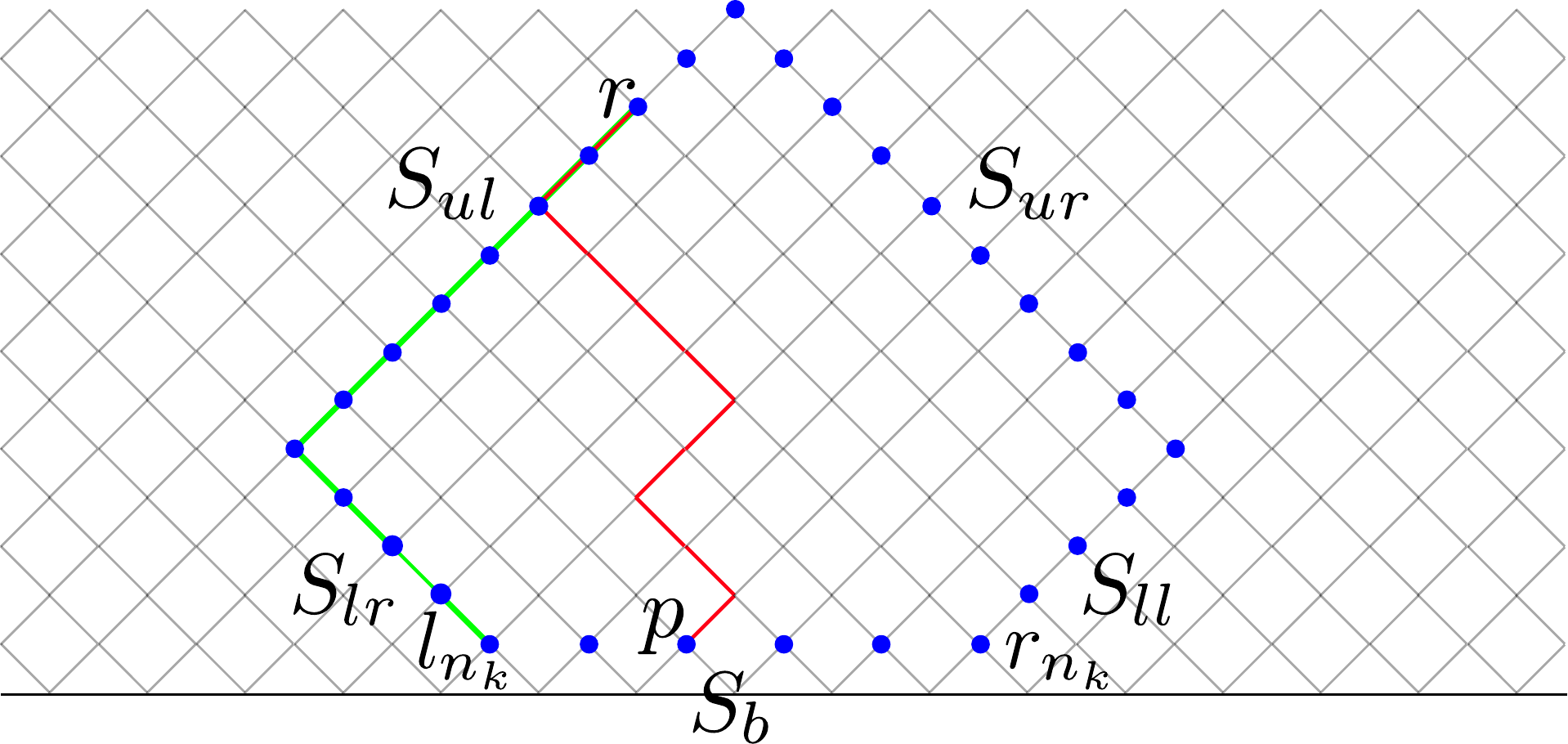}
\caption{Pentagon $\mathcal{P}_{M,k}(x)$.\label{fig:pent}}
\end{figure}

On the event $\widehat{T}^n(0) \neq \emptyset$ denote by $x_n$ the vertex in $\widehat{T}^n(0)$ which minimizes $\widehat{d}_\omega(x, \partial\widehat{\mathbb{H}})$ among all $x \in \widehat{T}^n(0)$.
For integers $n \geq k > 0$ and an even positive integer $M$, let $\mathbf{A}_{n,M,k}$ denote the event that 
\begin{itemize}
	\item[a)] $\displaystyle 1 \leq |\widehat{T}^n(0)| \leq M$ and 
	\item[b)] the geodesics $\widehat{\gamma}(x_n)$  intersects both of  the sets $\{x_n - (i-M, i): 0 \leq i \leq k\}$ and $\{x_n - (-i+M, i): 0 \leq i \leq k\}$.
\end{itemize}
The event in b) can be stated equivalently as the the event that the path consisting of the first $k$ edges of the path  $\widehat{\gamma}(x_n)$ is not completely contained in either of the cylinders $\mathcal{C}_{M-2,k}^l$ and $\mathcal{C}_{M-2,k}^r$.
Think of $k$ above as being significantly larger than $M$.
For such values of $k$, the geodesics $\widehat{\gamma}(x_n)$ will necessarily intersect at least one of the above sets, but it might fail to intersect both if the $\widehat{\gamma}(x_n)$ does not deviate much from the direction $-\theta_l$ or $-\theta_r$ for a significant amount of time.

\begin{proof}[Proof of Theorem \ref{thm:main} one dimensional directed case]
Assume that $\widehat{\p}[|\widehat{T}(0)| = \infty] >0$. First we prove the following claim. 

\textbf{Claim:} There is $\delta > 0$ such that for $M$ large enough there is $k_M$ satisfying 
\[\limsup_n \widehat{\p}[\mathbf{A}_{n,M,k}] \geq\delta, \text{ for all } k \geq k_M.\]

\textbf{Proof of the claim:}
Assume the claim is not true.
Then one can find $M$ and $\delta > 0$ for which there is a sequence $k_\ell \to \infty$ such that for all $\ell \geq 1$ we have $\widehat{\p}[\mathbf{A}_{n,M,k_\ell}]< \delta$ for $n$ large enough. 
Moreover, we can assume that $M$ and $\delta$ satisfy
\begin{equation}\label{eq:estimates_on_delta}
2\delta +M^{-1} < \min\left\{
\kappa^2/4, \widehat{\p}[|\widehat{T}(0)|=\infty]^2
\right\},
\end{equation}
where $\kappa$ is from Proposition \ref{prop:estimates_on_minima}.
Observe that the event $\mathbf{A}_{n,M,k}^c$ implies that $\widehat{\gamma}_k(x_n)$ is contained either in the cylinder $\mathcal{C}_{M,k}^l(x_n)$ or $\mathcal{C}_{M,k}^r(x_n)$.
Let $\mathbf{B}_{n,M,k}$ denote the event that 
\begin{itemize}
	\item $1 \leq |\widehat{T}^n(0)| \leq M$,
	\item there is an $x \in \widehat{T}^n(0)$ such that the minimal top-bottom path in one of the cylinders $\mathcal{C}_{M,k}^l(x)$ or $\mathcal{C}_{M,k}^r(x)$ has weight at most $(1-2\delta)k$.
\end{itemize}
We use Lemma \ref{lemma:ergodic_with_conditioning_general} in the case when the events $A_S$ are defined so that for all $x \in S$ minimal top-bottom path in one of the cylinders $\mathcal{C}_{M,k}^l(x)$ or $\mathcal{C}_{M,k}^r(x)$ has weight at least $(1-2\delta)k$.
By Lemma \ref{lemma:1dfpp}, we can find $k_0$ so that for $n \geq k_0$, we have $\widehat{\p}[\mathbf{B}_{n,M,k_0}] \leq  \delta$.
We can choose $k_0$ so that $\widehat{\p}[\mathbf{A}_{n,M,k_0}]< \delta$ for $n$ large enough. 
Observe that $\widehat{d}_\omega(x_{n}, \partial \widehat{\mathbb{H}}) - \widehat{d}_\omega(x_{n-k_0}, \partial \widehat{\mathbb{H}}) \leq (1-2\delta)k_0$ implies that $\lambda(\widehat{\gamma}_{k_0}(x_n)) \leq (1-2\delta)k_0$.
Therefore, we have the inclusion
\[
\{1 \leq |\widehat{T}^n(0)|  \leq M, \widehat{d}_\omega(x_{n}, \partial \widehat{\mathbb{H}}) - \widehat{d}_\omega(x_{n-k_0}, \partial \widehat{\mathbb{H}}) \leq (1-2\delta)k_0 \} \subset \mathbf{A}_{n,M,k_0} \cup \mathbf{B}_{n,M,k_0}.
\]
By the bounds on the probability of $\mathbf{A}_{n,M,k_0}$ and $\mathbf{B}_{n,M,k_0}$, and Lemma \ref{lemma:ergodic} we now have
\[\widehat{\p}\left[\widehat{T}^n(0) \neq \emptyset, \widehat{d}_\omega(x_{n}, \partial \widehat{\mathbb{H}}) - \widehat{d}_\omega(x_{n-k_0}, \partial \widehat{\mathbb{H}}) \leq (1-2\delta)k_0\right] \leq 2\delta + M^{-1}.\]
Now applying Lemma \ref{lemma:martingale_argument} with $A = \{|\widehat{T}(0)|= \infty\}$ and $X_n = \widehat{d}_\omega(x_n, \partial\widehat{\mathbb{H}})\mathbf{1}_{\{\widehat{T}^n(0) \neq \emptyset\}}$ yields that
\[
\widehat{\p}\Big[|\widehat{T}(0)| = \infty, \limsup_n \frac{\widehat{d}_\omega(x_n, \partial\widehat{\mathbb{H}})}{n} \geq 1-2\sqrt{2\delta+M^{-1}}\Big] = \widehat{\p}[|\widehat{T}(0)| = \infty]-\sqrt{2\delta+M^{-1}} >0.
\]
Since $1-2\sqrt{2\delta+M^{-1}} > 1-\kappa$, this gives a contradiction with Proposition \ref{prop:estimates_on_minima}.

For the basis of the construction of the events $A_k$ in Lemma \ref{lemma:killing_argument} take an even $M$ such that for any $n$ \[\widehat{\p}[|\widehat{T}^n(0)|\geq M] \leq \frac{1}{M} \leq \frac{\delta\widehat{\p}[\widehat{T}^n(0) \neq \emptyset]}{6},\]
(guaranteed by Lemma \ref{lemma:ergodic}), where $\delta$ is as above. 
For this particular value of $M$, find $k_0$ and a subsequence $n_\ell$ such that  $\widehat{\p}[\mathbf{A}_{n_\ell,M,k_0}] \geq \delta$. For these values of $M$ and $k_0$ choose $0 < \xi < \esssup \omega(e)$ such that the event $\mathbf{C}_{n,M,k_0}$ defined as 
\begin{itemize}
\item $1 \leq |\widehat{T}^n(0)| \leq M$, 
\item $\omega(e) <\xi$ for all $e \in \mathcal{C}_{M,k_0}^l(x) \cup \mathcal{C}_{M,k_0}^r(x)$ and all  $x \in \widehat{T}^n(0)$
\end{itemize}
has probability at least
\[
\widehat{\p}[1 \leq |\widehat{T}^n(0)| \leq M] - \frac{\delta}{3}\widehat{\p}[\widehat{T}^n(0) \neq \emptyset],
\] 
for every $n> k_0$. 
The existence of such $\xi$ is guaranteed by Lemma \ref{lemma:ergodic_with_conditioning_general} and since $\widehat{\p}[|\widehat{T}(0)| = \infty]>0$ gives a lower bound on the probabilities $\widehat{\p}[\widehat{T}^n(0) \neq \emptyset]$.
Finally define the event
\[
A_\ell =  \mathbf{A}_{n_\ell,M,k_0} \cap \mathbf{C}_{n_\ell,M,k_0} \in\mathfs{F}_{n_\ell}.
\]
Since both $A_{n_\ell,M,k_0}$ and $C_{n_\ell,M,k_0}$ are contained in $\{\widehat{T}^{n_\ell}(0) \neq \emptyset\}$, the union bound gives
$\widehat{\p}[A_\ell] \geq \delta \widehat{\p}[\widehat{T}^{n_\ell}(0) \neq \emptyset]/3$.
To apply Lemma \ref{lemma:killing_argument} and finish the proof we only need to observe that for an appropriately chosen $N$ the probabilities $\widehat{\p}[\widehat{T}^{n_\ell+N}(0) = \emptyset|A_\ell]$ are bounded away from zero uniformly in $\ell$. The rest of the proof is devoted to this.

First define the points $y^l_{n_\ell}$ and $y^r_{n_\ell}$ as $y^l_{n_\ell} = x_{n_\ell} - (M,0)$ and $y^r_{n_\ell} = x_{n_\ell} + (M,0)$ (recall that $M$ is chosen to be even).
Observe that on the event $A_\ell$ we have $y^l_{n_\ell} \notin \widehat{T}(0)$ and $y^r_{n_\ell} \notin \widehat{T}(0)$.
Furthermore, on the event $A_\ell$ the path $\widehat{\gamma}_{k_0}(x_{n_\ell})$ intersects the sides of the cylinders $\{y^l_{n_\ell} - i\theta_l : 0 \leq i \leq k_0\}$ and $\{y^r_{n_\ell} - i\theta_r : 0 \leq i \leq k_0\}$, so choose points $q_l \in \widehat{\gamma}_{k_0}(x_{n_\ell}) \cap \{y^l_{n_\ell} - i\theta_l : 0 \leq i \leq k_0\}$ and $q_r \in \widehat{\gamma}_{k_0}(x_{n_\ell}) \cap \{y^r_{n_\ell} - i\theta_r : 0 \leq i \leq k_0\}$.
By the definition of the event $A_\ell$ we have that both $\widehat{d}_\omega(y^l_{n_\ell}, \partial \widehat{\mathbb{H}})$ and $\widehat{d}_\omega(x_{n_\ell}, \partial \widehat{\mathbb{H}})$ are between $\widehat{d}_\omega(q_{l}, \partial \widehat{\mathbb{H}})$ and $\widehat{d}_\omega(q_{l}, \partial \widehat{\mathbb{H}}) + \xi k_0$ and so by symmetry
\begin{equation}\label{eq:estimates_before_killing}
|\widehat{d}_\omega(y^l_{n_\ell}, \partial \widehat{\mathbb{H}}) - \widehat{d}_\omega(x_{n_\ell}, \partial \widehat{\mathbb{H}})| \leq 2\xi k_0 \text{ and } |\widehat{d}_\omega(y^r_{n_\ell}, \partial \widehat{\mathbb{H}}) - \widehat{d}_\omega(x_{n_\ell}, \partial \widehat{\mathbb{H}})| \leq 2\xi k_0.
\end{equation}
Next take $\xi_1 < \xi_2$ such that both probabilities $\widehat{\p}(\omega(e) > \xi_2)$ and $\widehat{\p}(\omega(e) < \xi_1)$ are positive, and take a positive integer $k_1$ such that $k_1 > 2\xi k_0/(\xi_2-\xi_1)$.

Consider the pentagon $\mathcal{P}_{M,k_1}(x_{n_\ell})$ and denote its lower left, lower right upper left and upper right sides with $S_{ll}$, $S_{lr}$, $S_{ul}$, $S_{ur}$.
Consider the event
\[
\mathbf{D}_{M,k_1}(x_{n_\ell})  = \{\omega(e) > \xi_2 : e \in \mathcal{P}_{M,k_1}(x_{n_\ell})\} \bigcap \{\omega(e) < \xi_1 : e \in \tilde{\partial}\mathcal{P}_{M,k_1}(x_{n_\ell})\},
\]
that is we require all the edges in $\mathcal{P}_{M,k_1}(x_{n_\ell})$ to be heavier than $\xi_2$ and all the edges on the lower left, lower right, upper left and upper right sides of $\mathcal{P}_{M,k_1}(x_{n_\ell})$ to be lighter than $\xi_1$.
Obviously, for fixed values of $M,k_1,\xi_1,\xi_2$, on the event $A_\ell$ the conditional probability $\widehat{\p}[\mathbf{D}_{M,k_1}(x_{n_\ell})|\mathfs{F}_{n_\ell}]$ is bounded away from zero, uniformly in $\ell$.
We show that given $A_\ell$, on the event $\mathbf{D}_{M,k_1}(x_{n_\ell})$ we have $(S_{ul}\cup S_{ur}) \cap \widehat{T}(0) = \emptyset$.
Then $\widehat{T}^{n_\ell+N}(0) = \emptyset$ for $N= k_1+M$, since otherwise for any $y \in \widehat{T}^{n_\ell+N}(0)$, the path $\widehat{\gamma}(y)$ intersects $\widehat{T}^{n_\ell}(0)$, and in particular either $S_{ul}$ or $S_{ur}$.
Thus for $N= k_1+M$ we get the lower bound 
\[
\widehat{\p}[\widehat{T}^{n_\ell+N}(0) = \emptyset|A_\ell] \geq \widehat{\p}[\mathbf{D}_{M,k_1}(x_{n_\ell})|A_\ell] >0,
\]
which is uniform in $\ell$.
The claim then follows by Lemma \ref{lemma:killing_argument}.

Assume the contrary, that for some $z \in S_{ul}$ we have $\{z \in \widehat{T}(0)\} \cap A_\ell \cap \mathbf{D}_{M,k_1}(x_{n_\ell}) \neq \emptyset$.
On the intersection of these events denote $p = \widehat{T}^{n_k}(0)\cap \widehat{\gamma}(z)$. The part of the geodesics $\widehat{\gamma}(z)$ between the points $p$ to $z$  contains at least $k_1$ edges from  $\mathcal{P}_{M,k_1}(x_{n_\ell})$, the other edges might be a part of the side $S_{ul}$.
Considering the path from $y^l_{n_\ell}$ to $z$ following the edges of $S_{ll}$ and $S_{ul}$ it is an easy observation that on the event $\mathbf{D}_{M,k_1}(x_{n_\ell})$ we have
\[\widehat{d}_\omega(y^l_{n_\ell},z) \leq \widehat{d}_\omega(p,z) - k_1(\xi_2-\xi_1).\]
Considering an even $i \neq 0$ such that $y^l_{n_\ell} \in \widehat{T}(i)$ observe that
\begin{multline*}
\widehat{d}_\omega(z,(i,0)) \leq \widehat{d}_\omega(z,y^l_{n_\ell}) + \widehat{d}_\omega(y^l_{n_\ell},(i,0)) \leq \widehat{d}_\omega(p,z) -k_1(\xi_2-\xi_1) + \widehat{d}_\omega(y^l_{n_\ell},\partial\widehat{\mathbb{H}}) \\
\leq \widehat{d}_\omega(p,z) -k_1(\xi_2-\xi_1) + \widehat{d}_\omega(x_{n_\ell},\partial\widehat{\mathbb{H}}) + 2\xi k_0 < \widehat{d}_\omega(p,z) + \widehat{d}_\omega(p,\partial\widehat{\mathbb{H}})  = \widehat{d}_\omega(z,0).
\end{multline*}
This gives the contradiction. The fact that $S_{ur} \cap \widehat{T}(0) = \emptyset$ follows by symmetry.

\end{proof}

\subsection{Undirected case}

Let $n$ be a positive integer and $I$ a subset of consecutive vertices on the level $n$, that is $I = \{(i_l,n), (i_l+1,n), \dots (i_r,n)\}$.
Let  $\mathcal{R}_{I,k}$ be the rectangle with base $I \cup \{(i_l-1,n),(i_r+1,n)\}$ and of height $k$, and $S_{b}$, $S_l$, $S_r$ and $S_u$ the bottom, the left, the right and the upper side of $\mathcal{R}_{I,k}$.
More precisely, define $S_{b}$, $S_l$, $S_r$ and $S_u$ to be subgraphs with the sets of vertices
\begin{align*}
& I \cup \{(i_l-1,n),(i_r+1,n)\}, \ \{(i_l-1,j) : n \leq j \leq n+k-1\}, \\ & \{(i_r+1,j) : n \leq j \leq n+k-1\}, \ \{(i,n+k-1) : i_l-1 \leq i \leq i_r+1\},
\end{align*}
respectively.
Set $S_{b}$, $S_l$, $S_r$ and $S_u$ to be the subgraphs induced by their respective sets of vertices, that is they contain all edges between any two of their vertices.
Now define  $\mathcal{R}_{I,k}$ as a subgraph with the set of vertices \[\{(i,j): i_l-1 \leq i \leq i_r+1, n \leq j \leq n+k-1\},\] and include in  $\mathcal{R}_{I,k}$ all edges  $e=(x,y)$ between two vertices $x$ and $y$ of $\mathcal{R}_{I,k}$ such that $e \notin S_{b} \cup S_l \cup S_r \cup S_u$.
Define a modified boundary of $\mathcal{R}_{I,k}$ to be the union of subgraphs $\tilde{\partial} \mathcal{R}_{I,k} = S_l \cup S_r \cup S_u$, that is we don't include the bottom side in the boundary.

\begin{proof}[Proof of Theorem \ref{thm:main} one dimensional undirected case]
Again we apply Lemma \ref{lemma:killing_argument}.
Assume that $\overline{\p}[|\overline{T}(0)|=\infty] >0$. 
By Lemma \ref{lemma:tree_levels} we have that \[\overline{\p}\Big[\bigcap_n\{\overline{T}^n(0) \neq \emptyset\}\Big] = \overline{\p}[|\overline{T}(0)|=\infty] >0.\]
By Lemma \ref{lemma:ergodic} for $M$ large enough and any $n$ we have $\overline{\p}[0 < |\overline{T}^n(0)| \leq M] \geq \overline{\p}[|\overline{T}(0)|=\infty]/2$.
On the event $\overline{T}^n(0) \neq \emptyset$, denote the vertices $\overline{T}^n(0)$ by $(j,n)$ for $j_l \leq j \leq j_r$ and define $y^l_n = (j_l-1,n)$ and $y^r_n= (j_r+1,n)$.
Denote $i_l, i_r \in \partial \overline{\BH}$ such that $y^l_n \in \overline{T}^n(i_l)$ and $y^r_n \in \overline{T}^n(i_r)$.
By definition, $i_l \neq 0$ and $i_r \neq 0$, however note that without further assumptions we can not claim that $y^l_n$ and $y^r_n$ are not in $\overline{T}(0)$.
Now by Lemma \ref{lemma:ergodic_with_conditioning_general} we can find positive real numbers $\xi$ and $\delta$ such that for every $n$ the event $A_n$ defined as
\begin{itemize}
	\item $0 < |\overline{T}^n(0)| \leq M$,
	\item $\omega(e) < \xi$, for all horizontal edges  $e$  with at least one endpoint in $\overline{T}^n(0))$,
\end{itemize}
has probability at least $\delta$.
Without loss of generality we can assume that there are numbers $\xi_1$ and $\xi_2$ such that $\xi_1< \xi < \xi_2$ and such that both probabilities $\overline{\p}(\omega(e)<\xi_1)$ and $\overline{\p}(\omega(e)> \xi_2)$ are positive.
We will show that for an appropriate choice of  $N$, the probabilities $\overline{\p}[\overline{T}^{n+N}(0) = \emptyset|A_n]$ are uniformly bounded away from zero, which  by Lemma \ref{lemma:killing_argument} proves the claim.
Fix an integer $N$ with the property that
\[
N > \frac{(\xi + \xi_1)M + 3\xi}{\xi_2 - \xi_1}.
\]
Consider the rectangle $\mathcal{R}_{\overline{T}^n(0),N}$, and the event
\[
R_{n,N} = \{\omega(e) < \xi_1, \text{ for all } e \in \tilde{\partial} \mathcal{R}_{\overline{T}^n(0),N}\} \cap \{\omega(e) > \xi_2, \text{ for all } e \in \mathcal{R}_{\overline{T}^n(0),L}\}.
\]
Given the event $A_n$ the width of $\overline{T}^n(0)$ is bounded by $M+2$, and so the event $R_{n,N}$ puts constraints on weights of less than $2(M+2)N$ edges.
Thus $\overline{\p}[R_{n,N}|A_n] > \delta$, for some $\delta>0$ and all positive integers $n$.
Next we prove that on the event $R_{n,N} \cap A_n$ we necessarily have 
$\overline{T}^{n+N}(0) = \emptyset$. By Lemma \ref{lemma:killing_argument} this will finish the proof.

Assume that there is a vertex $x \in  \overline{T}^{n+N}(0)$. 
Denote the vertices in geodesics $\overline{\gamma}_{\Lambda_{n+N}}(x)$ by $\overline{\gamma}_{\Lambda_{n+N}}(x) = x_0, x_1, \dots , x_k$, and $x_i = (m_i,j_i)$, so that $x_0 = x$, $|x_{i-1} - x_i| =1$ and $x_k = (0,0)$.
Take index $i_1$ so that $x_{i_1}$ is on level $n$, $x_{i_1-1}$ is on level $n+1$, and $x_i$ does not go above level $n$ for $i > i_1$. 
More precisely, $j_{i_1-1} = n+1$, $j_{i_1} =n$ and $j_i \leq n$ for all $i \geq i_1$.
Then it is a simple observation that $x_{i_1} \in \overline{T}^n(0)$.
Assume that there is an index $i_2 < i_1$ such that $x_{i_2}$ is also on level $n$, that is $j_{i_2} = n$. 
Take the largest such index $i_2$, that is $j_i > n$ for all $i_2 < i < i_1$.
Then observe that we have one of two possibilities:
\begin{itemize}
	\item  either $x_{i_2} \in \overline{T}^n(0)$ and all edges in the part of $\overline{\gamma}_{\Lambda_{n+N}}(x)$ between the vertices $x_{i_2}$ and $x_{i_1}$ are in $\mathcal{R}_{\overline{T}^n(0),N}$, or
	\item  for some index $i_0$ such that $i_2 < i _0< i_1$ the point $x_{i_0}$ is on the boundary $\tilde{\partial}\mathcal{R}_{\overline{T}^n(0),N}$.
\end{itemize}
The first scenario is impossible, since  the part of the geodesic $\overline{\gamma}_{\Lambda_{n+N}}(x)$ between the points $x_{i_2}$ and $x_{i_1}$ would have the weight at least $\xi_2 |i_1-i_2|$, while connecting the points $x_{i_2}$ and $x_{i_1}$ with the horizontal line (with all the edges on the $n$-th level) has the smaller weight of at most $\xi |i_1-i_2|$.
Therefore, we know the second scenario holds, and take the largest index $i_0$ such that $i_2 < i_0 < i_1$ and  $x_{i_0} \in \tilde{\partial}\mathcal{R}_{\overline{T}^n(0),N}$.
By the choice of $i_0$ it is clear that all the vertices and edges in $\overline{\gamma}_{\Lambda_{n+N}}(x)$ between $x_{i_0}$ and $x_{i_1}$ are contained in the rectangle $\mathcal{R}_{\overline{T}^n(0),N}$.
Denote the points $z_1 = x_{i_0}$ and $z_2 = x_{i_1}$.


Observe that the part of $\overline{\gamma}_{\Lambda_{n+N}}(x)$ appearing after $z_2$ coincides with the geodesic $\overline{\gamma}_{\Lambda_{n+N}}(z_2) = \overline{\gamma}_{\Lambda_{n}}(z_2)$.
In particular $\overline{d}_{\omega,\Lambda_{n+N}}(z_2,\partial \overline{\BH}) = \overline{d}_{\omega,\Lambda_{n}}(z_2,\partial \overline{\BH})$.
Connecting $y^l_n$ to $z_2$ by the shortest horizontal path and then using $\overline{\gamma}_{\Lambda_{n}}(z_2)$ to connect to $\partial \overline{\BH}$ yields
\begin{equation}\label{eq:comparing_end_paths}
\overline{d}_{\omega,\Lambda_{n}}(y^l_n,i_l)  < \xi|y^l_n-z_2| + \overline{d}_{\omega,\Lambda_{n}}(z_2, \partial \overline{\BH}).
\end{equation}

Assuming that $z_1$ is on the left side of the the rectangle, $z_1 \in S_l$,  observe that the part of the geodesic $\overline{\gamma}_{\Lambda_{n+N}}(x)$ between $z_1$ and $z_2$ has at least $|z_1-y^l_n| + |y^l_n-z_2|$ edges in $\mathcal{R}_{\overline{T}^n(0),N}$, so on the event $R_{n,N} \cap A_n$, the weight of this path is at least $\xi_2(|z_1-y^l_n| + |y^l_n-z_2|)$. Therefore,
\begin{equation}\label{eq:connecting_through_the_last_point_on_level_n_left}
\overline{d}_{\omega,\Lambda_{n+N}}(z_1,\partial \overline{\BH}) \geq \xi_2(|z_1-y^l_n| + |y^l_n-z_2|) + \overline{d}_{\omega,\Lambda_n}(z_2, \partial \overline{\BH}).
\end{equation}
On the other hand, the shortest path connecting $z_1$ to $y^l_n$ has weight at most $\xi_1|z_1-y^l_n|$. Then traversing the shortest path connecting $y^l_n$ to $i_l$ which stays below level $n$ and making use of \eqref{eq:comparing_end_paths} gives
\begin{align*}
\overline{d}_{\omega,\Lambda_{n+N}}(z_1,i_l) & \leq \xi_1|z_1-y^l_n| + \overline{d}_{\omega,\Lambda_n}(y^l_n,i_l) \\
& < \xi_1|z_1-y^l_n| + \xi|y^l_n-z_2| +\overline{d}_{\omega,\Lambda_n}(z_1, \partial \overline{\BH}).
\end{align*}
This gives the contradiction with \eqref{eq:connecting_through_the_last_point_on_level_n_left}. The case $z_1 \in S_r$ is handled in the same way by replacing the role of $y^l_n$ with $y^r_n$.

The case $z_1 \in S_u$, when $z_1$ is on the upper side of the rectangle is handled analogously.
Observe that now the part of the geodesics $\overline{\gamma}_{\Lambda_{n+N}}(x)$ connecting $z_1$ with $z_2$ has at least $N$ edges in $\mathcal{R}_{\overline{T}^n(0),N}$ so \eqref{eq:connecting_through_the_last_point_on_level_n_left} is replaced by $\overline{d}_{\omega,\Lambda_{n+N}}(z_1,\partial \overline{\BH}) \geq \xi_2 N + \overline{d}_{\omega,\Lambda_n}(z_2, \partial \overline{\BH})$. On the other hand, the shortest path connecting $z_1$ to $y^l_n$ with the edges in $\tilde{\partial} \mathcal{R}_{\overline{T}^n(0),N}$ which run along the upper and then along the left side of the rectangle has weight at most $\xi_1 (M+N+2)$. As in the previous case this yields
\begin{align*}
\overline{d}_{\omega,\Lambda_{n+N}}(z_1,i_l) & \leq \xi_1(M+N+2) + \overline{d}_{\omega}(y^l_n,i_l)\\
& < \xi_1(M+N+2) + \xi(M+1) + \overline{d}_{\omega,\Lambda_n}(z_2, \partial \overline{\BH}).
\end{align*}
Thus we obtain $\xi_1(M+N+2) + \xi(M+1) \geq \xi_2 N$, which is false by the assumption on $N$.
This gives the contradiction and finishes the proof.
\end{proof}

\section{General base graphs}\label{sec:general}

In this section we prove Theorem \ref{thm:main} in the general case, where $G$ is a Cayley graph of a finitely generated countable group. Recall that $d(\cdot,\cdot)$ is the graph metric in $G$ and $B_R(x)=\{y \in G : d(x,y) \le R\}$. 
We will use the notation $\mathring{B}_R(x)$ for the open ball in $G$ of radius $R$  around $x$, that is $\mathring{B}_R(x) = \{y \in G : d(x,y) < R\}$. 
By $S_R(x)$ we will also denote the sphere in $G$ of radius $R$ around $x$, that is $S_R(x) = \{y \in G : d(x,y) = R\}$. 
Also, we will denote the projection of elements of $G \times \mathbb{Z}^+$ onto $G$ by  $\mathfs{P}(x,n) = x$.

\subsection{Directed case for general graphs}

Observe that any path in $G$ of length $k$ between $x$ and $y$ can be lifted to a directed path from $(x,n+k)$ to $(y,n)$ in $\widehat{G}$.
In particular, any closed path in $G$ of length $k$ containing a vertex $x \in G$, can be lifted to a path between $(x,n+k)$ and $(x,n)$.
In $G$ there is certainly a closed path of length $k$ for any even $k$, so for even $k$ there is a directed path between $(x,n+k)$ and $(x,n)$.
If $G$ is bipartite, then actually $(x,n)$ and $(x,n+k)$ are different components of $\widehat{G}$ for odd $k$.
For a non-bipartite graph $G$, let $m$ denote the length of the shortest closed path of odd length. 
Then any $k \geq 2m-1$ can be written as a sum of a non-negative multiple of $m$ and a non-negative multiple of 2, and thus for any $k \geq 2m-1$ there is a closed path in $G$ of length $k$.
Thus if $G$ is non-bipartite, for $k$ large enough there is a directed path between $(x,n+k)$ and $(x,n)$ for any $x$ and $n$.
The smallest such $k$ we denote by $\mu_G$.

For a fixed vertex $x$ of $G$ let $m_n = \widehat{\e}[\widehat{d}_\omega((x,n),G)]$ denote the expected passage time from the vertex $(x,n)$ to the base graph $G$.
By stationarity, $m_n$ does not depend on the choice of $x \in G$.



\begin{lemma}\label{lemma:vertical_growth}
Let $k$ be a positive integer, which we assume to be even if $G$ is bipartite.
For any $\epsilon > 0$ there exists an  $K>0$ (depending on $\epsilon$ and $k$) such that for any $x \in G$ and any positive integer $n$ we have
\[
\widehat{\p}[ \widehat{d}_\omega((x,n+k),G) - \widehat{d}_\omega((x,n),G) < -K ] \leq \epsilon.
\]
\end{lemma}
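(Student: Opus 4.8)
\emph{Reduction to a horizontal comparison.} Since every directed edge of $\widehat{G}$ drops the level by one, a directed geodesic from $(x,n+k)$ to $G$ descends monotonically and therefore crosses $G_n$ in a single vertex $(p,n)$; the segment from $(x,n+k)$ to $(p,n)$ has exactly $k$ edges, so $p\in B_k(x)$ and $p$ is joined to $x$ by a $G$-walk of length $k$ (this is why $k$ is taken even when $G$ is bipartite). Because a sub-path of a geodesic is a geodesic, the portion of this path below $G_n$ realizes $\widehat{d}_\omega((p,n),G)$, whence
\[
\widehat{d}_\omega((x,n+k),G)\ \ge\ \widehat{d}_\omega((p,n),G)\ \ge\ \min_{y\in B_k(x)}\widehat{d}_\omega((y,n),G).
\]
Thus $\{\widehat{d}_\omega((x,n+k),G)-\widehat{d}_\omega((x,n),G)<-K\}\subset\bigcup_{y\in B_k(x)}\{\widehat{d}_\omega((x,n),G)-\widehat{d}_\omega((y,n),G)>K\}$. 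As $B_k(x)$ is finite and, by translation invariance of $\widehat{\p}$, the law of $\widehat{d}_\omega((x,n),G)-\widehat{d}_\omega((y,n),G)$ depends only on $d(x,y)$, it suffices to prove that for each fixed $y$ with $d(x,y)=d\le k$ (with $d$ even when $G$ is bipartite)
\[
\sup_n\,\widehat{\p}\big[\widehat{d}_\omega((x,n),G)-\widehat{d}_\omega((y,n),G)>K\big]\ \xrightarrow[K\to\infty]{}\ 0,
\]
and then choose $K$ large enough to handle the finitely many values of $d$ at once.

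\emph{The local comparison.} Lifting a $G$-geodesic from $x$ to $y$ produces a directed path from $(x,n)$ to $(y,n-d)$ of exactly $d$ edges, so $\widehat{d}_\omega((x,n),G)\le\widehat{d}_\omega((x,n),(y,n-d))+\widehat{d}_\omega((y,n-d),G)$, and the first term is bounded by a sum of $d$ i.i.d.\ weights, whose tails do not depend on $n$. The remaining task is to bound $\widehat{d}_\omega((y,n-d),G)-\widehat{d}_\omega((y,n),G)$ from above uniformly in $n$. For this I would use the geodesic $\widehat{\gamma}$ from $(y,n)$ to $G$, writing $(q_m,m)$ for its vertex on level $m$ and $D_j=d(y,q_{n-j})$; the tail of $\widehat{\gamma}$ from $(q_m,m)$ is again a geodesic, so $\widehat{d}_\omega((q_m,m),G)\le\widehat{d}_\omega((y,n),G)$ for all $m<n$. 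If the first steps of $\widehat{\gamma}$ do not run away from $y$ at full speed, i.e. if $T:=\min\{j:D_j\le j-d\}<\infty$ is bounded, then one can join $(y,n-d)$ to $(q_{n-T},n-T)$ by a lifted $G$-walk of only $T-d$ edges, and the two displays above give $\widehat{d}_\omega((y,n-d),G)-\widehat{d}_\omega((y,n),G)\le(\text{sum of }T-d\text{ weights})$, again with $n$-free tails; a separate, elementary treatment covers small $n$ (where $T$ is automatically bounded by $n$).

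\emph{Main obstacle.} The real difficulty is precisely the complementary regime, where $\widehat{\gamma}$ travels at nearly full speed the whole way, i.e. its root $r$ satisfies $d(y,r)\ge n-d$ and its $G$-projection is a length-$n$ walk staying within $d$ of geodesic speed. In the directed model there are no horizontal edges, so such a "straight-running" geodesic cannot be overtaken by a detour of bounded size, and the naive surgery iterates with an error that grows linearly in $n$. My plan for this case is to shadow $\widehat{\gamma}$: from $(y,n-d)$ follow a vertical shift of the first $n-d$ edges of $\widehat{\gamma}$ (which land on $G$), control the discrepancy from $\widehat{d}_\omega((y,n),G)$ using that the two $G$-projections agree off $O(d)$ coordinates, and combine this with the monotonicity $m_j\le m_{j+k}\le m_j+k\int x\,\mu(dx)$ of $m_n=\widehat{\e}[\widehat{d}_\omega((x,n),G)]$, which itself follows from the crossing argument ($\widehat{d}_\omega((x,n+k),G)\ge\widehat{d}_\omega((x,n+k),G\times\{k\})$) together with vertical shift invariance. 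I expect this regime, along with the parity bookkeeping required when $G$ is bipartite, to be where essentially all of the technical effort goes; the reduction of the first paragraph is routine.
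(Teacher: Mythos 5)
There is a genuine gap, and it comes in two places. First, your reduction is effectively circular: after the crossing argument you are left with showing that $\widehat{d}_\omega((y,n-d),G)-\widehat{d}_\omega((y,n),G)$ has upper tails controlled uniformly in $n$, but writing $m=n-d$ this is precisely the event $\{\widehat{d}_\omega((y,m+d),G)-\widehat{d}_\omega((y,m),G)<-K\}$, i.e.\ the statement of the lemma itself (indeed, the same-level comparison over $B_k(x)$ that you reduce to is the content of the paper's Lemma \ref{lemma:not_too_light_neighborhood}, which is \emph{deduced from} the present lemma, not the other way around). Second, the plan you offer for the ``main obstacle'' regime does not work as described: a vertical shift of the first $n-d$ edges of $\widehat{\gamma}$ traverses entirely different edges, whose weights are i.i.d.\ and independent of $\lambda(\widehat{\gamma})$ along that stretch, so the weight of the shadow path differs from $\widehat{d}_\omega((y,n),G)$ by fluctuations of order $\sqrt{n}$ rather than $O(d)$; and the monotonicity $m_j\le m_{j+k}$ of the \emph{means} cannot by itself be converted into a pointwise or probability estimate for this discrepancy. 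So the part you flag as ``where essentially all of the technical effort goes'' is in fact the whole lemma, and it is left unproved.

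The frustrating thing is that you already hold both ingredients of the actual proof. Since $k$ is even (or, in the non-bipartite case, after replacing $k$ by $k+\mu_G$), there is a directed path $\widehat{\sigma}$ of $k$ edges from $(x,n+k)$ to $(x,n)$, giving the pointwise bound $\widehat{d}_\omega((x,n+k),G)-\widehat{d}_\omega((x,n),G)\le\lambda(\widehat{\sigma})$, a sum of $k$ i.i.d.\ weights with mean independent of $n$; and you noted $m_n\le m_{n+k}$. Combining them through the expectation of the difference: on $A_{n,k;K}=\{\widehat{d}_\omega((x,n+k),G)-\widehat{d}_\omega((x,n),G)<-K\}$ the difference is at most $-K$, and on the complement it is at most $\lambda(\widehat{\sigma})\ge 0$, so $0\le m_{n+k}-m_n\le -K\,\widehat{\p}[A_{n,k;K}]+\widehat{\e}[\lambda(\widehat{\sigma})]$, i.e.\ $\widehat{\p}[A_{n,k;K}]\le k\,\widehat{\e}[\omega(\widehat{e})]/K$, uniformly in $n$ and $x$. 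No analysis of geodesic geometry, shadowing, or same-level comparison is needed; the only residual case (non-bipartite $G$ with $k<\mu_G$) is handled by splitting the event through the level $n+k+\mu_G$ and applying Markov's inequality to the $\mu_G$-edge connecting path.
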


\begin{proof}
First observe that $m_n$ is an increasing sequence in $n$.
Recalling $G_k = G\times \{k\}$, this follows from a rather obvious inequality $\widehat{d}_\omega((x,n+k),G_k) < \widehat{d}_\omega((x,n+k),G)$, since then
\[m_n = \widehat{\e}[\widehat{d}_\omega((x,n+k),G_k)] < \widehat{\e}[\widehat{d}_\omega((x,n+k),G)] = m_{n+k}.\]

Denote the event $\{\widehat{d}_\omega((x,n+k),G) - \widehat{d}_\omega((x,n),G) < -K\}$ from the statement by $A_{n,k;K}$.
Now assume that there is a directed path between $(x,n+k)$ and $(x,n)$.
Take such a directed path $\widehat{\sigma}$ from $(x,n+k)$ to $(x,n)$, and let $\lambda(\widehat{\sigma}) = \sum_{\widehat{e} \in \widehat{\sigma}}\omega(\widehat{e})$ be the sum of weights of all edges in $\widehat{\sigma}$ and observe that
\[
\widehat{d}_\omega((x,n+k),G) - \widehat{d}_\omega((x,n),G) \leq \lambda(\widehat{\sigma}).
\]
One can bound $\widehat{d}_\omega((x,n+k),G) - \widehat{d}_\omega((x,n),G)$ from above by $-K$ on the event $A_{n,k;K}$ and by $\lambda(\widehat{\sigma})$ on  $A_{n,k;K}^c$. 
Applying the expectation to this inequality and using the fact that $\lambda(\widehat{\sigma})$ is always positive, we get
\[
0 \leq m_{n+k} - m_n = \widehat{\e}[\widehat{d}_\omega((x,n+k),G) - \widehat{d}_\omega((x,n),G)] \leq -K\widehat{\p}[A_{n,k;K}] + \widehat{\e}[\lambda(\widehat{\sigma})],
\]
which gives $\widehat{\p}[A_{n,k;K}] \leq \widehat{\e}[\lambda(\widehat{\sigma})]/K$.
Since $\widehat{\e}[\lambda(\widehat{\sigma})]$ does not depend on $n$ or $K$, this completes the proof when $G$ is bipartite, and for all $k \geq \mu_G$, when $G$ is not bipartite.
When $G$ is not bipartite and $k < \mu_G$, observe that the event $\{\widehat{d}_\omega((x,n+k),G) - \widehat{d}_\omega((x,n),G) \leq -K\}$ is contained in the union of the events 
\[\{\widehat{d}_\omega((x,n+k+\mu_G),G) - \widehat{d}_\omega((x,n),G) \leq -K/2\}\]
and 
\[\{\widehat{d}_\omega((x,n+k+\mu_G),G) - \widehat{d}_\omega((x,n+k),G) \geq K/2\}.\]
The probability of the first event is less than $\epsilon/2$ for $K$ large enough by the proven part of the lemma.
To bound the probability of the second event by $\epsilon/2$, use the fact that $(x,n+k)$ and $(x,n+k+ \mu_G)$ can be connected by a path $\widehat{\sigma}'$ of length $\mu_G$, and so, similarly as above, the second event implies that $\lambda(\widehat{\sigma}') \geq K/2$.
Now the desired bound for the second event follows by Markov inequality for $K$ large enough.
\end{proof}



If $G$ is a non-bipartite graph, let $B_R'(x)=B_R(x)$ denote the graph-metric ball of radius $R$ around $x \in G$, and if $G$ is a bipartite graph require additionally that $d(x,y)$ is even, that is $B_R'(x) = \{y \in G : d(x,y) \leq R, \text{ and } d(x,y) \text{ is even}\}$.
For a fixed vertex $\widehat{x} = (x,n)$ set $D_{\widehat{x},R;K}$ to be the event for which 
\begin{equation}\label{eq:bad_vertices}
|\widehat{d}_\omega((y,n),G) - \widehat{d}_\omega(\widehat{x},G)| \leq K,
\end{equation}
for all $y \in B_R'(x)$.

\begin{lemma}\label{lemma:not_too_light_neighborhood}
For any $R>0$ and $\epsilon >0$, there exists $K>0$ such that $\widehat{\p}[D_{\widehat{x},R;K}] \geq 1- \epsilon$, for any $\widehat{x} \in \widehat{G}$.
\end{lemma}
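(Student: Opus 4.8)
The plan is to reduce the statement, via Markov's inequality and a union bound, to a uniform‑in‑$n$ bound on one–sided first moments. Write $\widehat{x}=(x,n)$ and $F_{\widehat{z}}=\widehat{d}_\omega(\widehat{z},G)$. It suffices to produce a constant $C=C(R,G)$ (also depending on $\ev[\omega(\widehat{e})]$) with
\[
\widehat{\e}\big[(F_{(y,n)}-F_{(x,n)})^+\big]\le C\qquad\text{and}\qquad \widehat{\e}\big[(F_{(x,n)}-F_{(y,n)})^+\big]\le C
\]
for all $n$ and all $y\in B_R'(x)$. Granting this, Markov gives $\widehat{\p}[\,|F_{(y,n)}-F_{(x,n)}|>K\,]\le 2C/K$, and since $|B_R'(x)|\le\phi_G(R)$ a union bound over $y\in B_R'(x)$ bounds $\widehat{\p}[D_{\widehat{x},R;K}^c]$ by $2C\phi_G(R)/K$; taking $K=2C\phi_G(R)/\epsilon$ finishes the proof, the resulting $K$ depending only on $R,\epsilon,\mu$ and $G$, and in particular not on $\widehat{x}$.

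For the moment bound, fix $y\in B_R'(x)$ and choose a positive integer $\ell$, at most $R$ if $G$ is bipartite and at most $R+\mu_G$ otherwise, such that $G$ admits a walk of length $\ell$ from $y$ to $x$, a walk of length $\ell$ from $x$ to $y$, and closed walks of length $\ell$ based at $x$ and at $y$; one may take $\ell=d(x,y)$ in the bipartite case ($d(x,y)$ being even on $B_R'(x)$) and $\ell=d(x,y)+\mu_G$ otherwise. Assume first $n\ge\ell$, fix such walks once and for all, and lift them as explained at the start of this subsection: the walk from $y$ to $x$ lifts to a directed path $\widehat{\sigma}$ from $(y,n)$ to $(x,n-\ell)$, and the closed walk at $x$ lifts to a directed path $\widehat{\rho}$ from $(x,n)$ to $(x,n-\ell)$, whence
\[
F_{(y,n)}\le\lambda(\widehat{\sigma})+F_{(x,n-\ell)},\qquad F_{(x,n)}\le\lambda(\widehat{\rho})+F_{(x,n-\ell)} .
\]
Since $\widehat{\sigma}$ and $\widehat{\rho}$ are fixed, $\widehat{\e}[\lambda(\widehat{\sigma})]=\widehat{\e}[\lambda(\widehat{\rho})]=\ell\,\ev[\omega(\widehat{e})]$. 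Put $A=F_{(x,n-\ell)}$ and $B=F_{(x,n)}$. The second inequality gives $(B-A)^+\le\lambda(\widehat{\rho})$, and combining the identity $(A-B)^+=(A-B)+(B-A)^+$ with $\widehat{\e}[A]=m_{n-\ell}\le m_n=\widehat{\e}[B]$ (the sequence $m_n$ being non‑decreasing, as shown in the proof of Lemma \ref{lemma:vertical_growth}) yields $\widehat{\e}[(A-B)^+]=(m_{n-\ell}-m_n)+\widehat{\e}[(B-A)^+]\le\ell\,\ev[\omega(\widehat{e})]$. From the first inequality, $F_{(y,n)}-F_{(x,n)}\le\lambda(\widehat{\sigma})+(A-B)\le\lambda(\widehat{\sigma})+(A-B)^+$, so $\widehat{\e}[(F_{(y,n)}-F_{(x,n)})^+]\le 2\ell\,\ev[\omega(\widehat{e})]$. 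For the finitely many $n<\ell$ the same bound is trivial, bounding $F_{(y,n)}$ by the weight of the lift of any length‑$n$ walk out of $y$ (of mean $n\,\ev[\omega(\widehat{e})]\le\ell\,\ev[\omega(\widehat{e})]$) and using $F_{(x,n)}\ge0$. The companion bound for $(F_{(x,n)}-F_{(y,n)})^+$ follows verbatim after interchanging $x$ and $y$, noting that $x\in B_R'(y)$ and that $m_n$ does not depend on the base point. Thus $C=2(R+\mu_G)\,\ev[\omega(\widehat{e})]$ works (with $\mu_G$ deleted in the bipartite case).

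The only genuine obstacle is the uniform‑in‑$n$ control of $\widehat{\e}[(A-B)^+]$, i.e.\ of the horizontal fluctuation of the point‑to‑$G$ passage time across levels. The naive recursion — bounding the horizontal fluctuation at level $n$ by that at level $n-\ell$ plus an $O(1)$ term — accumulates over the $\asymp n/\ell$ levels and is useless. The step above sidesteps the recursion entirely: the \emph{deterministic} one‑sided inequality $B-A\le\lambda(\widehat{\rho})$, coming from a short vertical connecting path, together with the monotonicity of the means $m_n$, pins $\widehat{\e}[(A-B)^+]$ down directly. Everything else — the parity/odd‑girth bookkeeping in the choice of $\ell$, the degenerate range $n<\ell$, and the passage from the two one‑sided first‑moment bounds to the probability estimate — is routine.
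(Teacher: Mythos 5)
Your proof is correct and follows essentially the same route as the paper's: both arguments lift short walks in $G$ (with the same parity/odd-girth bookkeeping via $\mu_G$) to directed connecting paths, and both exploit the monotonicity of $m_n$ together with Markov's inequality and a union bound over $B_R'(x)$. The only difference is one of packaging — you inline the mechanism behind Lemma \ref{lemma:vertical_growth} as a first-moment bound on $\bigl(\widehat{d}_\omega((x,n-\ell),G)-\widehat{d}_\omega((x,n),G)\bigr)^+$ and compare both $(y,n)$ and $(x,n)$ downward to level $n-\ell$, whereas the paper sandwiches $\widehat{d}_\omega((y,n),G)$ between the passage times from $(x,n-k)$ and $(x,n+k)$ and invokes Lemma \ref{lemma:vertical_growth} as a black box.
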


\begin{proof}
Fix $\widehat{x} = (x,n)$.
Observe that there exists a positive integer $k$, which depends only on $R$,  such that for each $y \in B_R'(x)$ there is a directed path $\widehat{\sigma}_y^\downarrow$ path from $(y,n)$ to $(x,n-k)$ (assuming that $n \geq k$).
If $G$ is bipartite simply take $k \geq R$ to be even, and if $G$ is not bipartite take $k=R+ \mu_G$.
Mirroring the path $\widehat{\sigma}_y^\downarrow$ above the level $n$, we obtain a directed path  $\widehat{\sigma}_y^\uparrow$ from $(x,n+k)$ to $(y,n)$.
Since the size of $B_R'(x)$  is bounded and depends only on $R$, by a union bound we can find $K_1>0$ such that with probability at least $1-\epsilon/2$ we have both  $\lambda(\widehat{\sigma}_y^\downarrow) \leq K_1$ and $\lambda(\widehat{\sigma}_y^\uparrow) \leq K_1$, for all $y \in B_R'(x)$.
Next by Lemma \ref{lemma:vertical_growth} we can find $K_2>0$ such that with probability at least $1-\epsilon/2$ we have both  $\widehat{d}_\omega((x,n),G) \geq \widehat{d}_\omega((x,n-k),G)-K_2$ and $\widehat{d}_\omega((x,n+k),G) \geq \widehat{d}_\omega((x,n),G)-K_2$.
Since 
\[\widehat{d}_\omega((x,n+k),G) - \lambda(\widehat{\sigma}_y^\uparrow) \leq \widehat{d}_\omega((y,n),G) \leq \widehat{d}_\omega((x,n-k),G) + \lambda(\widehat{\sigma}_y^\downarrow),\]  
the intersection of the above two events implies that 
\[
|\widehat{d}_\omega((y,n),G)- \widehat{d}_\omega(\widehat{x},G)| \leq K_1 + K_2,
\]
for all $y \in B'_R(x)$.
This yields the claim for $K= K_1+K_2$, and for $n \geq k$.
Since the value of $k$ depends only on the graph $G$ and $R$, the cases $n < k$ are handled by increasing $K$ if necessary.
\end{proof}

\begin{remark}\label{rem:dead_ends}
It is well known that in general Cayley graphs, one can find paths starting at a fixed vertex, which can not be extended (e.g.~Lamplighter graphs).
In other words there are vertices $x$ and $y$ such that every neighbor $z$ of $y$ satisfies $d(x,z) \leq d(x,y)$,
and therefore, for $y \in \mathring{B}_R(x)$, we might have $d(y,S_R(x)) > R - d(x,y)$.
However, note that by backtracking from $y$ to $x$ and then moving to $S_R(x)$, there is path from $y$ to $S_R(x)$ of length $R + d(x,y) < 2R$.
In particular, for every $y \in \mathring{B}_R(x)$ and every $k \geq 2R$ of the same parity as $d(y,S_R(x))$, there is a path from $y$ to a point on $S_R(x)$.
To handle the parity issue, observe that for every $y \in \mathring{B}_R(x)$ and every $k \geq 2R$, there is a path from $y$ to a point on $S_R(x) \cup S_{R+1}(x)$.
Observe that for any $R \leq k \leq 3R$, for which $k-R$ is even and any $y \in S_{k}(x)$ there is a path of length $2R$ between $y$ and a point in $S_R(x)$.
Again the parity assumption on $k-R$ can be dropped if we consider the paths to points in $S_R(x) \cup S_{R+1}(x)$ instead of $S_R(x)$.
Combining the above observations, we see that for every $R>0$ and every $y \in B_{3R+1}(x)$ there is a path of length $2R$ from $y$ to a point in $S_R(x) \cup S_{R+1}(x)$.
\end{remark}

For a fixed vertex $\widehat{x} = (x,n)$ of $\widehat{G}$ and positive integers $R$ and $L$, we consider several sets of vertices.
The following set of vertices is considered as the set of (lower) interior vertices:
\[ \mathcal{V}_{i}(\widehat{x};R,L) = \bigcup_{j=0}^{L}\mathring{B}_{R+j}(x)\times \{n+j\}.\]
The set of interior edges $\mathcal{E}_{i}(\widehat{x};R,L)$ is the set of all edges with at least one endpoint in $\mathcal{V}_{i}(\widehat{x};R,L)$ and at least one endpoint strictly above the level $n$.
We will also consider the lower and the upper boundary vertices:
\begin{itemize}
\item $\displaystyle \mathcal{V}_{lb}(\widehat{x};R,L) = \bigcup_{j=0}^{L}\bigl(S_{R+j}(x) \cup S_{R+j+1}(x)\bigr)\times \{n+j\}$,
\item $\displaystyle \mathcal{V}_{ub}(\widehat{x};R,L) = \bigcup_{j=1}^{2(R+L)}\bigl(B_{R+L+j+1}(x) \backslash \mathring{B}_{R+L-j}(x) \bigr)\times \{n+L+j\}$,
\end{itemize}
where $\mathring{B}_{r}(x) = \emptyset$, for $r \leq 0$.
The boundary vertex set is then $\mathcal{V}_{b}(\widehat{x};R,L) = \mathcal{V}_{lb}(\widehat{x};R,L) \cup \mathcal{V}_{ub}(\widehat{x};R,L)$, and the boundary edge set 
$\mathcal{E}_{b}(\widehat{x};R,L)$ is the set of edges whose both endpoints are in the set $\mathcal{V}_{b}(\widehat{x};R,L)$.

The following technical lemma provides the core of the blocking path argument.
We state and prove it separately, to make the proof of the main result more readable.
First observe that for a bipartite graph $G$ and two of it's vertices $x$ and $y$, either all the paths from $y$ to $S_R(x)$ have odd length, or all the paths from $y$ to $S_R(x)$ have even length (depending on the parity of $d(x,y)+R$).
In the latter case we will say that $y$ and $S_R(x)$ have the same parity.
Note that for any $R$ exactly one of $S_R(x)$ or $S_{R+1}(x)$ has the same parity as $y$.

\begin{lemma}\label{lemma:general_directed_killing_argument}
For a fixed vertex $\widehat{x} = (x,n) \in \widehat{G}$, and fixed positive integers $R$ and $L$, let $\alpha < \beta$ and $K$ be three positive real numbers satisfying
\begin{equation}\label{eq:general_directed_killing_numerical_condition}
L\beta  > (2R+3L)\alpha +K.
\end{equation}
Assume that $\omega(\widehat{e}) < \alpha$ for all $\widehat{e} \in \mathcal{E}_{b}(\widehat{x};R,L)$, and that $\omega(\widehat{e}) > \beta$ for all $\widehat{e} \in \mathcal{E}_{i}(\widehat{x};R,L)$.
Assume that for some $y \in \mathring{B}_R(x)$, the vertex $\widehat{y}= (y,n)$ satisfies $\widehat{d}_\omega(\widehat{y}',G) \leq \widehat{d}_\omega(\widehat{y},G) + K$, for all $\widehat{y}' = (y',n)$ of the form
\begin{itemize}
\item $y' \in S_R(x) \cup S_{R+1}(x)$, if $G$ is non-bipartite,
\item $y' \in S_R(x)$ if $G$ is bipartite and $y$ and $S_R(x)$ have the same parity,
\item $y' \in S_{R+1}(x)$ if $G$ is bipartite and $y$ and $S_{R+1}(x)$ have the same parity.
\end{itemize}
Then there is no vertex $\widehat{z}$ on the level $n+2R+3L$ whose geodesic $\widehat{\gamma}(\widehat{z})$ contains the point $\widehat{y}$.
\end{lemma}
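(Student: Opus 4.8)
The plan is to argue by contradiction, exactly in the spirit of the one-dimensional directed proof and of Lemma~\ref{lemma:general_directed_killing_argument}'s own geometry: suppose $\widehat{z}$ sits on level $n+2R+3L$ and $\widehat{y} \in \widehat{\gamma}(\widehat{z})$. Since $\widehat{\gamma}(\widehat{z})$ is a directed geodesic from $\widehat{z}$ down to $G$ passing through $\widehat{y}=(y,n)$ with $y \in \mathring{B}_R(x)$, and since to reach level $n$ from level $n+2R+3L$ the path must traverse $2R+3L$ levels, I want to show the portion of $\widehat{\gamma}(\widehat{z})$ strictly above level $n$ is forced to live among the ``interior'' edges $\mathcal{E}_i(\widehat{x};R,L)$ where all weights exceed $\beta$, and hence is too heavy. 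The key combinatorial point, using Remark~\ref{rem:dead_ends}, is that a directed path from level $n+2R+3L$ down to the point $\widehat{y}$ must pass through the upper boundary shell $\mathcal{V}_{ub}$ and then, to descend to $\mathring{B}_R(x)$ at level $n$, either stay inside the interior vertex set $\mathcal{V}_i$ or exit through the lower boundary shell $\mathcal{V}_{lb}$ — the definitions of these sets are arranged precisely so that the projections onto $G$ of the levels of $\mathcal{V}_i$ are open balls of slowly growing radius, and anything outside them belongs to a boundary shell. So $\widehat{\gamma}(\widehat{z})$ restricted to levels in $(n, n+2R+3L]$ decomposes into a piece through $\mathcal{E}_i$ of length at least $L$ (since to get from the upper shell at level $\geq n+L$ down to level $n$ while projecting into $\mathring{B}_{R+j}(x)$ at level $n+j$ one accumulates at least $L$ interior steps) and possibly boundary pieces; this forces $\widehat{d}_\omega(\widehat{y},G) \geq \widehat{d}_\omega(\widehat{z},\text{level }n) \geq$ something, but that is the wrong direction.

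Let me reorganize: the right comparison is between two ways of routing a path from $\widehat{z}$ to $G$. The actual geodesic $\widehat{\gamma}(\widehat{z})$ goes through $\widehat{y}$, so $\widehat{d}_\omega(\widehat{z},G) = \widehat{d}_\omega(\widehat{z},\widehat{y}) + \widehat{d}_\omega(\widehat{y},G)$, and the first term is at least the weight of the interior portion, which I will bound below by $L\beta$ using that any descent from $\widehat{z}$ (level $n+2R+3L$) to a point projecting into $\mathring{B}_R(x)$ at level $n$ must cross at least $L$ interior edges (those joining level $n+j$ to $n+j+1$ for $j$ in a range of size $\geq L$, with endpoints projecting into the ``core'' $\mathring{B}_{R+j}(x)$ rather than a shell). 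For the competing upper bound I build an explicit cheap path from $\widehat{z}$ to $G$: route $\widehat{z}$ through the boundary edges $\mathcal{E}_b$ — available by Remark~\ref{rem:dead_ends}, which guarantees a length-$2R$ path from any vertex of $B_{3R+1}$ to the sphere $S_R \cup S_{R+1}$, and analogous connectivity down the shells — reaching some $\widehat{y}' = (y',n)$ with $y' \in S_R(x) \cup S_{R+1}(x)$ (respecting parity in the bipartite case), at cost $< (2R+3L)\alpha$ since at most $2R+3L$ boundary edges are used; then continue along the near-optimal path of length $\leq \widehat{d}_\omega(\widehat{y}',G)$, which by the hypothesis on $\widehat{y}$ is at most $\widehat{d}_\omega(\widehat{y},G)+K$. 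Altogether this competitor has weight $< (2R+3L)\alpha + \widehat{d}_\omega(\widehat{y},G) + K$, while the true geodesic through $\widehat{y}$ weighs at least $L\beta + \widehat{d}_\omega(\widehat{y},G)$. The numerical condition \eqref{eq:general_directed_killing_numerical_condition}, $L\beta > (2R+3L)\alpha + K$, makes the competitor strictly cheaper, contradicting optimality of $\widehat{\gamma}(\widehat{z})$.

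The steps, in order: (1) fix the hypothetical $\widehat{z}$ and $\widehat{y}$ and record $\widehat{d}_\omega(\widehat{z},G) = \widehat{d}_\omega(\widehat{z},\widehat{y}) + \widehat{d}_\omega(\widehat{y},G)$; (2) prove the lower bound $\widehat{d}_\omega(\widehat{z},\widehat{y}) \geq L\beta$ by a counting argument showing that every directed path from level $n+2R+3L$ to a point of $\mathring{B}_R(x) \times\{n\}$ uses at least $L$ edges of $\mathcal{E}_i(\widehat{x};R,L)$ — this is where the bookkeeping of the shell radii $R+j$ versus $R+L+j+1$ and $R+L-j$ must be checked carefully, and I expect this to be the main obstacle, because one has to rule out the path ``escaping'' the core through the lower shell and coming back, using only the monotone structure of directed paths (each step changes the level by exactly one) and the fact that graph distance in $G$ changes by at most one per step; (3) construct the competitor path explicitly via Remark~\ref{rem:dead_ends}, going up/over through $\mathcal{V}_{ub}$ and down through $\mathcal{V}_{lb}$ to hit $S_R(x)\cup S_{R+1}(x)$ at level $n$ within $2R+3L$ boundary edges, handling the bipartite parity cases using the remark's refinement to $S_R \cup S_{R+1}$; (4) invoke the hypothesis $\widehat{d}_\omega(\widehat{y}',G) \leq \widehat{d}_\omega(\widehat{y},G)+K$ to bound the tail, sum the two pieces, and compare with \eqref{eq:general_directed_killing_numerical_condition} to reach the contradiction. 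The parity analysis in the bipartite case and the precise verification that the boundary path of length $\leq 2R+3L$ actually exists and stays within $\mathcal{E}_b(\widehat{x};R,L)$ are the fiddly points, but they are forced by how $\mathcal{V}_{ub}$ and $\mathcal{V}_{lb}$ were defined.
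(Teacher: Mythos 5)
Your proposal follows essentially the same route as the paper's proof: lower-bound the geodesic through $\widehat{y}$ by $L\beta + \widehat{d}_\omega(\widehat{y},G)$ (the last $L$ edges before $\widehat{y}$ are forced into $\mathcal{E}_{i}(\widehat{x};R,L)$ because a directed path descends one level per step, so its vertex at level $n+i$ projects into $B_i(y)\subset\mathring{B}_{R+i}(x)$), and upper-bound $\widehat{d}_\omega(\widehat{z},G)$ by a $(2R+3L)$-edge detour through $\mathcal{V}_{ub}$ and $\mathcal{V}_{lb}$ reaching some $\widehat{w}'$ over $S_R(x)\cup S_{R+1}(x)$, then invoke the hypothesis and \eqref{eq:general_directed_killing_numerical_condition}. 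The step you flag as the main obstacle is in fact immediate for the reason just given (no ``escape and return'' needs to be ruled out, since the path terminates at $\widehat{y}$ and proximity to $y$ forces membership in the core), and the bipartite parity point is settled exactly as you suggest, by noting that the concatenation of the two paths projects to a walk of even length $2(2R+3L)$ from $y$ to $w'$.
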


\begin{proof}
If indeed there is a vertex $\widehat{z} = (z,n+2R + 3L)$ for which $\widehat{y} \in \widehat{\gamma}(\widehat{z})$ then we necessarily have $z \in \mathring{B}_{3R+3L}(x)$, so in particular $\widehat{z} \in \mathcal{V}_{ub}(\widehat{x};R,L)$.
If we denote the first $2R+3L$ vertices in the projection $\mathfs{P}(\widehat{\gamma}(\widehat{z}))$ by $z = z_0^1, z_1^1, \dots, z_{2R+3L}^1 = y$ then 
$z_{2R+3L-i}^1 \in B_{i}(y)$.
In particular, for $0 \leq i < L$ we have $z_{2R+3L-i}^1 \in \mathring{B}_{R+i}(x)$, so the last $L$ vertices (last $L$ edges) in the part of the geodesics $\widehat{\gamma}(\widehat{z})$ between $\widehat{z}$ and $\widehat{y}$ are in $\mathcal{V}_{i}(\widehat{x};R,L)$ ($\mathcal{E}_{i}(\widehat{x};R,L)$).
This yields
\begin{equation}\label{eq:general_directed_killing_numerical_condition_1}
\widehat{d}_\omega(\widehat{z},G) \geq \widehat{d}_\omega(\widehat{y},G) + L\beta.
\end{equation}
Since $z \in \mathring{B}_{3R+3L}(x)$, by Remark \ref{rem:dead_ends} one can find a path $\sigma$ in $G$ between $z$ and some $w \in S_{R+L}(x)\cup S_{R+L+1}(x)$ of length $2(R+L)$.
If we denote the vertices in the path $\sigma$ by $z = z_0^2, z_1^2, \dots, z_{2(R+L)}^2 = w$, it is clear that $z_{2(R+L)-i}^2 \in B_{i}(w)$.
In particular, this yields $z_{2(R+L)-i}^2 \in B_{R+L+i+1}(x) \backslash \mathring{B}_{R+L-i}(x)$.
Lift the path $\sigma$ to a path $\widehat{\sigma}$ from $\widehat{z}$ to $(w,n+L)$.
Now all the vertices in $\widehat{\sigma}$ are contained in the set $\mathcal{V}_{ub}(\widehat{x};R,L)$ so $\lambda(\widehat{\sigma}) \leq 2\alpha (R+L)$.
Consider a path of length $L$ which starts at $w$, always decreases the distance from $x$ and ends at $w' \in S_R(x) \cup S_{R+1}(x)$.
The $i$-th vertex of this path is in $S_{R+L-i}(x) \cup S_{R+L-i+1}(x)$.
Therefore, a lift of this path yields a path connecting $\widehat{w}$ to a point in $(S_R(x) \cup S_{R+1}(x)) \times \{n\}$ with all the vertices in $\mathcal{V}_{lb}(\widehat{x};R,L)$.
Using this path to extend $\widehat{\sigma}$ yields a path connecting $\widehat{z}$ and the point $\widehat{w}' = (w',n)$ of total $\lambda$ weight of at most $\alpha(2R+3L)$.
If $G$ is not bipartite then $\widehat{d}_\omega(\widehat{w}',G) \leq \widehat{d}_\omega(\widehat{y},G) + K$.
If $G$ is bipartite, then consider the union of the projections of the considered path between $\widehat{z}$ and $\widehat{y}$ and the constructed path between $\widehat{z}$ and $\widehat{w'}$.
This union defines a path between $y$ and $w'$ of even length $2(2R+3L)$, so $y$ and $w'$ are of the same parity.
Therefore, we again have $\widehat{d}_\omega(\widehat{w}',G) \leq \widehat{d}_\omega(\widehat{y},G) + K$.
This yields
\[
\widehat{d}_\omega(\widehat{z},G) \leq \widehat{d}_\omega(\widehat{y},G) + (2R+3L)\alpha - K.
\]
However, this combined with \eqref{eq:general_directed_killing_numerical_condition_1} yields the contradiction with \eqref{eq:general_directed_killing_numerical_condition}.

\end{proof}

\begin{proof}[Proof of Theorem \ref{thm:main} in the general directed case]
Without loss of generality we can assume that the bottom of the  support of $\omega(\widehat{e})$ is at 0.
It suffices to show that for each $x \in G$ we have $h(\widehat{T}(x))< \infty$ almost surely.
Assume the opposite $\widehat{\p}[h(\widehat{T}(x))=\infty] >0$ and fix $\epsilon < \widehat{\p}[h(\widehat{T}(x))=\infty]/4$.
Take $M>0$ such that $\widehat{\p}[|\widehat{T}^n(x)| > M] \leq \epsilon$.
For a fixed $R>0$ take $K_R>0$ so that 
\[\widehat{\p}[D_{\widehat{y},R;K_{R}}] \geq 1- \frac{\epsilon}{M+1}\] 
holds for every $\widehat{y}$. 
For a given $R$ fix a positive integer $L_R$, as well as positive real numbers $\alpha_R$ and $\beta_R$ so that the probabilities $\widehat{\p}[\omega(\widehat{e})>\beta_R]$ and $\widehat{\p}[\omega(\widehat{e})<\alpha_R]$ are both strictly positive and
\[
L_R\beta_R  > (2R+3L_R)\alpha_R +2K_R.
\]
This is possible since the lower edge of the support of $\omega(\widehat{e})$ is at 0.
Without loss of generality we can assume that $K_R$, $L_R$ and $\beta_R$  are all non-decreasing in $R$, while $\alpha_R$ is non-increasing in $R$.
Construct a sequence of even positive integers $(R_i)_{i \geq 1}$ as $R_1=2$ and $R_{i + 1} = 6(R_i + L_i+1)$, where $L_i = L_{R_i+1}$.
Also set $K_i = K_{R_i+1}$, as well as $\alpha = \alpha_{R_M+1}$ and $\beta = \beta_{R_M+1}$.
Now it is clear that for every $1 \leq i \leq M$,
\begin{equation}\label{eq:directed_general_right_cuttoffs}
L_i\beta  > (2R_i +2 +3L_i)\alpha +2K_i,
\end{equation}
Observe that by the union bound the event $\widetilde{D}_{\widehat{y}} = \cap_{i=1}^{M+1}D_{\widehat{y},R_i+1;K_i}$ has probability at least $1- \epsilon$ for any $\widehat{y}$.
Now apply the mass transport principle by sending a unit mass from $y$ to $x$ if $(y,n) \in \widehat{T}^n(x)$ and if the event $\widetilde{D}_{\widehat{y}}$ fails. 
By the above bound, the expected mass sent out of a vertex $y$ is at most $\epsilon$. 
The expected mass received by the vertex $x$ is an upper bound for the probability that there is a vertex $\widehat{y}=(y,n) \in \widehat{T}^n(x)$ for which $\widetilde{D}_{\widehat{y}}$ fails.
By the mass transport principle, we have
\[
\widehat{\p}[\widehat{T}^n(x) \neq \emptyset, \cup_{\widehat{y} \in \widehat{T}^n(x)}\widetilde{D}_{\widehat{y}}^c] \leq \epsilon.
\]
Consider the event
\[
A_n = \bigl(\cap_{\widehat{y} \in \widehat{T}^n(x)}\widetilde{D}_{\widehat{y}}\bigr) \bigcap \{1 \leq |\widehat{T}^n(x)| \leq M\}.
\]
Clearly $A_n \subset \{\widehat{T}^n(x) \neq \emptyset\}$, and the above bounds imply that $\widehat{\p}[\widehat{T}^n(x) \neq \emptyset] - \widehat{\p}[A_n] \leq 2\epsilon$.
Since $\epsilon < \widehat{\p}[\widehat{T}^n(x)\neq \emptyset]/4$, we obtain $\widehat{\p}[A_n] \geq \widehat{\p}[\widehat{T}^n(x) \neq \emptyset]/2$.
We will show that on the event $A_n$ we have 
\begin{equation}\label{eq:killing_estimate_general_directed}
\widehat{\p}[\widehat{T}^{n+2R_M + 3L_{M}}(x) = \emptyset|\mathfs{F}_n] \geq \delta > 0,
\end{equation}
where $\delta$ does not depend on the choice of $x$ or $n$.
Then by  Lemma \ref{lemma:killing_argument} the claim will follow.
Therefore, the rest of the proof is devoted to the proof of \eqref{eq:killing_estimate_general_directed}.

To end this we will construct a bounded number of $\mathcal{V}_i$, $\mathcal{V}_{lb}$ and $\mathcal{V}_{ub}$ type sets, of bounded radii, which are disjoint and enclose in their lower faces all the points from $\widehat{T}^n(x)$.
Then we will use the event from Lemma \ref{lemma:general_directed_killing_argument} to perform the path blocking.
To start, fix any ordering of the vertices of $G$.
On the event $A_n$ we necessarily have $1 \leq |\widehat{T}^n(x)| \leq M$, so by the pigeonhole principle for each $\widehat{y} = (y,n) \in \widehat{T}^n(x)$ we can find an index $1 \leq j_y \leq M$ so that for each $z \in \mathring{B}_{R_{j_y+1}}(y) \backslash \mathring{B}_{R_{j_y}}(y)$ we have $(z,n) \notin \widehat{T}^n(x)$ (take for example the smallest such $1 \leq j_y \leq M$ for each $y$).
Now order the vertices of $\mathfs{P}(\widehat{T}^n(x))$ in the decreasing order of the indices $j_y$, that is for $(y_1,n), (y_2,n) \in \widehat{T}^n(x)$ set $y_1 \preceq y_2$ if $j_{y_1} > j_{y_2}$. 
If $j_{y_1} = j_{y_2}$ then use the fixed ordering of the vertices in $G$, that is $y_1 \preceq y_2$ if $y_1$ comes before $y_2$ in the ordering of the vertices of $G$.
Let $y_1 \preceq y_2 \preceq \dots \preceq y_{|\widehat{T}^n(x)|}$ denote the above ordering of the points in $\widehat{T}^n(x)$.
Set $\mathfrak{S}_1 = \mathfs{P}(\widehat{T}^n(x)) = \{y_1 , y_2 , \dots, y_{|\widehat{T}^n(x)|}\}$.
Starting with $y_1'=y_1$, denote $R_1' = R_{j_{y_1'}}$ and 
remove from the set $\mathfrak{S}_1$ all the vertices which are contained in the ball $\mathring{B}_{R'_1}(y'_1)$ to construct the set $\mathfrak{S}_2$.
Now repeat this starting from $\mathfrak{S}_2$.
In general, given $\mathfrak{S}_i$ let $y_i'$ be the first element of $\mathfrak{S}_i$ with respect to $\preceq$ ordering, take $R_i' = R_{j_{y_i'}}$ 
and let  $\mathfrak{S}_{i+1} = \mathfrak{S}_i \backslash \mathring{B}_{R_i'}(y_i')$.
Stop the algorithm when $\mathfrak{S}_{k+1} = \emptyset$.
We will also use the notation $K_i'=K_{j_{y_i'}}$, $L_i'=L_{j_{y_i'}}$ and $\widehat{y}_i' = (y_i,n)$.

The algorithm returns $k$ points $y_1', \dots, y_k'$ in $\widehat{T}^n(x)$, with the corresponding radii $R_i'$.
Moreover, the union of open balls $\cup_{i=1}^k\mathring{B}_{R_i'}(y_i')$ cover the projection of $\widehat{T}^n(x)$, that is 
\[\widehat{T}^n(x) = \bigcup_{i=1}^k \bigl(\mathring{B}_{R_i'}(y_i') \times \{n\}\bigr).\]
Also observe that $R_1' \geq R_2' \geq \dots \geq R_k'$, and that, by the construction of the original sequence $(R_i)$ and the choice of the indices $j_{y}$ for $(y,n) \in \widehat{T}^n(x)$,  none of the annuli $\mathring{B}_{6(R_i'+L_i'+1)}(y_i') \backslash \mathring{B}_{R_i'}(y_i')$, $1 \leq i \leq k$ contain any point of $\mathfs{P}(\widehat{T}^n(x))$.

We claim that the balls $B_{3R_i'+3L_i'+1}(y_i')$, for $i=1,\dots, k$ are disjoint pairwise.
To show this, observe that for $i < j$ we necessarily have $y_j'\in \mathfrak{S}_i$, and in particular $y_j' \notin \mathring{B}_{R_i'}(y_i')$.
Since we also have $y_j' \notin \mathring{B}_{6(R_i'+L_i'+1)}(y_i') \backslash \mathring{B}_{R_i'}(y_i')$, it follows that $d(y_j',y_i') \geq 6(R_i'+L_i'+1)$.
Since $R_i' \geq R_j'$, and thus also $L_i' \geq L_j'$, it follows that the balls $B_{3R_i'+3L_i'+1}(y_i')$ and $B_{3R_j'+3L_j'+1}(y_j')$ are disjoint.

From this it is clear that the sets of vertices $\mathcal{V}_{i}(\widehat{y}_i';R_i'+1,L_i') \cup \mathcal{V}_{b}(\widehat{y}_i';R_i'+1,L_i')$, $1 \leq i \leq k$ are pairwise disjoint. 
The same is then also true for the edges sets $\mathcal{E}_{i}(\widehat{y}_i';R_i'+1,L_i') \cup \mathcal{E}_{b}(\widehat{y}_i';R_i'+1,L_i')$, $1 \leq i \leq k$.
Therefore, given $A_n$, one can define the event $E_n$ for which 
\begin{itemize}
\item $ \displaystyle  \omega(\widehat{e}) < \alpha$ for all $\displaystyle \widehat{e} \in \bigcup_{i=1}^k \mathcal{E}_{b}(\widehat{y}_i';R_i'+1,L_i')$, and
\item $ \displaystyle  \omega(\widehat{e}) > \beta$ for all $ \displaystyle  \widehat{e} \in \bigcup_{i=1}^k \mathcal{E}_{i}(\widehat{y}_i';R_i'+1,L_i')$.
\end{itemize}
Since the event $E_n$ puts constraints on at most $M(2R_M+3L_M)\overline{N}$ vertices, where $\overline{N}$ is the number of vertices in a ball of radius $3R_M + 3L_M +1$ in $G$, it is clear that on the event $A_n$ we have that $\widehat{\p}[E_n|\mathfs{F}_n]$ is bounded away from 0, as $n \to \infty$.
To show \eqref{eq:killing_estimate_general_directed} and finish the proof it suffices to show that on the event $A_n \cap E_n$ we necessarily have  $\widehat{T}^{n+2R_M + 3L_{M}}(x) = \emptyset$.
Assume the opposite, that $A_n \cap E_n$ happens and  $\widehat{T}^{n+2R_M + 3L_{M}}(x) = \emptyset$ holds. 
Then there would have to be a geodesic from $G_{n+2R_M + 3L_{M}}$ to $G$ passing through some point $(z,n)$ of $\widehat{T}^n(x)$, and let $1 \leq i \leq k$ be the index such that $(z,n) \in \mathring{B}_{R_i'}(y_i')$.
Note that if $G$ is bipartite we necessarily have that $d(y_i',z)$ is even, since both $(y_i',n)$ and $(z,n)$ are in $\widehat{T}^n(x)$. 
Since $R_i'$ is also even, if $G$ is bipartite, we have that $z$ and $S_{R_i'}(y_i')$ are of the same parity.
We will reach the contradiction, by applying Lemma \ref{lemma:general_directed_killing_argument}, for $\widehat{x} := \widehat{y}_i'$, $R:= R_i'$, $L:=L_i'$, $K:=2K_i'$ and $\alpha$ and $\beta$ as in the current proof.
To justify the application observe that the condition \eqref{eq:directed_general_right_cuttoffs} immediately implies \eqref{eq:general_directed_killing_numerical_condition}. 
On the other hand, the event $A_n$ implies $\widetilde{D}_{\widehat{y_i'}}$, and so in particular $D_{\widehat{y_i'},R_i'+1;K_i'}$ holds. 
Therefore, if $G$ is non-bipartite, we have $\widehat{d}_\omega((z',n),G) \leq \widehat{d}_\omega((z,n),G) + 2K_i'$ for every $z' \in S_{R_i'}(y_i') \cup S_{R_i'+1}(y_i')$. If $G$ is bipartite, then we can conclude the same for every $z' \in S_{R_i'}(y_i')$ since $R_i'$ is even (as we argued above $z$ and $S_{R_i'}(y_i')$ are of the same parity).

\end{proof}

\subsection{Undirected case for general graphs}

Let $S$ be a set of vertices at the level $n,$ that is $S \subset G_n$.
We define $Cl^v_n(S)$, the level $n$ vertex closure of $S$, as the set of vertices of the form $(y,n)$ which are either elements of $S$, or have a neighbor in $S$.
The level $n$ edge closure $Cl^e_n(S)$ of $S$ is defined as the set of edges connecting two vertices in $Cl^v_n(S)$, and the level $n$ vertex boundary $\partial^v_nS$ of $S$ is defined as $Cl^v_n(S)  \backslash S$.
For $L>0$ define the cylinder $\mathcal{C}_L(S)$ with the base $S$ and height $L$ as the subgraph of $\overline{G}$ whose vertices are $(y,k)$ for $y$ such that $(y,n) \in Cl^v_n(S)$ and $k$ such that $n \leq k \leq n+L$.
The edge set of $\mathcal{C}_L(S)$ consists of all edges of $\overline{G}$ which connect any two vertices of $\mathcal{C}_L(S)$.
Furthermore, we define the side vertex and edge boundary of $\mathcal{C}_L(S)$ as 
\[
\partial_s^v \mathcal{C}_L(S) = \{(y,k) : y\in \mathfs{P}(\partial^v_n S), n < k \leq n+L \} 
\]
and
\[
\partial_s^e \mathcal{C}_L(S) = \{(y^\downarrow,k) : y\in \mathfs{P}(\partial^v_n S), n < k \leq n+L \},
\]
respectively. {Here $(y^\downarrow,k)$ denotes the edge $((y,k),(y,k-1))$.}
The top vertex and edge boundary of $C_L(S)$ as 
\[
\partial_t^v \mathcal{C}_L(S) = \{(y,n+L) : y\in  \mathfs{P}(Cl_n^v(S)) \} 
\]
and
\[
\partial_t^e \mathcal{C}_L(S) = \{(e,n+L) : e \in \mathfs{P}(Cl_n^e(S)\},
\]
respectively. Here $(e,n+L)$ denotes the edge $e$ in $G$ lifted to level $n+L$.
Finally the interior vertex set $In^v(\mathcal{C}_L(S))$ and the interior edge set $In^e(\mathcal{C}_L(S))$ are defined as the set of vertices of the form $\{(y,k) : y \in S, n < k < n+L\}$, and the set of edges of $\mathcal{C}_L(S)$ which are not in $Cl_n^e(S) \cup \partial_s^e \mathcal{C}_L(S) \cup \partial_t^e \mathcal{C}_L(S)$, respectively.

\begin{proof}[Proof of Theorem \ref{thm:main} in the general undirected case]
Assume  $\overline{\p}[|\overline{T}(x)| = \infty] > 0$, so that 
\[
\overline{\p}\Big[\bigcap_n \{\overline{T}^n(x) \neq \emptyset\}\Big] = \overline{\p}[|\overline{T}(x)| = \infty] > 0.
\]
By  Lemma \ref{lemma:ergodic} we have for $M$ large enough 
\begin{align*}
\overline{\p}\big[1 \leq |\overline{T}^n(x)| \leq M\big] & = \overline{\p}\big[\overline{T}^n(x) \neq \emptyset\big]- \overline{\p}\big[|\overline{T}^n(x)| \geq M\big]  \\
& \geq \overline{\p}\big[|\overline{T}(x)|  = \infty\big] - \frac{1}{M} \geq \frac{1}{2}\overline{\p}\big[|\overline{T}(x)|  = \infty\big].
\end{align*}
Given such a value of $M$.
For $\kappa > 0$ and a fixed vertex $x$ consider the event 
\[
D_n = \{1 \leq |\overline{T}^n(x)| \leq M , \omega(\overline{e}) < \kappa, \text{ for all } \overline{e} \in Cl_n^e(\overline{T}^n(x))\}.
\]
We apply \eqref{eq:ergodic_with_events_directed} in Lemma \ref{lemma:ergodic_with_conditioning_general} in the case when $A_S$ is the event defined as $\omega(\overline{e}) < \kappa$, for all 
$\overline{e} \in Cl_n^e(S)$.
We obtain
\[
\overline{\p}[D_n] \geq \overline{\p}[1 \leq |\overline{T}^n(x)| \leq M] - \overline{\p}[A_{B_M(x) \times n}^c].
\]
Since $M$ is fixed, by taking $\kappa$ large enough, but still smaller than the maximum of the support of $\omega(\overline{e})$, we can make the term $\overline{\p}[A_{B_M(x) \times n}^c]$ arbitrarily small, so that
\[
\overline{\p}[D_n] \geq \frac{\overline{\p}[|\overline{T}(x)|  = \infty]}{3},
\]
for all $n$.
Fix such  a value of $\kappa$ and  take two values $\kappa_1$ and $\kappa_2$ such that $\kappa_1 < \kappa < \kappa_2$ and such that $\overline{\p}[\omega(\overline{e}) < \kappa_1] >0$ and $\overline{\p}[\omega(\overline{e}) > \kappa_2] >0$, and take an integer $L$ such that 
\[
L \geq \frac{3M\kappa}{\kappa_2-\kappa_1}.
\]
Consider the event $S_{n,L}$ for which $\omega(\overline{e}) < \kappa_1$ for all $\overline{e} \in \partial_s^e \mathcal{C}_L(\overline{T}^n(x)) \cup \partial_t^e \mathcal{C}_L(\overline{T}^n(x))$ and $\omega(\overline{e}) > \kappa_2$ for all $\overline{e} \in  In^e(\mathcal{C}_L(\overline{T}^n(x)))$.
If $1 \leq |\overline{T}^n(x)| \leq M$ the event $S_{n,L}$ puts constraints on at most  $(3d+2)ML$ edges, where $d$ is the degree in graph $G$. 
Moreover, these constraints are on the weights of edges outside of $\Lambda_n$, so on the event $D_n$ we have $\overline{\p}[S_{n,L}|\mathfs{F}_n] \geq \delta$, for some $\delta > 0$ and all $n$.
By Lemma \ref{lemma:killing_argument}, it is  sufficient to show that on the event $D_n \cap S_{n,L}$ we have that $\overline{T}^{n+L}(x) = \emptyset$, and the rest of the proof is dedicated to this.

Assume that each of the events $D_n$, $S_{n,L}$ and $\overline{T}^{n+L}(x)  \neq \emptyset$ happen.
Choose a vertex $\overline{y}_0 = (y,n+L) \in \overline{T}^{n+L}(x)$ such that the geodesic $\overline{\gamma}_{\Lambda_{n+L}}(\overline{y}_0)$ has the smallest number of edges among all $\overline{\gamma}_{\Lambda_{n+L}}$ geodesics  from a vertex in $\overline{T}^{n+L}(x)$ to $G$.
Label the vertices in $\overline{\gamma}_{\Lambda_{n+L}}(\overline{y}_0)$ starting from $\overline{y}_0$ by $\overline{y}_0,\overline{y}_1, \dots, \overline{y}_m = x$.
Let $k \geq 1$ be the smallest index such that $\overline{y}_i \in \Lambda_n$, for all $i \geq k$, and denote $\overline{y}_k = (y_k,n)$.
It is easily observed that $\overline{y}_k \in \overline{T}^n(x)$ and $\overline{d}_{\omega,\Lambda_{n+L}}(\overline{y}_k) = \overline{d}_{\omega,\Lambda_n}(\overline{y}_k)$. Furthermore, the vertex $\overline{y}_{k-1}$ is the neighbor above the vertex $\overline{y}_k$,  so in particular $\overline{y}_{k-1} \in In^v(\mathcal{C}_L(\overline{T}^n(x)))$. 
We claim that we can find an index $l$, $ 0\leq l < k-1$ such that $\overline{y}_l \in \partial_t^v \mathcal{C}_L(\overline{T}^n(x)) \cup \partial_s^v   \mathcal{C}_L(\overline{T}^n(x)) \cup Cl_n^v(\overline{T}^n(x))$ and that $\overline{y}_{i} \in In^v(\mathcal{C}_L(\overline{T}^n(x)))$ for all $l < i <k$.
Observe that it suffices to show that there is some index $l$ such that $ 0\leq l < k-1$ and 
\begin{equation}\label{eq:last_time_hitting_cyllinder_general}
\overline{y}_l \in \partial_t^v \mathcal{C}_L(\overline{T}^n(x)) \cup \partial_s^v   \mathcal{C}_L(\overline{T}^n(x)) \cup Cl_n^v(\overline{T}^n(x)),
\end{equation} 
then we simply take $l$ to be the largest such index.
If for all $0 \leq i \leq k$ we have $\overline{y}_i \in \mathcal{C}_L(\overline{T}^n(x))$, then necessarily  $\overline{y}_0 \in \mathcal{C}_L(\overline{T}^n(x)) \cap G_{n+L} = \partial_t^v \mathcal{C}_L(\overline{T}^n(x))$, so \eqref{eq:last_time_hitting_cyllinder_general} is satisfied for $l=0$. 
Otherwise, take $j$ to be the largest index smaller than $k$ such that $\overline{y}_j \notin \mathcal{C}_L(\overline{T}^n(x))$, and observe that then \eqref{eq:last_time_hitting_cyllinder_general} is satisfied for $l=j+1$. 

In either case we proved the existence of the index $l$ satisfying \eqref{eq:last_time_hitting_cyllinder_general} and $\overline{y}_{i} \in In^v(\mathcal{C}_L(\overline{T}^n(x)))$ for all $l < i <k$.
Now we argue that the event $D_n \cap S_{n,L}$ implies that  $\overline{y}_l \notin Cl_n^v(\overline{T}^n(x))$, so that $\overline{y}_l \in \partial_t^v \mathcal{C}_L(\overline{T}^n(x)) \cup \partial_s^v \mathcal{C}_L(\overline{T}^n(x))$.
Assume that $\overline{y}_l  \in Cl_n^v(\overline{T}^n(x))$ and project the path $\overline{y}_l,\overline{y}_{l+1}, \dots, \overline{y}_k$ onto $G$ to obtain a path in $G$ whose vertices we denote by $z_0, z_1, \dots , z_r$ with $r \leq k-l$.
The path connecting vertices $\overline{y}_l = (z_0,n), (z_1,n), \dots , (z_r,n) = \overline{y}_k$ has weight at most $r\kappa$.
The sequence $\overline{y}_l, \overline{y}_{l+1}, \dots, \overline{y}_k$ defines a path consisting of exactly $r$ ``horizontal'' edges in $In^e(\mathcal{C}_L(\overline{T}^n(x)))$.
The total sum of weights of these edges is at least $r\kappa_2 > r\kappa$.
In particular the path  $(z_0,n), (z_1,n), \dots , (z_r,n)$ connecting $\overline{y}_l$ and $\overline{y}_k$ is lighter than the part of the geodesic connecting these two vertices, which yields a contradiction.
Therefore we showed that $\overline{y}_l \in \partial_t^v \mathcal{C}_L(\overline{T}^n(x)) \cup \partial_s^v \mathcal{C}_L(\overline{T}^n(x))$.

Let $j$ be the height of $\overline{y}_l$, that is $\overline{y}_l = (z_0,j)$.
The path $\overline{y}_l, \overline{y}_{l+1}, \dots, \overline{y}_k$ consists of exactly $r$ horizontal edges and at least $j-n$ vertical edges, all of which are in $In^e(\mathcal{C}_L(\overline{T}^n(x)))$.
On the event $D_n \cap S_{n,L}$ the total weight of this path is at least $(r+j-n)\kappa_2$, and since this path 
is a part of the geodesic through $\overline{y}_l$ we obtain
\begin{equation}\label{eq:connecting_by_the_geodesic_undirected_general}
\overline{d}_{\omega,\Lambda_{n+L}}(\overline{y}_l,G) \geq \overline{d}_{\omega,\Lambda_{n+L}}(\overline{y}_k,G) + (r+j-n)\kappa_2.
\end{equation}

Consider the first case when $\overline{y}_l = (z_0,j) \in  \partial_s^v \mathcal{C}_L(\overline{T}^n(x))$, so that in particular $(z_0,n) \in \partial_n^v\overline{T}^n(x)$.
Comparing the geodesic $\overline{\gamma}_{\Lambda_{n+L}}(z_0,n)$ from $(z_0,n)$ to $G$ with the concatenation of the path  $(z_0,n), (z_1,n), \dots , (z_r,n)$ and the geodesic $\overline{\gamma}_{\Lambda_{n+L}}(\overline{y}_k)$ from $\overline{y}_k$ to $G$, we have
\[
\overline{d}_{\omega,\Lambda_{n+L}}((z_0,n),G) \leq  \overline{d}_{\omega,\Lambda_{n+L}}(\overline{y}_k,G)+r\kappa.
\] 
Furthermore, the vertical path connecting edges $\overline{y}_l$ and $(z_0,n)$ has weight at most $(j-n)\kappa_1$, which gives 
\[
\overline{d}_{\omega,\Lambda_{n+L}}(\overline{y}_l,G) \leq \overline{d}_{\omega,\Lambda_{n+L}}((z_0,n),G) + (j-n)\kappa_1 \leq \overline{d}_{\omega,\Lambda_{n+L}}(\overline{y}_k,G)+r\kappa + (j-n)\kappa_1.
\]
This is a contradiction with \eqref{eq:connecting_by_the_geodesic_undirected_general}.

Now assume that $\overline{y}_l \notin  \partial_s^v \mathcal{C}_L(\overline{T}^n(x))$, so that $\overline{y}_l \in  \partial_t^v \mathcal{C}_L(\overline{T}^n(x))$ and $(z_0,n) \in \overline{T}^n(x)$. 
In this case \eqref{eq:connecting_by_the_geodesic_undirected_general} is satisfied for $j=n+L$ and $r=0$, that is
\begin{equation}\label{eq:connecting_by_the_geodesic_undirected_general-top}
\overline{d}_{\omega,\Lambda_{n+L}}(\overline{y}_l,G) \geq \overline{d}_{\omega,\Lambda_{n+L}}(\overline{y}_k,G) + L\kappa_2.
\end{equation}
Take a vertex $z'$ in $\mathfs{P}(\partial_n^v\overline{T}^n(x))$ such that $(z',n)$ can be connected to both $(z_0,n)$ and $(z_r,n) = \overline{y}_k$ by paths with edges in $Cl_n^e(\overline{T}^n(x))$ of lengths at most $M$ and  $2M$, respectively.
To construct this vertex observe that $(z_0,n) \in \overline{T}^n(x)$, $|\overline{T}^n(x)| \leq M$ and the ball $B_M(z_0)$ has more than $M$ elements. 
Therefore  we can actually find a vertex $(z',n)\in \partial_n^v \overline{T}^n(x)$ which can be connected to $(z_0,n)$ by a path in $Cl_n^e(\overline{T}^n(x))$ of length at most $M$.
Extending this path through $(z_0,n), \dots (z_r,n)$ creates a path in $Cl_n^e(\overline{T}^n(x))$ between $(z',n)$ and $(z_r,n) = \overline{y}_k$ of length at most $2M$.
Again, comparing the geodesic $\overline{\gamma}_{\Lambda_{n+L}}(z',n)$  with the concatenation of the above path between $(z',n)$ and $\overline{y}_k$, and the geodesic $\overline{\gamma}_{\Lambda_{n+L}}(\overline{y}_k)$ now gives
\[
\overline{d}_{\omega,\Lambda_{n+L}}((z',n),G) \leq \overline{d}_{\omega,\Lambda_{n+L}}(\overline{y}_k,G) + 2M\kappa.
\]
Combined with \eqref{eq:connecting_by_the_geodesic_undirected_general-top} this yields
\[
\overline{d}_{\omega,\Lambda_{n+L}}(\overline{y}_l,G) \geq \overline{d}_{\omega,\Lambda_{n+L}}((z',n),G)+ L\kappa_2 -2M\kappa.
\]
However, we will obtain a contradiction by showing that  $\overline{y}_l$ and $(z',n)$ can be connected by a path whose total weight is strictly less that $L\kappa_2 -2M\kappa$.
To construct this path lift the path in $Cl_n^e(\overline{T}^n(x))$ of length at most $M$ connecting $(z_0,n)$ and $(z',n)$ by height $L$, to obtain a path in $\partial_t^e\mathcal{C}_L(\overline{T}^n(x))$ connecting $\overline{y}_l$ and $(z',n+L)$.
On the event $D_n \cap S_{n,L}$ this path has total weight at most $M\kappa_1$.
Then connect the points $(z',n+L)$ and $(z',n)$ by a vertical path which contains $L$ edges in $\partial_s^e\mathcal{C}_L(\overline{T}^n(x))$, and whose total weight is thus at most $L\kappa_1$.
Concatenating these two paths gives a path between $\overline{y}_l$ and $(z',n)$ of total weight at most $(M+L)\kappa_1$.
By the choice of $L$ we have $(M+L)\kappa_1 < L\kappa_2 -2M\kappa$, which finishes the proof.

\end{proof}
\section{Tree height and shape}\label{sec:Tree height and shape}
In this section, we first prove Theorems \ref{thm:finite_widhts} and \ref{thm:finite_heights}. 
Given Theorem \ref{thm:main}, the first proof is rather short.
\begin{proof}[Proof of Theorem \ref{thm:finite_widhts}]
By Theorem \ref{thm:main} we have that $\lim_{n\rightarrow\infty}w_n(T(x))=0$ a.s. If we assume that $\e[w(T(x))]<\infty$, since $w_n(T(x))\le w(T(x))$, we obtain that $\e[w_n(T(x))]<\infty$. By the dominated convergence theorem, $\e[w_n(T(x))]\rightarrow 0$. This is a contradiction to Lemma \ref{lem:expwidthn}.
\end{proof}


\begin{proof}[Proof of Theorem \ref{thm:finite_heights} i)]
The proof follows easily from the observation that $w(\widehat{T}(x))\le \phi_G(h(\widehat{T}(x)))$, and Theorem \ref{thm:finite_widhts}.
\end{proof}

For the proofs of Theorem \ref{thm:finite_heights} in the undirected case and Theorem \ref{thm:shape}, we are using results about full lattice first passage percolation by Kesten \cite{kesten1993speed}. For the sake of readability, we will use a weak interpretation of Kesten's result. 
Let $\prob^{\BZ^2}$ be the product measure of $\mu$ over the edges of the full planar lattice $\BZ^2$. Let $B^{\BZ^2}(x,t)$ be the ball of radius $t$ around $x$ in the first passage percolation metric induced by $\prob^{\BZ^2}$ i.e. $B^{\BZ^2}(x,t)=\{y\in\BZ^2:d^{\BZ^2}_\omega(y,x)<t\}$, where $d^{\BZ^2}_\omega(y,x)$ is the minimal weight of any nearest neighbor path in $\BZ^2$ connecting $x$ and $y$. Abbreviate $\overline{B}(x,t)=\{y\in\overline{\BH}:\overline{d}_\omega(y,x)<t\}.$ Since the measure $\overline{\prob}$ is a restriction of $\prob^{\BZ^2}$, we abuse notation and consider $B(x,t)$ measurable events under the measure $\prob^{\BZ^2}$.

\begin{proof}[Proof of Theorem \ref{thm:finite_heights} ii)] For simplicity we write the proof for the case $G=\BZ$, the case $G=\BZ^d$ follows the same proof idea.
Assume for the sake of contradiction that $\overline{\ev}[h(\overline{T}(0))]<\infty$. Then
\bae\label{eq:height_moment_contradiction}
\infty>\overline{\ev}[2h(\overline{T}(0))]=\sum_{x=0}^\infty\overline{\prob}\left[h(\overline{T}(0))>\frac{x}{2}\right]>\frac{1}{2}\sum_{x=-\infty}^{\infty}\overline{\p}\left[h(\overline{T}(x))>\frac{|x|}{2}\right]
.\eae
By Borel - Cantelli we obtain that $\overline{\prob}$-a.s.~there exists some $R>0$ such that $h(\overline{T}(x))<\frac{|x|}{2}$ for all $|x|>R$.
The contradiction is reached by showing that in fact for all but finitely many $x \in \mathbb{Z}$ the tree $\overline{T}(x)$ is contained in  the Euclidean ball of radius $\frac{2|x|}{3}$ around $x$, and thus $\overline{\BH}$ is not covered.
The rest of the proof is devoted to this.

The key ingredient in the proof is Kesten's result \cite[Theorem 2]{kesten1993speed}.
For any vector of Euclidean length 1, $\|\xi\|_2=1$, let $m(\xi)=\lim_{n\rightarrow\infty}\frac{d^{\mathbb{Z}^2}_\omega(\lfloor n\xi\rfloor,0)}{n}$, where for $(u,v)\in\BR^2$, $\lfloor(u,v)\rfloor=(\lfloor u\rfloor,\lfloor v\rfloor)$. It is known (by convexity) that for every unit vector $\rho\in\BR^2$, $m(\rho)\ge m(0,1)$.

Let $x\in\BZ$ s.t, $\htr{x} \leq \frac{|x|}{2}$. Consider a vertex $y=(u,v)\in\overline{\BH}$ such that $v \leq \frac{|x|}{2}$ and $\|y-x\|_2>\frac{2|x|}{3}$.
By \cite[Theorem 2]{kesten1993speed}, for $x$ large enough we have \[\overline{d}_\omega (y,x) \geq d^{\mathbb{Z}^2}_\omega(y,x)>\frac{7|x|}{12}m\left(\frac{y-x}{\|y-x\|_2}\right) \geq \frac{7|x|}{12}m\left(0,1\right)\] with probability higher than $1-e^{-c|x|^{1/4}}$. On the other hand with the same probability $d^{\mathbb{Z}^2}_\omega(y,(u,0))<\frac{13|x|}{24}m(0,1)$. 
The geodesic between $y$ and $(u,0)$ in $\mathbb{Z}^2$ is either contained in $\overline{\BH}$ and then $\overline{d}_\omega(y,(u,0))<\frac{13|x|}{24}m(0,1)$, or it makes the first intersection with $\partial \overline{\BH}$ at some $(u_1,0)$, and then $\overline{d}_\omega(y,(u_1,0))<\frac{13|x|}{24}m(0,1)$.
Thus with probability higher than $1-2e^{-c|x|^{1/4}}$, it holds that $y\notin \overline{T}(x)$. By Borel-Cantelli all but finite number of trees $\overline{T}(x)$ are contained in the Euclidean ball of radius $\frac{2|x|}{3}$ around $x$.
\end{proof}

\begin{remark}\label{rem:question_about_moments}
Note that the above result leaves the possibility that for $G = \mathbb{Z}^d$ the $k$-th moments of  $h(\overline{T}(0))$  are finite for $1 \leq k \leq d-1$. 
At this points it is not clear what is the smallest value of $k$ for which $\overline{\mathbf{E}}[h(\overline{T}(0))^k] = \infty$, or how tree heights behave as the dimension $d$ increases.
\end{remark}

The next lemma is a weaker version of results found in \cite{ahlberg2013convergence}. We present a proof for the sake of completeness.
\begin{lemma}\label{lem:bijec} For every $t>0$, $\prob^{\BZ^2}$-a.s.
$$\bigcup_{x\in\BZ}\overline{T}(x,t)=\bigcup_{x\in\BZ}\overline{B}(x,t)=\bigcup_{x\in\BZ}B^{\BZ^2}(x,t).$$
\end{lemma}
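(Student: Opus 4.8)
The plan is to split the claimed string of equalities into the two separate identities
\[
\bigcup_{x\in\BZ}\overline{T}(x,t)=\bigcup_{x\in\BZ}\overline{B}(x,t)
\quad\text{and}\quad
\bigcup_{x\in\BZ}\overline{B}(x,t)=\overline{\BH}\cap\bigcup_{x\in\BZ}B^{\BZ^2}(x,t),
\]
and then chain them; since $\overline{T}(x,t)$ and $\overline{B}(x,t)$ are by definition subsets of $\overline{\BH}$, and (as in the paragraph preceding the lemma) $\bigcup_{x}B^{\BZ^2}(x,t)$ is regarded as a $\prob^{\BZ^2}$-measurable set viewed inside $\overline{\BH}$, this yields the statement. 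I will work on the full-measure event on which all edge weights are strictly positive (this is where non-atomicity of $\mu$ enters) and on which the infima defining $\overline{d}_\omega(\cdot,\cdot)$, $d^{\BZ^2}_\omega(\cdot,\cdot)$ and $\overline{d}_\omega(\cdot,\partial\overline{\BH})$ are all attained (a standard consequence of $\mu(\{0\})=0$: the time constant is positive, so balls are finite and geodesics exist); on this event the argument is entirely deterministic.

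For the first identity, suppose $\overline{z}\in\overline{T}(x,t)$. Then $\overline{z}$ lies on some $\overline{\gamma}\in\overline{\Gamma}(\overline{y},x)$ with $\lambda(\overline{\gamma})=\overline{d}_\omega(\overline{y},\partial\overline{\BH})<t$; the subpath of $\overline{\gamma}$ from $\overline{z}$ to $x$ is a path with weight at most $\lambda(\overline{\gamma})<t$ (weights are nonnegative), so $\overline{d}_\omega(\overline{z},x)<t$, i.e. $\overline{z}\in\overline{B}(x,t)$. Conversely, if $\overline{z}\in\overline{B}(x,t)$ for some $x\in\BZ$, then $\overline{d}_\omega(\overline{z},\partial\overline{\BH})\le\overline{d}_\omega(\overline{z},x)<t$; letting $\overline{\gamma}^*$ realize $\overline{d}_\omega(\overline{z},\partial\overline{\BH})$ and $x^*\in\BZ$ be its endpoint, the path $\overline{\gamma}^*\in\overline{\Gamma}(\overline{z},x^*)$ satisfies $\lambda(\overline{\gamma}^*)=\overline{d}_\omega(\overline{z},\partial\overline{\BH})<t$, which is exactly the membership condition for $\overline{z}\in\overline{T}(x^*,t)$.

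For the second identity, the inclusion $\overline{B}(x,t)\subset B^{\BZ^2}(x,t)$ is immediate because every path of $\overline{\BH}$ is also a path of $\BZ^2$ with the same weights, so $d^{\BZ^2}_\omega\le\overline{d}_\omega$. For the reverse inclusion, fix $\overline{y}\in\overline{\BH}$ with $d^{\BZ^2}_\omega(\overline{y},x)<t$ for some $x\in\BZ$, and let $g$ be the $\BZ^2$-geodesic from $\overline{y}$ to $x=(x,0)$. Since $g$ ends on the axis, it visits $\BZ\times\{0\}$; let $p$ be its first visit, reading $g$ from $\overline{y}$, and let $g'$ be the initial segment of $g$ up to and including $p$ (possibly trivial, when $\overline{y}=p$). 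Because an edge of $\BZ^2$ joins adjacent levels, $g$ cannot reach a negative level before first reaching level $0$, so every vertex of $g'$ other than $p$ is at level $\ge1$; hence $g'$ never uses a horizontal level-$0$ edge, and therefore all of its edges — including its final vertical edge into $p$ — are edges of $\overline{\BH}$. Thus $g'$ is an $\overline{\BH}$-path with $\lambda(g')\le\lambda(g)=d^{\BZ^2}_\omega(\overline{y},x)<t$, giving $\overline{d}_\omega(\overline{y},p)<t$ with $p\in\BZ$, i.e. $\overline{y}\in\overline{B}(p,t)$ (trivial if $\overline{y}$ already lies on the axis, as $t>0$).

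The only step needing genuine care is the last one: $\overline{\BH}$ is the upper half-plane with the horizontal level-$0$ edges deleted, so a generic $\BZ^2$-geodesic to the axis need not restrict to an $\overline{\BH}$-path, and the fix is precisely to truncate it at its first contact with the axis, after which none of the deleted edges can occur. Everything else is bookkeeping. I would also flag explicitly the harmless abuse of notation by which $\bigcup_{x}B^{\BZ^2}(x,t)$ is read inside $\overline{\BH}$, since as a subset of $\BZ^2$ it does contain sites below the axis for $t$ large, whereas the other two sets lie in $\overline{\BH}$ by construction.
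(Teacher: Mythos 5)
Your proof is correct and follows essentially the same route as the paper's: the forward inclusions are immediate, and the key step --- converting a $\BZ^2$-path of weight less than $t$ into an $\overline{\BH}$-path by truncating it where it meets the axis --- is exactly the device the paper uses (the paper cuts at the last edge outside $\overline{\BH}$ along an arbitrary near-optimal path rather than at the first axis visit of the geodesic, but these are cosmetic differences). Your explicit remark that the right-hand union must be read inside $\overline{\BH}$ is a fair clarification of the paper's acknowledged abuse of notation.
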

\begin{proof}
First note that for every $x\in\BZ$ and $t>0$,
$$\overline{T}(x,t)\subset \overline{B}(x,t)\subset B^{\BZ^2}(x,t).$$
For the other direction, let $y\in B^{\BZ^2}(x,t)$. 
It suffices to show that $y \in \overline{B}(x',t)$ for some $x' \in \BZ$. 
Then, if $y \in \overline{T}(x',t)$ we are done, and otherwise $y \in \overline{T}(x'',t)$ for some $x'' \in \mathbb{Z}$ with $\overline{d}_\omega (x'',y)  \leq \overline{d}_\omega (x',y)   < t$, so in particular $y \in \overline{B}(x'',t)$.
To show the existence of such $x'$ observe that there exists a path from $x$ to $y$, with edges $e_1,\ldots,e_n$ in $\BZ^2$ connecting $x$ and $y$, satisfying $\sum_{i=1}^n\omega(e_i)<t$. 
If this path is contained in $\overline{\BH}$, then it is clear that $y \in \overline{B}(x,t)$ and set $x' = x$. 
Assuming that this path is not contained in $\overline{\BH}$, consider the last edge in $\overline{\BH}^c$ and abbreviate it $e_l$. 
If $x'$ is the vertex in $\mathbb{Z}$ incident to both $e_l$ and $e_{l+1}$, then the path $e_{l+1},\ldots,e_n$ is contained in $\overline{B}(x',t)$, that is $y\in \overline{B}(x',t)$ and we are done. 
\end{proof}
We can now prove Theorem \ref{thm:shape}.
\begin{proof}[Proof of Theorem \ref{thm:shape}]
By Lemma \ref{lem:bijec}
\bae
\left(\partial^{in}\bigcup_{x\in\BZ}\overline{T}(x,t)\right)\cap\left([-t,t]\times\BZ_+\right)=
\left(\partial^{in}\bigcup_{x\in\BZ}B^{\BZ^2}(x,t)\right)\cap\left([-t,t]\times\BZ_{+}\right)
.\eae
Therefore, we can replace $\left(\partial^{in}\bigcup_{x\in\BZ}\overline{T}(x,t)\right)\cap\left([-t,t]\times\BZ_+\right)$ in the definition of the event $C_{\mathbf{d},t}$ by $\left(\partial^{in}\bigcup_{x\in\BZ}B^{\BZ^2}(x,t)\right)\cap\left([-t,t]\times\BZ_{+}\right)$.
By \cite[Theorem 2]{kesten1993speed}, there exists some compact convex set $\CD'$ and $c>0$ such that for $\CD_t^1 = ((1-t^{-0.1})t\CD')\cap \mathbb{Z}^2$ and $\CD_t^2 = ((1+t^{-0.1})t\CD')\cap \mathbb{Z}^2$
\bae
\prob^{\BZ^2}\left[\CD_t^1 \subset B^{\BZ^2}(0,t)\subset \CD_t^2\right]\ge 1-e^{-ct^{1/4}}.
\eae
Now let $\mathbf{d}$ be the highest point of $\CD'$ i.e. $\mathbf{d}=\max\{y\in\BR:\exists x\in \BR, (x,y)\in\CD'\}$. 
Consider the box $\CB_t=([-3\mathbf{d}t,3\mathbf{d}t] \times [0,3\mathbf{d}t])\cap\BZ^2$, thus by union bound,
\bae\label{eq:goodballs}
\prob^{\BZ^2}\left[\forall x\in\CB_t,~x+\CD_t^1\subset B^{\BZ^2}(x,t) \subset x+\CD_t^2\right]\ge 1-\frac{1}{2}e^{-ct^{1/5}}
.\eae
Furthermore, consider the event that for every $x \in \CB_t' = [-t,t] \times [2\mathbf{d}t,\infty)$ the ball $B^{\mathbb{Z}^2}(x,t)$ does not intersect $\partial \overline{\BH}$.
Then the probability of this event is bounded by
\begin{equation}\label{eq:goodballs_2}
\mathbf{P}^{\BZ^2}\Big[
\forall x \in \CB_t',~B^{\mathbb{Z}^2}(x,t) \cap \partial \overline{\BH} = \emptyset 
\Big]
\geq 
1- 2t\sum_{s > 2t\mathbf{d}}e^{-cs^{1/4}} \geq 1-\frac{1}{2}e^{-ct^{1/5}}.
\end{equation}

Assume that both the events in \eqref{eq:goodballs} and \eqref{eq:goodballs_2} hold. 
Let \[y=(v,w)\in\left(\partial^{in}\bigcup_{x\in\BZ}B^{\BZ^2}(x,t)\right)\cap\left([-t,t]\times\BZ_{+}\right).\] 
In particular $B^{\mathbb{Z}^2}(y,t)$ intersects  $\partial \overline{\BH}$.
By the event in \eqref{eq:goodballs} it is obvious that $w > \mathbf{d}t(1-t^{-0.1})$, because otherwise the point $y$ would be in the interior of the ball $B^{\BZ^2}((v,0),t)$.
If $w > 2\mathbf{d}t$, then by the event in \eqref{eq:goodballs_2} the set $B^{\mathbb{Z}^2}(y,t)$ does not intersect $\partial \overline{\BH}$, which gives the contradiction.
Therefore, assume that  $w \leq 2\mathbf{d}t$.
If $w \geq \mathbf{d}t(1+t^{-0.1})$, then by the event in \eqref{eq:goodballs} again the set $B^{\mathbb{Z}^2}(y,t)$ does not intersect $\partial \overline{\BH}$, which again yields the contradiction. 
This finished the proof.


\end{proof}

\begin{remark}\label{rem:Ahlberg}
	Note that by the recent results of Ahlberg \cite{ahlberg2014temporal}, one can relax the condition \eqref{eq:kestenreq} in the undirected case of Theorem \ref{thm:finite_heights} and in Theorem \ref{thm:shape}.
	Namely it is sufficient to assume that 
	$\e[\omega(e)^{d+1+\epsilon}]< \infty$ for any $\epsilon > 0$.
\end{remark}

\begin{remark}\label{rem:shape}
As for the shape of $\bigcup_x T(x,t)$, the proof of  Theorem \ref{thm:shape} trivially generalizes to  higher dimensions, that is $G= \BZ^d$.
More precisely, there is some $\mathbf{d} > 0$, such that the set 
\[
\left(\partial^{in}\bigcup_{x\in\BZ^d}\overline{T}(x,t)\right)\cap\left([-t,t]^d\times\BZ_+\right) 
\]
is contained in 
\[
[-t,t]^d\times t[\mathbf{d}-2t^{-1/(2d+5)},\mathbf{d}+2t^{-1/(2d+5)}],
\]
with probability of at least $1 - e^{-ct^{1/5}}$, for some $c>0$.
\end{remark}

\section*{Acknowledgments}
We would like to thank Gideon Amir for pointing out the work of Deijfen and H{\"a}ggstr{\"o}m \cite{deijfen2007two}, and Yuval Peres for suggesting to use the mass transport principle which strengthened our results.
We would also like to thank Kenneth Alexander and Christopher Hoffman for useful discussions.
\bibliography{IDLAT}

\begin{thebibliography}{10}

\bibitem{ahlberg2013convergence}
Daniel Ahlberg.
\newblock Convergence towards an asymptotic shape in first-passage percolation
  on cone-like subgraphs of the integer lattice.
\newblock {\em Journal of Theoretical Probability}, pages 1--25, 2013.

\bibitem{ahlberg2014temporal}
Daniel Ahlberg.
\newblock A temporal perspective on the rate of convergence in first-passage
  percolation under a moment condition.
\newblock {\em arXiv preprint arXiv:1408.0833}, 2014.

\bibitem{Benjamini_et_al-99}
I.~Benjamini, R.~Lyons, Y.~Peres, and O.~Schramm.
\newblock Group-invariant percolation on graphs.
\newblock {\em Geom. Funct. Anal.}, 9(1):29--66, 1999.

\bibitem{berger2012stretch}
Noam Berger, Jacob~J Kagan, and Eviatar~B Procaccia.
\newblock Stretch idla.
\newblock {\em arXiv preprint arXiv:1209.3112}, 2012.

\bibitem{deijfen2007two}
Maria Deijfen and Olle H{\"a}ggstr{\"o}m.
\newblock The two-type richardson model with unbounded initial configurations.
\newblock {\em The Annals of Applied Probability}, pages 1639--1656, 2007.

\bibitem{eden}
Murray Eden.
\newblock A two-dimensional growth process.
\newblock In {\em Proc. 4th {B}erkeley {S}ympos. {M}ath. {S}tatist. and
  {P}rob., {V}ol. {IV}}, pages 223--239. Univ. California Press, Berkeley,
  Calif., 1961.

\bibitem{kesten1986aspects}
H.~Kesten.
\newblock {Aspects of first passage percolation}.
\newblock {\em Lecture Notes in Math}, 1180:125--264, 1986.

\bibitem{kesten1993speed}
H.~Kesten.
\newblock On the speed of convergence in first-passage percolation.
\newblock {\em The Annals of Applied Probability}, pages 296--338, 1993.

\bibitem{idla}
Gregory~F. Lawler, Maury Bramson, and David Griffeath.
\newblock Internal diffusion limited aggregation.
\newblock {\em Ann. Probab.}, 20(4):2117--2140, 1992.

\bibitem{richardson1973random}
Daniel Richardson.
\newblock Random growth in a tessellation.
\newblock In {\em Proc. Cambridge Philos. Soc}, volume~74, pages 515--528.
  Cambridge Univ Press, 1973.

\bibitem{Lyonswith}
{\rm R. Lyons with Y. Peres}.
\newblock {\em Probability on Trees and Networks}.
\newblock 2015.

\end{thebibliography}
\bibliographystyle{plain}

\end{document}